\documentclass[11pt]{article}

\usepackage{amsmath, amsthm, amssymb, tikz, hyperref, enumerate,array}
\usepackage{amsfonts}
\usepackage{fullpage}
\usepackage[enableskew,vcentermath]{youngtab}
\usepackage{tikz-cd}
\usetikzlibrary{decorations.pathreplacing,patterns}

\newcommand\beq{\begin{equation}}
\newcommand\eeq{\end{equation}}
\newcommand\bce{\begin{center}}
\newcommand\ece{\end{center}}
\newcommand\bea{\begin{eqnarray}}
\newcommand\eea{\end{eqnarray}}
\newcommand\ba{\begin{array}}
\newcommand\ea{\end{array}}
\newcommand\ben{\begin{enumerate}}
\newcommand\een{\end{enumerate}}
\newcommand\bit{\begin{itemize}}
\newcommand\eit{\end{itemize}}
\newcommand\brr{\begin{array}}
\newcommand\err{\end{array}}
\newcommand\bt{\begin{tabular}}
\newcommand\et{\end{tabular}}

\newcommand\ul{\underline}

\newcommand\s{{\lambda/\mu}}

\renewcommand\S{{\mathcal S}}
\newcommand\U{{\mathcal U}}
\newcommand\AAA{{\mathcal A}}
\newcommand\PPP{{\mathcal P}}
\newcommand\QQQ{{\mathcal Q}}

\DeclareMathOperator\SYT{SYT}

\DeclareMathOperator\rev{rev}
\DeclareMathOperator\inte{int}

\newcommand\x{{\mathbf x}}
\newcommand\ten{10}

\newcommand\blank{$ $}
\newcommand{\rone}{\color{red}{\mathbf 1}}
\newcommand{\rtwo}{\color{red}{\mathbf 2}}
\newcommand{\rthree}{\color{red}{\mathbf 3}}
\newcommand{\rfour}{\color{red}{\mathbf 4}}
\newcommand{\rfive}{\color{red}{\mathbf 5}}
\newcommand{\rsix}{\color{red}{\mathbf 6}}
\newcommand{\rseven}{\color{red}{\mathbf 7}}
\newcommand{\reight}{\color{red}{\mathbf 8}}
\newcommand{\rnine}{\color{red}{\mathbf 9}}

\DeclareMathOperator\coeff{coeff}

\newcommand\ele{\varepsilon}

\newcommand{\bbz}{\mathbb{Z}}

\newcommand{\bij}{\phi}

\DeclareMathOperator\Des{Des}
\DeclareMathOperator\des{des}
\DeclareMathOperator\cDes{cDes}
\DeclareMathOperator\cdes{cdes}

\newcommand{\Jdt}{\operatorname{jdt}}
\newcommand{\jdt}{\widetilde{\rm{jdt}}}

\DeclareMathOperator\Peak{\Lambda} 
\DeclareMathOperator\cPeak{c\Lambda}
\newcommand\cPeakA{\cPeak_\AAA}
\newcommand\cPeakP{\cPeak_\PPP}
\newcommand\cPeakm{\cPeak_{\PPP,m}}

\newcommand{\NN}{\mathbb{N}}
\newcommand{\ZZ}{\mathbb{Z}}

\newcommand{\card}[1]{{\lvert #1 \rvert}}   

\newcommand{\TTT}{{\mathcal{T}}}
\newcommand{\hTTT}{{\hat{\TTT}}}

\newcommand{\multiset}[1]{\{\!\{#1\}\!\}}

\newlength{\mysizetiny}
\setlength{\mysizetiny}{0.3em}
\newlength{\mysizesmall}
\setlength{\mysizesmall}{0.8em}
\newlength{\mysize}
\setlength{\mysize}{1.3em}
\newlength{\mysizelarge}
\setlength{\mysizelarge}{2em}

\theoremstyle{plain}
\newtheorem{theorem}{Theorem}[section]
\newtheorem{proposition}[theorem]{Proposition}
\newtheorem{lemma}[theorem]{Lemma}
\newtheorem{corollary}[theorem]{Corollary}

\newtheorem{problem}[theorem]{Problem}

\theoremstyle{definition}
\newtheorem{definition}[theorem]{Definition}
\newtheorem*{example}{Example}

\theoremstyle{remark}

\newtheorem{remark}[theorem]{Remark}

\numberwithin{figure}{section}

\title{Cyclic descents for near-hook and two-row shapes}

\author{Ron M.\ Adin~\thanks{Department of Mathematics, Bar-Ilan University, Ramat-Gan 52900, Israel. E-mail: {\tt radin@math.biu.ac.il}} 
\and
Sergi Elizalde~\thanks{Department of Mathematics, 6188 Kemeny Hall, Dartmouth College, Hanover, NH 03755, USA. E-mail: {\tt sergi.elizalde@dartmouth.edu}. 
} 
\and 
Yuval Roichman~\thanks{Department of Mathematics, Bar-Ilan University, Ramat-Gan 52900, Israel. E-mail: {\tt yuvalr@math.biu.ac.il}.}}

\date{}

\begin{document}

\maketitle

\begin{abstract}
A notion of cyclic descents on standard Young tableaux (SYT) of rectangular shape
was introduced by Rhoades, and extended to certain 
skew shapes by the last two authors.
The cyclic descent set restricts to the usual descent set when the largest value is ignored, and has the property that
the number of SYT of a given shape with a given cyclic descent set $D$ is invariant under cyclic shifts of the entries of $D$. 
Following these results, the existence of cyclic descent sets for standard Young tableaux of any skew shape other than a ribbon was conjectured by the authors, 
and recently proved by Adin, Reiner and Roichman.
Unfortunately, the proof does not provide a natural definition of the cyclic descent set for a specific tableau. 

In this paper we explicitly describe cyclic descent sets and resulting generating functions for SYT of (possibly skew) shapes which either have exactly two rows or are near-hooks, i.e., are one cell away from a hook.
Our definition provides a constructive combinatorial proof of the existence of cyclic descent sets for these shapes, and coincides with that of Rhoades for two-row rectangular shapes.
We also show that cyclic descent sets for near-hook shaped tableaux are unique. 
\end{abstract}


Keywords: cyclic descent; standard Young tableau; cyclic action; near-hook; lattice path.

\tableofcontents

\section{Introduction}\label{sec:intro}

Let $[n]=\{1,2,\dots,n\}$ and let $\S_n$ denote the symmetric group on $[n]$. Recall that the {\em descent set} of a permutation $\pi\in\S_n$ is
\[
\Des(\pi) := \{i \,:\, 1\le i\le n-1,\, \pi(i)>\pi(i+1)\}.
\]
The {\em cyclic descent set} of a permutation was introduced by
Cellini~\cite{Cellini} and further studied in~\cite{Petersen,
Dilks}. 
It is defined, for $\pi\in\S_n$, by
\[
\cDes(\pi) := \begin{cases}
\Des(\pi) \sqcup \{n\}, &  \text{if } \pi(n)> \pi(1),\\
\Des(\pi), & \text{otherwise}.
\end{cases}
\]
Clearly, $\cDes(\pi)\cap[n-1]=\Des(\pi)$.
Moreover, for any $D \subseteq [n]$, letting $1+D$ be the subset of $[n]$ obtained from $D$ by adding $1 \pmod n$ to each element, the number of permutations in $\S_n$ with cyclic descent set $D$ is equal to the number of permutations with cyclic descent $1+D$. Equivalently, the multiset $\multiset{\cDes(\pi):\pi\in\S_n}$ is closed under cyclic rotation modulo $n$. Throughout this paper, double curly braces will be used to denote multisets.

\medskip

Another important family of combinatorial objects for which there
is a well-studied notion of descent set are standard Young
tableaux (SYT). 
Let $\lambda/\mu$ denote a skew shape, where $\lambda$ and $\mu$ are partitions such that
the Young diagram of $\mu$ is contained in that of $\lambda$.
Let $\SYT(\lambda/\mu)$ denote the set of standard Young tableaux of
shape $\lambda/\mu$. For a straight shape $\lambda=(\lambda_1,\lambda_2,\dots)$, we will write $\SYT(\lambda_1,\lambda_2,\dots)$ instead of $\SYT((\lambda_1,\lambda_2,\dots))$ for simplicity.
We draw tableaux in English notation, as in Figure~\ref{fig:SYT}.
The {\em descent set} of $T \in \SYT(\lambda/\mu)$ is
\[
\Des(T) := \{i\in[n-1] \,:\, i+1 \text{ is in a lower row than $i$ in $T$}\}.
\]
For example, the descent set of the SYT in Figure~\ref{fig:SYT} is $\{1,4,7,8\}$.

\begin{figure}[htb]
$$\young(::147,2368,59)$$
\caption{A SYT of shape $(5,4,2)/(2)$.} 
\label{fig:SYT}
\end{figure}

It is natural to ask whether an appropriate notion of {\em cyclic descent set} exists for $\SYT(\lambda/\mu)$. 
When $\lambda/\mu$ is a straight rectangular shape, such a notion was introduced by Rhoades~\cite{Rhoades} (who called it {\em extended descent set}); see also~\cite{PS}. 
The following theorem reformulates~\cite[Lemma 3.3]{Rhoades}. 
Let $2^{[n]}$ denote the collection of all subsets of $[n]$, and let $m^{n/m}$ denote the Young diagram of rectangular shape with $n/m$ rows and $m$ columns.

\begin{theorem}[\cite{Rhoades}]\label{Rhoades_thm}
Let $m$ be a divisor of $n$. There exists a map $\cDes:\SYT(m^{n/m})\to 2^{[n]}$
such that
\begin{enumerate}[(i)]
\item 
$\cDes(T)\cap[n-1]=\Des(T)$ for every $T\in\SYT(m^{n/m})$;
\item 
the multiset $\multiset{\cDes(T): T\in \SYT(m^{n/m})}$ is closed under
cyclic rotation modulo $n$.
\end{enumerate}
\end{theorem}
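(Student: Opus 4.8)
The plan is to build $\cDes$ out of the \emph{promotion} operator and to exploit the fact that promotion acts with order $n$ on rectangular tableaux. Write $\operatorname{pr}\colon\SYT(m^{n/m})\to\SYT(m^{n/m})$ for Sch\"utzenberger promotion, a bijection, and recall the theorem (due to Haiman) that $\operatorname{pr}^{\,n}=\mathrm{id}$ on any rectangular shape with $n$ cells. The one structural input I need about descents is the standard interior-shift relation
\[
i\in\Des(\operatorname{pr}T)\iff i+1\in\Des(T)\qquad(1\le i\le n-2),
\]
which I will call $(\star)$; it holds for arbitrary shapes and follows directly from the jeu-de-taquin description of promotion, since deleting the entry $1$, sliding, and decrementing moves every interior descent down by one. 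Note that $(\star)$ says nothing about the boundary positions $1\in\Des(T)$ and $n-1\in\Des(\operatorname{pr}T)$; tracking how these wrap around is exactly where rectangularity enters.

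With these in hand I would simply \emph{define}, for $i\in[n]$,
\[
i\in\cDes(T)\quad:\Longleftrightarrow\quad 1\in\Des\bigl(\operatorname{pr}^{\,i-1}(T)\bigr),
\]
where the exponent is read modulo $n$; this is well defined precisely because $\operatorname{pr}^{\,n}=\mathrm{id}$. The point of the definition is that the cyclic-rotation property becomes automatic: since $\operatorname{pr}^{\,i-1}\!\circ\operatorname{pr}=\operatorname{pr}^{\,i}$, one reads off at once that
\[
i\in\cDes(\operatorname{pr}T)\iff 1\in\Des(\operatorname{pr}^{\,i}T)\iff i+1\in\cDes(T)\pmod n,
\]
so that $\cDes(\operatorname{pr}T)=\cDes(T)-1$ as subsets of $\ZZ/n\ZZ$. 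The only place $\operatorname{pr}^{\,n}=\mathrm{id}$ is genuinely used is to close this identity at $i=n$, where it forces $1\in\Des(\operatorname{pr}^{\,n}T)=1\in\Des(T)$.

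To verify property (i) I would iterate $(\star)$. Starting from $1\in\Des(\operatorname{pr}^{\,k}T)$ and peeling off one promotion at a time, $(\star)$ converts this successively into $2\in\Des(\operatorname{pr}^{\,k-1}T)$, then $3\in\Des(\operatorname{pr}^{\,k-2}T)$, and so on, yielding
\[
1\in\Des(\operatorname{pr}^{\,k}T)\iff k+1\in\Des(T)\qquad(0\le k\le n-2),
\]
where the constraint $k\le n-2$ keeps every index invoked within the interior range of $(\star)$. Taking $k=i-1$ gives $i\in\cDes(T)\iff i\in\Des(T)$ for all $i\in[n-1]$, i.e. $\cDes(T)\cap[n-1]=\Des(T)$; the remaining value $i=n$ contributes the genuinely extra cyclic descent and is unconstrained by (i).

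Finally, property (ii) follows from the equivariance above: because $\operatorname{pr}$ is a bijection and $\cDes(\operatorname{pr}T)=\cDes(T)-1$, the map $\operatorname{pr}^{-1}$ restricts to a bijection from $\{T:\cDes(T)=D\}$ onto $\{T:\cDes(T)=D+1\}$ for every $D\subseteq[n]$, so the multiset $\multiset{\cDes(T):T\in\SYT(m^{n/m})}$ is invariant under $D\mapsto D+1\pmod n$. The routine parts are the iteration of $(\star)$ and the bookkeeping modulo $n$; the main obstacle is the external input that promotion has order exactly $n$ on rectangular shapes, without which neither the mod-$n$ definition nor the wrap-around at $i=n$ would be legitimate. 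It would then remain to check that this $\cDes$ coincides with Rhoades' extended descent set, which amounts to matching the single boundary rule $n\in\cDes(T)\iff 1\in\Des(\operatorname{pr}^{\,n-1}T)$ against his definition.
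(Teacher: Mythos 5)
Your construction is correct and is essentially the route the paper takes: the paper quotes this result from Rhoades rather than reproving it, and your definition unwinds (via iterating $(\star)$ with $k=n-1$) to exactly the rule $n\in\cDes(T)\iff n-1\in\Des(pT)$ stated after Theorem~\ref{Rhoades_thm} and in Theorem~\ref{Rhoades_thm2}, with the same two external inputs (the interior descent-shift under promotion and Haiman's theorem that promotion has order dividing $n$ on rectangles). The final "remaining check" you mention is not actually needed, since the statement only asserts existence of a map with properties (i) and (ii), which your argument establishes.
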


Rhoades' definition of $\cDes$ on $\SYT(m^{n/m})$ states that $n \in \cDes(T)$ if and only if $n-1$ is a descent of the SYT obtained from $T$ by Sch\"utzenberger promotion. 

An analogous phenomenon for certain disconnected shapes has recently been discovered in~\cite{ER16}. 
For a partition $\lambda$ of $n-1$, let $\lambda^\Box$ be the skew shape obtained from the Young diagram
of shape $\lambda$ by placing a disconnected cell at its upper right corner. 

\begin{theorem}[\cite{ER16}]\label{ER_thm}
Let $\lambda$ be a partition of $n-1$. There exists a map $\cDes:\SYT(\lambda^\Box)\to 2^{[n]}$ such that
\begin{enumerate}[(i)]
\item $\cDes(T)\cap[n-1]=\Des(T)$ for every $T\in\SYT(\lambda^\Box)$;
\item the multiset $\multiset{\cDes(T): T\in \SYT(\lambda^\Box)}$ is closed under cyclic rotation modulo $n$.
\end{enumerate}
\end{theorem}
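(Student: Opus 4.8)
The plan is to reduce the statement to a transparent parametrization of $\SYT(\lambda^\Box)$ and then to transport the cyclic structure through an explicit rotation on it. First I would record the bijection $\SYT(\lambda^\Box)\longleftrightarrow\SYT(\lambda)\times[n]$ sending a tableau $T$ to the pair $(S,c)$, where $c\in[n]$ is the entry of the isolated cell and $S\in\SYT(\lambda)$ is the order-preserving relabeling of the remaining $n-1$ entries. Since the isolated cell lies in the top row, disconnected from the diagram of $\lambda$ below it, two entries of the descent set are forced: $c\in\Des(T)$ whenever $c\le n-1$ (because $c+1$ sits in a lower row), while $c-1\notin\Des(T)$ whenever $c\ge2$ (because $c$ sits in the topmost row). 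The remaining descents of $T$ are exactly those of $S$, distributed on the two sides of $c$: for $i<c-1$ one has $i\in\Des(T)\iff i\in\Des(S)$, and for $i>c$ one has $i\in\Des(T)\iff i-1\in\Des(S)$. This explicit formula is the foundation for all later computations.

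Next I would define $\cDes(T)\subseteq[n]$ by specifying exactly when $n\in\cDes(T)$, while keeping $\cDes(T)\cap[n-1]=\Des(T)$, so that property (i) holds by construction. The guiding idea, mirroring the permutation case, is to view $\Des(T)$ as a composition of $n$ and to close it into a cyclic composition modulo $n$: including $n$ leaves the parts as they are, whereas omitting $n$ merges the first and last parts across the wrap from $n$ back to $1$. As a consistency check on which completion the multiset demands, I would use the identity $\sum_T F_{n,\Des(T)}=s_{\lambda^\Box}$, where $F$ is Gessel's fundamental quasisymmetric function; because $\lambda^\Box$ is disconnected this factors as $s_\lambda\,s_1$, and by Pieri's rule it equals $\sum_\mu s_\mu$ summed over the partitions $\mu$ obtained from $\lambda$ by adding a single box, which pins down the multiset of descent compositions that has to be cyclically completed.

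For property (ii) I would construct an explicit bijection $\Phi\colon\SYT(\lambda^\Box)\to\SYT(\lambda^\Box)$ satisfying $\cDes(\Phi T)=1+\cDes(T)$ for every $T$; iterating $\Phi$ then shows that the multiset $\multiset{\cDes(T):T\in\SYT(\lambda^\Box)}$ is invariant under the full cyclic group, which is exactly (ii). In terms of $(S,c)$, the map $\Phi$ should advance the distinguished cut at $c$ by one and repair the relabeling $S$ accordingly. The step I expect to be the main obstacle is precisely this repair at the wrap-around, when the advancing cut passes from $n$ back to $1$: here $\Phi$ cannot merely increment $c$, and small cases show that the obvious candidates fail—Sch\"utzenberger promotion does not shift $\cDes$ by a constant on these shapes, and no rule depending only on the rows of the extreme entries $1$ and $n$ is correct. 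Consequently $\Phi$ must genuinely modify the $\lambda$-part $S$, and the heart of the argument is to define this modification and to verify both that $\Phi$ is a bijection and that it advances $\cDes$ by one. The uniqueness of the completion already visible in small cases (for instance $(2,1)^\Box$) serves as a useful guide and consistency check in carrying this out.
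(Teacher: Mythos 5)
Your setup is sound: the parametrization of $\SYT(\lambda^\Box)$ by pairs $(S,c)$ with $S\in\SYT(\lambda)$ and $c\in[n]$ the entry of the isolated cell is correct, and so is your description of $\Des(T)$ in terms of $(S,c)$. But what you have written is a plan rather than a proof, and the gap sits exactly where you locate it yourself: you never define the rule deciding whether $n\in\cDes(T)$, and you never define the rotation $\Phi$. You observe that the naive candidates fail, that $\Phi$ must genuinely modify the $\lambda$-part $S$ at the wrap-around, and then state that ``the heart of the argument is to define this modification and to verify'' its properties --- which is precisely the content of the theorem. The quasisymmetric identity $\sum_T F_{n,\Des(T)}=s_\lambda s_1=\sum_\mu s_\mu$ constrains the fiber sizes a cyclic descent map would have to have, but it does not produce the map, and the theorem asserts the existence of an actual map $\cDes$ on tableaux (together, implicitly, with the equivariant bijection realizing property (ii)).

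For comparison, the construction in the cited source (restated in the paper as Theorem~\ref{thm:cDes_lambda_box}) resolves the wrap-around by a global shift-and-straighten device rather than a local repair of $S$: with $d$ the entry of the disconnected cell, one declares $n\in\cDes(T)$ if and only if $n$ occupies the disconnected cell or $n-d\in\Des(\jdt(-d+T))$, where $\jdt$ is the generalized jeu-de-taquin straightening of the (generally non-standard) array $-d+T$; the rotation is then $\bij T=\jdt(1+d+\jdt(-d+T))$, and the equivariance $\cDes(\bij T)=1+\cDes(T)$ is what must be (and is, in \cite{ER16}) verified. Your $(S,c)$ bookkeeping is compatible with this --- shifting by $-d$ moves the distinguished entry to $n$ and the straightening is exactly the ``repair'' you anticipate --- but until you specify that operation and prove it bijective and $\cDes$-shifting, the argument is incomplete.
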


The definition of the cyclic descent set in this case involves a
{\it jeu-de-taquin}-type straightening algorithm~\cite[Definition 7.1]{ER16}; see Section~\ref{sec:prel2} below.

\medskip

Motivated by Theorems~\ref{Rhoades_thm} and~\ref{ER_thm} and by the properties of cyclic descent sets of permutations, we define the key concept of this paper.

\begin{definition}\label{def:extension}
For a (possibly skew) Young diagram $\lambda/\mu$ with $n$ cells, a {\em cyclic descent
map for $\lambda/\mu$} is a map $\cDes:\SYT(\lambda/\mu)\to 2^{[n]}$  such that
\begin{enumerate}[(i)]
\item  
$\cDes(T)\cap[n-1]=\Des(T)$ for every $T\in\SYT(\lambda/\mu)$; and
\item 
the multiset $\multiset{\cDes(T) \,:\, T \in \SYT(\lambda/\mu)}$ is closed under cyclic rotation modulo $n$.
\end{enumerate}
\end{definition}

For a given cyclic descent map $\cDes$ for $\lambda/\mu$
and a tableau $T\in\SYT(\lambda/\mu)$, denote by $\cDes(T)$ the {\em cyclic descent set} of $T$, and call its elements {\em cyclic descents}.

\begin{example}
A cyclic descent map for $\lambda =(3,2)$ is given by
\[
\cDes\left(\young(134,25)\right)=\{1,4\}, \ \ \
\cDes\left(\young(125,34)\right)=\{2,5\}, \ \ \
\cDes\left(\young(135,24)\right)=\{3,1\},
\]
\[
\cDes\left(\young(124,35)\right)=\{4,2\}, \ \ \
\cDes\left(\young(123,45)\right)=\{5,3\}.
\]
\end{example}

Note that, by condition (i) in Definition~\ref{def:extension}, a cyclic descent 
map for $\lambda/\mu$ is uniquely determined by specifying, for each tableau $T \in \SYT(\lambda/\mu)$, whether or not $n$ is a cyclic descent of $T$. Theorems~\ref{Rhoades_thm} and~\ref{ER_thm} above state that there exist cyclic descent 
maps for the shapes $(m^{n/m})$ and $\lambda^\Box$, respectively.

Recall that {\em (connected) ribbons} are 
connected skew shapes containing no $2 \times 2$ rectangle. 
The following result is relatively straightforward, and follows from the proof of~\cite[Theorem 1.1]{ARR}, restated as Theorem~\ref{conj1} below.

\begin{proposition}\label{prop:ribbons}
If $\gamma$ is a connected ribbon with $n$ cells and height $1<h<n$,
then there is no cyclic descent map for $\gamma$ as in Definition~\ref{def:extension}.
\end{proposition}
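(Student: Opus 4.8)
The plan is to translate condition (ii) of Definition~\ref{def:extension} into a system of linear equations over the nonnegative integers and exhibit an explicit infeasibility certificate. For $S\subseteq[n-1]$ write $g_S:=\#\{T\in\SYT(\gamma):\Des(T)=S\}$. By condition (i) a cyclic descent map amounts to choosing, for each $T$, whether $n\in\cDes(T)$; by condition (ii) the resulting multiset $\multiset{\cDes(T):T\in\SYT(\gamma)}$ is rotation-invariant, i.e.\ within each orbit of the rotation action on $2^{[n]}$ all sets occur with one common multiplicity. Denoting that multiplicity by $\kappa_O\ge 0$ for each orbit $O$ and splitting the $g_S$ tableaux of descent set $S$ into upgraded (with $n$) and non-upgraded ones, I get that a cyclic descent map exists if and only if there are nonnegative integers $(\kappa_O)$ with
\[
g_S=\kappa_{\operatorname{orb}(S)}+\kappa_{\operatorname{orb}(S\cup\{n\})}\qquad(S\subseteq[n-1]),
\]
where $\operatorname{orb}(\cdot)$ denotes the rotation orbit.

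The key idea is to test this system only on the nested chain of cyclic intervals. Put $g_j:=g_{\{1,\dots,j\}}$ and $k_j:=\kappa_{\operatorname{orb}(\{1,\dots,j\})}$ for $0\le j\le n$. The crucial observation is that adjoining $n$ to an initial interval and rotating by one returns an initial interval: $\{1,\dots,j\}\cup\{n\}$ is a cyclic rotation of $\{1,\dots,j+1\}$, so $\operatorname{orb}(\{1,\dots,j\}\cup\{n\})=\operatorname{orb}(\{1,\dots,j+1\})$. Hence the equations above, restricted to the sets $S=\{1,\dots,j\}$, collapse to the single recurrence $g_j=k_j+k_{j+1}$ for $0\le j\le n-1$ (with $k_0=\kappa_{\{\emptyset\}}$ and $k_n=\kappa_{\{[n]\}}$).

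Next I would feed in two structural facts about ribbons with $1<h<n$: no $T$ has $\Des(T)=\emptyset$ (the overlap column shared by rows $1,2$ forces an upper entry to lie below a larger lower one, so the value-to-row function cannot be weakly decreasing — this uses $h\ge 2$), and none has $\Des(T)=[n-1]$ (that would require $n$ distinct rows, using $h\le n-1$); that is, $g_0=g_{n-1}=0$. From $g_0=k_0+k_1=0$ and nonnegativity I get $k_0=k_1=0$, and solving $k_{j+1}=g_j-k_j$ upward yields the alternating expression $k_{n-1}=(-1)^{n}\sum_{i=0}^{n-1}(-1)^i g_i$. On the other hand $g_{n-1}=k_{n-1}+k_n=0$ forces $k_{n-1}=0$. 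Thus the system is feasible only if the alternating sum $A:=\sum_{i=0}^{n-1}(-1)^i g_i$ vanishes, and a contradiction follows as soon as $A\ne 0$.

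The heart of the argument, and the step I expect to be the main obstacle, is the evaluation $A=(-1)^{h-1}\ne 0$. Conceptually this is the value of the (skew) character $\chi^{\gamma}$ on an $n$-cycle, which by the Murnaghan--Nakayama rule equals $(-1)^{h-1}$ precisely because $\gamma$ is a connected ribbon; concretely $A$ reads off the coefficients of the initial-interval fundamental quasisymmetric functions in $s_\gamma=\sum_S g_S F_S$ with alternating signs. Elementarily, I would instead prove three combinatorial lemmas: $g_i=0$ for $i\ge h$ (a descent set $\{1,\dots,i\}$ forces $i+1$ strictly increasing rows, so $i+1\le h$); $g_i=0$ for $i\le h-2$ (here the column constraints of the ribbon obstruct such short initial-interval descent sets); and $g_{h-1}=1$, realized by the unique filling in which $1,\dots,h$ descend one per row. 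These give $A=(-1)^{h-1}g_{h-1}=(-1)^{h-1}$ directly, completing the contradiction. Establishing the two vanishing/uniqueness statements for $g_i$ is where the real work lies, since it is exactly the control of how a ribbon's overlap columns constrain its descent-set statistics; the reformulation and telescoping above are routine once these counts are in hand.
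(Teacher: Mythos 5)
Your argument is correct, and it is genuinely more self-contained than what the paper offers: the paper does not prove Proposition~\ref{prop:ribbons} directly but defers to the proof of \cite[Theorem 1.1]{ARR}, where the obstruction is packaged symmetric-function-theoretically (nonnegativity of the forced fiber sizes of $\cDes$, which fails for ribbons essentially because $\langle s_\gamma, p_n\rangle=(-1)^{h-1}$). Your reduction to the system $g_S=\kappa_{\operatorname{orb}(S)}+\kappa_{\operatorname{orb}(S\cup\{n\})}$ is the same obstruction in elementary linear-algebraic clothing, and the telescoping along initial intervals, using $\operatorname{orb}(\{1,\dots,j\}\cup\{n\})=\operatorname{orb}(\{1,\dots,j+1\})$, is a clean way to extract the single scalar identity $\sum_i(-1)^ig_i=0$ that feasibility would force. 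The two inputs $g_0=g_{n-1}=0$ use exactly the hypotheses $h\ge 2$ and $h\le n-1$, and correctly avoid any appeal to the non-Escher axiom (which Definition~\ref{def:extension} does not include).

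The deferred lemmas you flag as the main obstacle do hold, and your sketch is the right one. For a connected ribbon, rows $r$ and $r+1$ overlap in the single column that is leftmost in row $r$ and rightmost in row $r+1$, so $\min(\text{row }r)<\max(\text{row }r+1)$ for all $r$. If $\Des(T)=\{1,\dots,i\}$, the entries $1,\dots,i+1$ occupy strictly increasing rows (giving $i\le h-1$) and $i+1,\dots,n$ occupy weakly decreasing rows, so the latter partition into blocks of consecutive integers, one per row, with lower rows receiving smaller blocks; the bottom row must contain $i+1$, and if some row $r<h$ contained no entry of $\{1,\dots,i+1\}$ then $\min(\text{row }r)$ would exceed $\max(\text{row }r+1)$, violating the overlap constraint. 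Hence every row contains one of $1,\dots,i+1$, forcing $i=h-1$ and pinning down the unique tableau, so $g_{h-1}=1$ and $g_i=0$ otherwise, giving $A=(-1)^{h-1}\ne 0$ as claimed. (Alternatively, $g_j=\langle s_\gamma,s_{(n-j,1^j)}\rangle$ via descent-preserving jeu de taquin, and $A=\langle s_\gamma,p_n\rangle=(-1)^{h-1}$ by Murnaghan--Nakayama.) What your route buys is a proof readable without toric Schur functions; what the paper's citation buys is the much stronger uniqueness-of-fiber-sizes statement of Theorem~\ref{conj1}, of which this nonexistence is a byproduct.
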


The restriction $1 < h < n$ in this statement is due to a slight difference (concerning the ``non-Escher property'') between Definition~\ref{def:extension} above and the corresponding definition in~\cite{ARR} (reproduced as Definition~\ref{def:cDes} below).

In an early version of this paper we conjectured that all skew shapes, with the exception of connected ribbons,
have a cyclic descent map. This conjecture has recently been proved in~\cite{ARR},
using nonnegativity properties of Postnikov's toric Schur polynomials. 
The proof relies on an interpretation of the fiber sizes of the cyclic descent map as coefficients in the expansion of a certain toric Schur function in the Schur basis; equivalently, on a new combinatorial interpretation of certain Gromov--Witten invariants as fiber sizes of the cyclic descent map.
Unfortunately, the proof does not provide a natural definition of the cyclic descent map. 

The current paper is intimately related to~\cite{ARR}.
While \cite{ARR} presents a general  algebraic point of view, the approach in the current paper is more combinatorial.
The main goal of this paper is to explicitly describe cyclic descent maps for certain families of shapes:
near-hooks, i.e., shapes which are one cell away from hooks, 
and two-row shapes, both straight and skew.
In the special case of two-row rectangular shapes, our definition coincides with that of Rhoades~\cite{Rhoades}.
Our description provides a constructive combinatorial proof of the existence of cyclic descent extensions for these shapes;
for near-hooks we also prove that the cyclic descent map is unique.
Two additional goals of the paper are the 
explicit description of a $\bbz$-action on the tableaux of each shape that cyclically rotates their cyclic descent sets, 
and explicit formulas (generating functions) for the distribution of cyclic descents.

\medskip

Let us give an overview of the rest of the paper, according to the three goals outlined above.
We start with the preliminary Section~\ref{sec:Prel}, introducing some necessary background regarding {\em jeu de taquin} and its generalizations, which play an important role in the description of certain cyclic descent maps and $\bbz$-actions. 

Cyclic descent maps are explicitly described for various shapes: 
near-hooks, which are hooks plus or minus one cell (Section~\ref{sec:uniqueness});
strips, which generalize one type of near-hooks, that of hooks minus the corner cell (Section~\ref{sec:strips}); 
straight two-row shapes (Section~\ref{sec:2rows});
and skew two-row shapes, for which we present two different cyclic descent maps
(Section~\ref{sec:skew}).
As an appetizer, we present two of the results,
one from Section~\ref{sec:uniqueness} (a special case of Corollary~\ref{t.n_in_cDes})
and one from Section~\ref{sec:2rows}.
Here $T_{i,j}$ denotes the entry in row $i$ (from the top) and column $j$ (from the left) of $T$.

\begin{theorem}\label{thm:hooks_plus1_intro}
For every $2 \le k \le n-2$, the shape $(n-k,2,1^{k-2})$ has a unique cyclic descent map.
Given $T \in \SYT(n-k,2,1^{k-2})$, this map is specified by letting $n \in \cDes(T)$ if and only if the entry $T_{2,2}-1$ appears in the first column of $T$.
\end{theorem}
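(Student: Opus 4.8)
The plan is to recast the stated rule as a statement about the \emph{size} of the descent set, from which existence and uniqueness will both follow. Write $\lambda=(n-k,2,1^{k-2})$, so $\lambda$ is the hook $(n-k,1^{k-1})$ with the extra cell $(2,2)$, and put $c:=T_{2,2}$. Since $c$ sits in row $2$, the value $c-1$ lies in row $1$ exactly when $c-1\in\Des(T)$, and otherwise it lies weakly below row $2$, i.e.\ in the first column; thus the rule is equivalent to $n\in\cDes(T)\iff c-1\notin\Des(T)$. To control this I would encode each $T$ by its growth word $w_2\cdots w_n\in\{\mathsf R,\mathsf D,\mathsf S\}^{n-1}$, where $w_i$ records whether $i$ is placed in the first row ($\mathsf R$), in the first column below the corner ($\mathsf D$), or in the special cell $(2,2)$ ($\mathsf S$); there are $n-k-1$ letters $\mathsf R$, exactly $k-1$ letters $\mathsf D$, and a single $\mathsf S$, preceded by at least one $\mathsf R$ and one $\mathsf D$. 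A short check on consecutive pairs shows that $i\in\Des(T)$ precisely when $w_{i+1}=\mathsf D$, or $w_{i+1}=\mathsf S$ and $w_i=\mathsf R$. Hence $\Des(T)$ is the disjoint union of the $k-1$ values $\{d-1:w_d=\mathsf D\}$ with the singleton $\{c-1\}$ when $w_{c-1}=\mathsf R$ (and $\varnothing$ otherwise), giving the crucial dichotomy $|\Des(T)|\in\{k-1,k\}$ together with
\[
n\in\cDes(T)\iff w_{c-1}=\mathsf D\iff |\Des(T)|=k-1,
\]
so that $|\cDes(T)|=k$ for every $T$. Establishing this reformulation cleanly is the key lemma and the technical heart of the argument.

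For existence I would exhibit an explicit $\ZZ$-action rotating $\cDes$, built from the generalized jeu-de-taquin operators of Section~\ref{sec:Prel}: the bijection must send the growth word of $T$ to that of a tableau whose $k$-subset $\cDes$ is shifted by $1$ modulo $n$. The delicate point, and the main obstacle, is that ordinary promotion does \emph{not} rotate $\cDes$ here (on near-hooks it can even produce orbits of the wrong length), so one must track how the operator moves the special cell $(2,2)$ and reroutes the single $\mathsf S$ past the $\mathsf R$/$\mathsf D$ letters, checking the shift separately at the two boundary indices $1$ and $n$. As a shortcut I note that, since $\lambda$ contains a $2\times2$ block, it is not a connected ribbon, so a cyclic descent map for $\lambda$ already exists by the general theorem of~\cite{ARR}; the uniqueness step below then forces that map to coincide with the stated one, yielding existence of the stated map even without constructing the action.

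Finally I would prove uniqueness directly from the dichotomy. Let $\cDes'$ be any cyclic descent map for $\lambda$; by condition (i) of Definition~\ref{def:extension} it agrees with the rule off the value $n$, so it suffices to show $n\in\cDes'(T)\iff|\Des(T)|=k-1$. If some $T$ with $|\Des(T)|=k-1$ had $n\notin\cDes'(T)$, then $\cDes'(T)$ would be a $(k-1)$-subset of $[n-1]$; shifting it cyclically so that one element lands on $n$ yields a set in its rotation orbit whose intersection with $[n-1]$ has size $k-2$, a size attained by no descent set, so that set has empty fiber and, by closure under rotation, so does $\cDes'(T)$ — contradicting that $T$ lies in its fiber. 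Symmetrically, if some $T$ with $|\Des(T)|=k$ had $n\in\cDes'(T)$, then $\cDes'(T)$ is a $(k+1)$-subset containing $n$, and since $k\le n-2$ its complement is nonempty, so some rotation of it omits $n$ and is a $(k+1)$-subset of $[n-1]$, again an unattainable descent set, giving the same contradiction. Hence $\cDes'$ equals the stated map, establishing both its uniqueness and, together with~\cite{ARR}, its existence.
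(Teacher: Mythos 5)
Your proposal is correct and follows essentially the same route as the paper: establish that $\card{\Des(T)}\in\{k-1,k\}$ with $n\in\cDes(T)$ characterized by $\card{\Des(T)}=k-1$ (the paper's Lemma~\ref{t.near-hooks}, which you rederive via an equivalent growth-word encoding), prove uniqueness by rotating a putative cyclic descent set of the wrong size onto or off of $n$ to reach an unattainable descent-set cardinality (Lemma~\ref{t.unique_cDes}), and invoke~\cite{ARR} (Theorem~\ref{conj1}) for existence. Your explicit handling of the boundary cases $k-1\ge 1$ and $k+1\le n-1$ in place of the non-Escher axiom is a minor but sound variation.
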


\begin{theorem}\label{thm:2rows_intro}
For every $2 \le k\le n/2$ there exists a cyclic descent map for the shape $(n-k,k)$. 
Given $T\in \SYT(n-k,k)$, such a map is specified by letting $n\in\cDes(T)$ if and only if both of the following conditions hold:
\begin{enumerate}
\item 
the last two entries in the second row of $T$ are consecutive, that is, $T_{2,k}=T_{2,k-1} + 1$;
\item 
for every $1<i<k$,  $T_{2,i-1} > T_{1,i}$.
\end{enumerate}
\end{theorem}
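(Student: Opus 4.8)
The plan is to establish property (ii) of Definition~\ref{def:extension} by exhibiting a single bijection $\Phi:\SYT(n-k,k)\to\SYT(n-k,k)$ with the shift property $\cDes(\Phi(T))=1+\cDes(T)\pmod n$ for every $T$. Property (i) will then hold by construction, since the stated rule only decides the membership of $n$ and leaves $\cDes(T)\cap[n-1]=\Des(T)$ intact. Once such a $\Phi$ exists, the operation $D\mapsto 1+D$ on the multiset $\multiset{\cDes(T):T\in\SYT(n-k,k)}$ is realized by $T\mapsto\Phi(T)$, so the multiset is invariant under rotation by $1$, hence under all cyclic rotations modulo $n$; this gives (ii), and $\Phi$ is also the promised explicit $\mathbb{Z}$-action.

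First I would pass to the lattice-path model: encode $T$ by the ballot word $w=w_1\cdots w_n\in\{1,2\}^n$, where $w_j$ records the row containing the entry $j$, so that $w$ has $n-k$ ones and $k$ twos and every prefix has at least as many ones as twos. In this language $i\in\Des(T)$ (for $i<n$) is exactly an occurrence of the factor $12$ at positions $i,i+1$, and I would re-read conditions (1)--(2) as statements about the positions of the last two $2$'s and about the relative order of the $i$-th $1$ and the $(i-1)$-th $2$. The reason for this translation is that the cyclic shift $w_1\cdots w_n\mapsto w_n w_1\cdots w_{n-1}$ moves every factor $12$, hence every descent, forward by one position modulo $n$; the obstacle is that this naive shift destroys the ballot property (note $w_1=1$ always), so $\Phi$ must be a shift \emph{followed by a jeu-de-taquin straightening}, i.e.\ a promotion-type operator of the kind set up in Section~\ref{sec:Prel}.

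Accordingly, I would define $\Phi$ as this operator, check that it is a well-defined bijection (the straightening keeps us inside ballot words and is reversible), and then verify the identity $\cDes\circ\Phi=(1+\,\cdot\,)\circ\cDes$ position by position. For interior positions the identity is forced by the shift, as above; the real content is concentrated at the seam, where one must establish the two equivalences $n\in\cDes(T)\iff 1\in\cDes(\Phi(T))$ and, dually, $n-1\in\Des(T)\iff n\in\cDes(\Phi(T))$. Both follow from the single shift identity, and this is exactly where conditions (1) and (2) enter: they are engineered so that $n$ is declared a cyclic descent of $T$ precisely when the straightening step creates the descent $1$ (a $12$ factor straddling the old seam) in $\Phi(T)$.

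The step I expect to be the main obstacle is this seam verification. Tracking how the straightening moves the largest entry and the final cells of the second row, and proving that the resulting reshuffling produces a descent at position $1$ \emph{exactly} under conditions (1)--(2)---with no off-by-one error and with the ballot property preserved---is delicate, and it is where the precise form of the two conditions (the consecutiveness $T_{2,k}=T_{2,k-1}+1$ and the chain of inequalities $T_{2,i-1}>T_{1,i}$) becomes indispensable. I would handle it by a case analysis on the location of $n$ (whether it occupies $(1,n-k)$ or $(2,k)$) and on the first few slides of the straightening, thereby reducing the general claim to a local check near the end of the second row.
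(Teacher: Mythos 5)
Your overall strategy---exhibit a bijection $\Phi$ with $\cDes(\Phi T)=1+\cDes(T)$ and deduce closure of the multiset under rotation---is sound in principle, and the paper does eventually construct such a bijection (Section~\ref{sec:2rows_action}). But the specific operator you propose does not work, and this is not merely a delicate seam check: it is false. In the word/path picture, the cyclic shift $w_1\cdots w_n\mapsto w_nw_1\cdots w_{n-1}$ followed by jeu-de-taquin straightening is exactly the inverse promotion operator $T\mapsto\jdt(1+T)$, and inverse promotion does \emph{not} shift the cyclic descent set of Theorem~\ref{thm:2rows_intro} for non-rectangular two-row shapes. The paper's own counterexample (stated in Section~\ref{sec:hooks_plus_action}, and applicable here since $(3,2)$ is a two-row shape and the two definitions of $\cDes$ agree on $(n-2,2)$) is $T=\young(125,34)$ with $\cDes(T)=\{2,5\}$, so $1+\cDes(T)=\{1,3\}$, whereas $\jdt(1+T)=\young(123,45)$ has $\cDes=\{3,5\}$. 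The correct shifting bijection (the paper's $\bij$, or equivalently $\rho$ on paths) requires a genuine case analysis---e.g.\ when the last two entries of the second row are consecutive one must apply $\jdt(1+T_{\le x})$ with $x=T_{2,k}$, not $\jdt(1+T)$---and writing down those cases already presupposes knowing which tableaux have $n\in\cDes(T)$, which is what you are trying to prove. (Only when $n$ is even and $k=n/2$ does plain inverse promotion work, which is Rhoades' rectangular case.)

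For contrast, the paper's actual proof of the theorem avoids a shifting bijection entirely. It transfers the problem to lattice paths in $\PPP_n^{n-2k}$ via $\Gamma$, uses the trivially rotation-equivariant map $\cPeakA$ on the full set $\AAA_n^{n-2k}$ together with a $\Peak$-preserving bijection $\varphi:\AAA_n^{n-2k+2}\to\QQQ_n^{n-2k}$, and then defines $n\in\cPeakP(P)$ on exactly enough paths (the image of an explicit injection $\psi$) to make the fiber sizes equal the difference $\card{\cPeakA^{-1}(J)\cap\AAA_n^{n-2k}}-\card{\cPeakA^{-1}(J)\cap\AAA_n^{n-2k+2}}$; rotation-invariance of the multiset then follows by subtraction, with no equivariant map on $\PPP_n^{n-2k}$ needed. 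Your proposal is missing either this sieve idea or the nontrivial corrected definition of $\Phi$; as written, the central object of your argument does not have the property you need.
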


The cyclic descent maps described in Theorem~\ref{thm:hooks_plus1_intro} and Theorem~\ref{thm:2rows_intro} coincide on the shape $(n-2,2)$.

Explicit $\bbz$-actions showing that condition (ii) in Definition~\ref{def:extension} is satisfied are described for
hooks plus one internal cell (Section~\ref{sec:hooks_plus_action});
strips, including hooks minus a corner cell (Section~\ref{sec:near-hooks_to_strips});
and straight two-row shapes (Section~\ref{sec:2rows_action}). 
No explicit descriptions of such actions are known for skew two-row shapes.

The fiber sizes of the cyclic descent map on near-hook and straight two-row shapes are described in Theorems~\ref{t.near-hook:fibers} and \ref{t.gf_cdes_2}, respectively. 
The associated generating functions are computed in Sections~\ref{sec:GF-near-hooks} (for near-hooks), \ref{sec:words} (for certain strips), and~\ref{subsec:2rows_gf} (for straight two-row shapes).

Finally, Section~\ref{sec:final} contains a summary (see Table~\ref{tab:cDes}) of the definitions of cyclic descents for the various shapes considered in this paper, and presents some open problems.

\subsection*{Acknowledgements.} 
The authors thank Connor Ahlbach, Alessandro Iraci, Oliver Pechenik, Jim Propp, Amitai Regev, Tom Roby, and Josh Swanson for useful discussions.
The first author thanks the Israel Institute for Advanced Studies for its hospitality during part of the work on this paper.
The first and third authors were partially supported by an MIT-Israel MISTI grant.
The second author was partially supported by Simons Foundation grant \#280575.

\section{Preliminaries}\label{sec:Prel}

\subsection{Jeu de taquin}\label{sec:jdt} 

An important tool in this paper is 
the {\it jeu de taquin} (jdt) construction; see \cite[Appendix A1]{EC2} for a detailed description.
For any SYT $T$ of skew shape, denote by $\Jdt(T)$
the SYT of straight shape obtained from $T$ by performing a sequence of jdt slides.
This tableau is unique by~\cite[Theorem A1.2.4]{EC2}. The following is a well-known property of jdt.

\begin{lemma}[{\cite[Lemma 3.2]{Doran}}]\label{lem:jdtDes}
For any SYT $T$ of skew shape,
\[
\Des(\Jdt(T))=\Des(T).
\]
\end{lemma}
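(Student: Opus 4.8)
The plan is to reduce the statement to the case of a single elementary (forward) jeu de taquin slide. Since $\Jdt(T)$ is obtained from $T$ by a sequence of slides, and every intermediate tableau is again a standard skew tableau (so that its descent set is defined), it suffices to show that one slide preserves the descent set; the general statement then follows by induction on the number of slides, using the uniqueness of the rectification (\cite[Theorem A1.2.4]{EC2}) to guarantee independence of the chosen slide sequence.

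For a single forward slide I would track each entry individually. Writing the successive positions of the hole as $x_0\to x_1\to\cdots\to x_m$, the entry originally at $x_{j+1}$ moves into $x_j$ and all other entries stay put. Thus every moved entry either shifts one column to the left (a rightward step of the hole, same row) or one row up (a downward step of the hole). Consequently, for each value $v$ the row index changes by $\mathrm{row}'(v)-\mathrm{row}(v)\in\{-1,0\}$, equal to $-1$ exactly when $v$ is \emph{lifted} by a downward passage of the hole. Since $i\in\Des(T)$ iff $\mathrm{row}(i+1)>\mathrm{row}(i)$, and each row moves by at most one, one checks, writing $d=\mathrm{row}(i+1)-\mathrm{row}(i)$, that the descent status of $i$ can flip only in two boundary situations: (A) $d=1$ with $i+1$ lifted but $i$ not, and (B) $d=0$ with $i$ lifted but $i+1$ not. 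For all other values of $d$, or when $i$ and $i+1$ share the same lift status, the sign of $d$ is preserved, hence so is the descent.

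The heart of the argument, and the main obstacle, is to rule out configurations (A) and (B). The key point is that the consecutive values $i$ and $i+1$ leave ``no room'': combining the slide rule (the hole always moves the smaller of its right and lower neighbors) with row- and column-strictness of $T$ forces the relevant comparisons and hence ties the lift behavior of $i$ to that of $i+1$. For instance, in case (B) two equal-row consecutive entries must be adjacent (any cell between them would carry a value strictly between $i$ and $i+1$); then lifting $i$ would require the hole's right neighbor to exceed $i$, while column-strictness against $i+1$ forces that same neighbor to be at most $i$, a contradiction. Case (A) is handled by a parallel local analysis, according to whether the hole enters the cell above $i+1$ from the left or from above and to where $i$ sits relative to that cell; in each sub-case the strictness constraints either force $i$ to be lifted as well or produce an impossible sandwiched entry. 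I expect this enumeration, rather than any single computation, to be the delicate part; once the ``no room'' obstruction is isolated, each sub-case collapses quickly.

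Combining the single-slide invariance with the inductive reduction yields $\Des(\Jdt(T))=\Des(T)$, as claimed.
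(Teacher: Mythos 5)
The paper does not actually prove this lemma --- it is imported verbatim from Doran's paper (his Lemma 3.2), so there is no internal proof to compare against; your proposal supplies a self-contained argument, and it is the standard one. Your reduction to a single forward slide is sound (the induction is on complete slides, between which one always has a genuine skew SYT, and uniqueness of the rectification is only needed to make $\Jdt(T)$ well defined), and the bookkeeping is right: since each displaced entry moves exactly one cell left or one cell up, and each entry is displaced at most once per slide (the hole's path is a monotone lattice path that never revisits a cell), the difference $d=\mathrm{row}(i+1)-\mathrm{row}(i)$ changes by at most $1$, so only your configurations (A) and (B) could alter the descent status of $i$. Both are genuinely impossible, and your ``no room'' heuristic is the right mechanism; the two points you should spell out when writing this up are (i) the \emph{existence} of the hole's right neighbor in case (B): if $i$ sits at $(r,c)$ and $i+1$ at $(r,c+1)$, the cell $(r-1,c+1)$ must lie in the current shape (it is in $\lambda$ because $(r,c+1)$ is, and it cannot be in $\mu$ given that the hole occupies $(r-1,c)$), whereupon column-strictness gives it a value $<i$ and the hole slides right instead of down; and (ii) in case (A), that $i+1$ must sit weakly west of $i$ (a cell strictly between them in row $r$ would be sandwiched between $i$ and $i+1$), after which the hole's right neighbor at the critical moment is an untouched entry of row $r$ that is at most $i$, again forcing the hole rightward rather than down --- with the degenerate subcase where $i+1$ lies directly below $i$ handled exactly as you indicate. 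An alternative, shorter route worth knowing is via reading words: $i\in\Des(T)$ iff $i+1$ precedes $i$ in the bottom-to-top row reading word, jdt slides act on reading words by Knuth moves, and a Knuth move can never transpose the consecutive letters $i$ and $i+1$; your direct local analysis buys elementarity at the cost of the case enumeration, while the reading-word proof outsources the work to the jdt--Knuth correspondence.
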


A Young tableau is semistandard   if the entries weakly increase along each row and strictly increase down each column.
 The {\em content} of a semistandard  Young tableau $T$ is the (eventually zero) sequence whose $i$th component is the number of $i$'s 
 in $T$. The {\em reverse reading word} of $T$ is obtained by reading the entries in each row from right to left, and reading the rows from top to bottom. A {\em lattice permutation} is a sequence of positive integers $a_1a_2\dots a_n$ such that every prefix contains at least as many $i$'s as $i+1$'s, for all $i\in \NN$.

Comparing different formulations of the  Littlewood--Richardson rule~\cite[Theorems A1.3.1 and A1.3.3]{EC2}, we obtain the following.

\begin{theorem}
\label{LR_jdt}
Fix $P\in\SYT(\nu)$, and let
\[
c^\lambda_{\mu\nu}:=
|\{T\in \SYT(\lambda/\mu):\ \Jdt(T)=P \}|.
\]
Then $c^\lambda_{\mu\nu}$
is equal to the number of semistandard Young tableaux of shape $\lambda/\mu$
 and content $\nu$ whose reverse reading word is a lattice permutation.
\end{theorem}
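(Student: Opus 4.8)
The plan is to identify both quantities in the statement with the Littlewood--Richardson coefficient $c^\lambda_{\mu\nu}$, now understood as the multiplicity of $s_\nu$ in the Schur expansion $s_{\lambda/\mu}=\sum_\nu c^\lambda_{\mu\nu}\,s_\nu$ of the skew Schur function. The right-hand side of the asserted equality — the number of semistandard Young tableaux of shape $\lambda/\mu$ and content $\nu$ whose reverse reading word is a lattice permutation — is exactly the classical combinatorial Littlewood--Richardson rule, which I would invoke directly from \cite{EC2}. Thus the whole content is to show that the left-hand side, $|\{T\in\SYT(\lambda/\mu):\Jdt(T)=P\}|$, also equals $c^\lambda_{\mu\nu}$; in particular, that it depends only on the shape $\nu$ of $P$ and not on $P$ itself.

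First I would use the two basic facts about rectification: it is well defined, i.e.\ independent of the order of slides \cite[Thm.~A1.2.4]{EC2}, and jeu de taquin slides preserve the Knuth (plactic) equivalence class of the reading word. Since every Knuth class contains a unique straight-shape tableau, for straight-shape $P$ the condition $\Jdt(T)=P$ is equivalent to $T$ being Knuth equivalent to $P$, so the fiber $\{T:\Jdt(T)=P\}$ is the set of skew standard tableaux of shape $\lambda/\mu$ lying in the plactic class of $P$.

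The main obstacle is the independence of this fiber size from the choice of $P$ within $\SYT(\nu)$. I would settle it using Haiman's theory of dual equivalence: $\SYT(\nu)$ forms a single dual equivalence class, and jeu de taquin slides carry dual equivalence classes bijectively to dual equivalence classes, so rectification maps each dual equivalence class contained in $\{T\in\SYT(\lambda/\mu):\shape(\Jdt(T))=\nu\}$ bijectively onto $\SYT(\nu)$. Consequently each such class contributes exactly one tableau to every fiber, and the fiber size equals the number of these classes, which is manifestly independent of $P$. (An equivalent route is the invertibility of jeu de taquin once the positions of the slides are recorded.) Finally, to pin the common fiber size to $c^\lambda_{\mu\nu}$, I would compare generating functions: by Lemma~\ref{lem:jdtDes} and Gessel's fundamental quasisymmetric expansions $s_{\lambda/\mu}=\sum_{T\in\SYT(\lambda/\mu)}F_{n,\Des(T)}$ and $s_\nu=\sum_{P\in\SYT(\nu)}F_{n,\Des(P)}$, grouping the tableaux $T$ first by their rectification target $P$ and then by the shape $\nu$ of $P$ — using that the fiber size depends only on $\nu$ — yields $s_{\lambda/\mu}=\sum_\nu|\{T:\Jdt(T)=P\}|\,s_\nu$, whence linear independence of Schur functions gives $|\{T:\Jdt(T)=P\}|=c^\lambda_{\mu\nu}$. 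This is precisely the comparison of the two Littlewood--Richardson formulations in \cite[Thms.~A1.3.1 and A1.3.3]{EC2}.
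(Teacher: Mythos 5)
Your proof is correct and follows essentially the same route as the paper, whose entire argument is to compare the two formulations of the Littlewood--Richardson rule in \cite[Theorems A1.3.1 and A1.3.3]{EC2}: the jeu-de-taquin fiber count and the lattice-word count both equal $c^\lambda_{\mu\nu}$. The only difference is that you supply a self-contained derivation of the jeu-de-taquin formulation (independence of the fiber size from $P$ via dual equivalence, then pinning it to $c^\lambda_{\mu\nu}$ via the quasisymmetric expansion and the descent-preservation of Lemma~\ref{lem:jdtDes}), whereas the paper simply cites it; your added details are sound.
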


The following property of $\Jdt$ for two-row tableaux will be used in Sections~\ref{sec:2rows} and~\ref{sec:skew}.

\begin{lemma}\label{lem:shapes}
For every $n$, $k$ and $m$ with $0\le m\le k<n$ and $2k \le n+m$,
\[
\{\Jdt(T):\ T\in\SYT((n-k+m,k)/(m))\}=
\bigcup_{d=k-m}^{\min\{k,n-k\}} \SYT(n-d,d).
\]
\end{lemma}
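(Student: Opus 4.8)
The plan is to reduce the asserted set equality to the question of which straight shapes $\nu$ arise as $\shape(\Jdt(T))$ for some $T\in\SYT((n-k+m,k)/(m))$, and to answer this question via the Littlewood--Richardson rule in the form of Theorem~\ref{LR_jdt}. Set $\lambda=(n-k+m,k)$ and $\mu=(m)$. Note that the hypothesis $2k\le n+m$ is exactly what makes $\lambda$ a genuine partition (weakly decreasing rows), while $m\le k<n$ ensures $\mu\subseteq\lambda$, so that the skew shape $\lambda/\mu$ is well defined and has $n$ cells.

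The key structural observation is an all-or-nothing dichotomy. By Theorem~\ref{LR_jdt}, for any fixed $P\in\SYT(\nu)$ the number of $T$ with $\Jdt(T)=P$ equals $c^\lambda_{\mu\nu}$, a quantity depending only on the shape $\nu$ and not on the chosen $P$. Consequently, if $c^\lambda_{\mu\nu}>0$ then every $P\in\SYT(\nu)$ lies in the image $\{\Jdt(T):T\in\SYT(\lambda/\mu)\}$, whereas if $c^\lambda_{\mu\nu}=0$ then no element of $\SYT(\nu)$ does. Thus the left-hand side of the claimed identity equals the union of $\SYT(\nu)$ over all shapes $\nu$ with $c^\lambda_{\mu\nu}>0$, and it remains only to determine these $\nu$.

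To identify them I would invoke Pieri's rule, which applies because $\mu=(m)$ is a single row: $c^\lambda_{\mu\nu}$ equals $1$ when $\lambda/\nu$ is a horizontal strip of size $m$, and $0$ otherwise. In particular $\nu\subseteq\lambda$ forces $\ell(\nu)\le 2$, so $\nu=(n-d,d)$ with $|\nu|=n$. The horizontal-strip condition is then the interlacing chain $\lambda_1\ge\nu_1\ge\lambda_2\ge\nu_2\ge 0$, which unwinds to $n-k+m\ge n-d\ge k\ge d\ge 0$, i.e.\ to $k-m\le d\le\min\{k,n-k\}$. This is precisely the index range in the statement; the inequality $d\ge 0$ is subsumed since $m\le k$ gives $k-m\ge 0$, and the hypothesis $2k\le n+m$ (equivalently $k-m\le n-k$) guarantees the range is nonempty.

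Finally I would assemble the two inclusions. For $\supseteq$: for each $d$ in the range and each $P\in\SYT(n-d,d)$ one has $c^\lambda_{\mu\nu}=1>0$, so $P=\Jdt(T)$ for some $T$ by Theorem~\ref{LR_jdt}. For $\subseteq$: given any $T$, set $\nu=\shape(\Jdt(T))$; then $T$ itself witnesses $c^\lambda_{\mu\nu}\ge 1$, which by the Pieri computation forces $\nu=(n-d,d)$ with $d$ in the range, so $\Jdt(T)$ lies in the right-hand union. I do not anticipate a genuine obstacle: the only real content is the translation of the horizontal-strip interlacing into the stated bounds on $d$, together with the bookkeeping check that $m\le k$ and $2k\le n+m$ make the lower limit $k-m$ nonnegative and the range nonempty, respectively.
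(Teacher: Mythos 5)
Your proof is correct and follows essentially the same route as the paper: both rest on Theorem~\ref{LR_jdt} to reduce the claim to determining which $\nu=(n-d,d)$ have $c^\lambda_{\mu\nu}>0$. The only difference is that you evaluate the coefficient explicitly via Pieri's rule (translating the horizontal-strip interlacing into $k-m\le d\le\min\{k,n-k\}$), whereas the paper simply asserts the value of $c^\lambda_{\mu\nu}$, leaving the computation implicit.
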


\begin{proof}
Let $\lambda=(n-k+m,k)$ and $\mu=(m)$. For $T\in\SYT(\lambda/\mu)$, the tableau $\Jdt(T)$ has shape $\nu=(n-d,d)$ for some $d$. By Theorem~\ref{LR_jdt}, $c^\lambda_{\mu\nu}=1$ if 
$k-m\le d\le \min\{k,n-k\}$ and $c^\lambda_{\mu\nu}=0$ otherwise. 
Thus, every tableau $P$ of shape $(n-d,d)$ with $k-m\le d\le \min\{k,n-k\}$  
appears exactly once as the image under $\Jdt$ of some $T \in \SYT(\lambda/\mu)$.
\end{proof}

\subsection{Generalized jeu de taquin and cyclic descents}\label{sec:prel2}

For a set $D\subseteq[n]$ and an integer $k$, let $k+D$ denote the subset of $[n]$ obtained by adding $k \pmod n$ to each element of $D$.  
Similarly, for $T \in \SYT(\lambda/\mu)$ with $n$ cells, let $k+T$ be the tableau with entries in $[n]$ obtained from $T$ by adding $k\pmod n$ to each entry.
Note that $k+T$ is not standard in general. 
Define $\jdt(k+T)$ to be the standard tableau obtained by applying the following procedure, based on {\it jeu de taquin}, to $k+T$:
\medskip

Set $T_0 = k+T$, and repeat the following {\em elementary step} $\ele$ until $T_0$ is a standard tableau.
\begin{itemize}
\item[($\ele$)]  
Let $i$ be the minimal entry in $T_0$ with the property that the entries immediately above and to the left of it are not both smaller than $i$;
switch $i$ with the larger of these two entries, and let $T_0$ be the resulting tableau.
\end{itemize}

In our notation, the promotion operation%
\footnote{There are two definitions of the promotion operation in the literature. Whereas in~\cite{Rhoades,ER16} promotion is defined as $T\mapsto\jdt(1+T)$, we have chosen to follow here the definition $T\mapsto\jdt(-1+T)$ from~\cite{PPR,StanPE}.} 
can be described as
$T\mapsto\jdt(-1+T)$. We denote by $p T=\jdt(-1+T)$ the image of $T$ under promotion. 

Rhoades' result (Theorem~\ref{Rhoades_thm} above) may be now reformulated as follows.
\begin{theorem}[{\cite[Lemma 3.3]{Rhoades}}]\label{Rhoades_thm2}
Let $m$ be a divisor of $n$.  There exists a cyclic descent map for the shape $(m^{n/m})$, defined for $T \in \SYT(m^{n/m})$ by letting $n\in\cDes(T)$ if and only if $n-1\in \Des(p T)$.
\end{theorem}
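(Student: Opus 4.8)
The plan is to prove that promotion $p$ is a bijection on $\SYT(m^{n/m})$ which cyclically rotates the proposed cyclic descent set by $-1$, that is,
\[
\cDes(pT)=-1+\cDes(T)\qquad(T\in\SYT(m^{n/m})).
\]
Granting this, condition (i) of Definition~\ref{def:extension} holds by construction, since the rule only prescribes whether $n$ is a cyclic descent and leaves $\cDes(T)\cap[n-1]=\Des(T)$ untouched. Condition (ii) is then immediate: as $p$ is a bijection, reindexing the multiset by $p$ gives $\multiset{-1+\cDes(T)}=\multiset{\cDes(pT)}=\multiset{\cDes(S)}$, so the multiset is fixed by the shift $D\mapsto-1+D$ and hence closed under cyclic rotation modulo $n$. (Only the bijectivity of $p$ is needed here; the stronger fact that $p^n=\mathrm{id}$ on rectangles is not required.)

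To establish the pointwise identity I would split $[n]$ into three coordinate ranges. First, the \emph{bulk} relation: for $1\le i\le n-2$,
\[
i\in\Des(pT)\iff i+1\in\Des(T),
\]
which holds for every straight shape and follows by tracking the single jeu-de-taquin slide traced by the relabelled entry during $pT=\jdt(-1+T)$ (the slide does not change the relative rows of any pair of consecutive values $i+1<i+2\le n$, while the relabelling $v\mapsto v-1$ shifts descent positions down by one); it is in the spirit of Lemma~\ref{lem:jdtDes}. This matches coordinates $1,\dots,n-2$ of $-1+\cDes(T)$. Second, coordinate $n-1$ matches tautologically: by the very definition of the map, $n-1\in\Des(pT)\iff n\in\cDes(T)$, and also $n-1\in(-1+\cDes(T))\iff n\in\cDes(T)$.

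The remaining coordinate $n$ is the heart of the matter, and the only place where rectangularity enters: I must prove the \emph{wrap-around}
\[
n\in\cDes(pT)\iff 1\in\Des(T).
\]
Here the unique-corner property of a rectangle is essential. Since $m^{n/m}$ has a single removable corner, the promotion hole always slides from $(1,1)$ to that corner, so $n$ occupies it in every rectangular tableau and the slide traces a monotone lattice path of down/right steps. Tracking the displaced entries, I would prove two geometric criteria: $n\in\cDes(S)$ iff this path \emph{ends} with a down step (equivalently, $n-1$ is pushed to the cell directly above $n$ in $pS$, creating the descent $n-1\in\Des(pS)$), and $1\in\Des(S)$ iff the path \emph{begins} with a down step (the hole first moves toward the cell holding $2$). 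The wrap-around then reduces to the claim that the last step of the path of $pT$ agrees with the first step of the path of $T$, which I would deduce from the fact that promotion cyclically rotates this lattice path. This path-rotation is the main obstacle: it is exactly the rectangular phenomenon that fails for general shapes — on the near-hook $(3,2)$, for instance, promotion does not shift $\cDes$ cyclically, and its rotation must instead be realized by a different $\bbz$-action. Assembling the three ranges yields $\cDes(pT)=-1+\cDes(T)$, completing the proof.
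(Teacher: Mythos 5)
The paper offers no proof of this statement: it is quoted directly from Rhoades \cite{Rhoades}, so there is no internal argument to compare against, and your proposal must be judged on its own. Your overall architecture is sound up to the crux. The bulk relation $i\in\Des(pT)\iff i+1\in\Des(T)$ for $1\le i\le n-2$ does hold for arbitrary straight shapes, coordinate $n-1$ is indeed tautological, and deducing condition (ii) of Definition~\ref{def:extension} from equivariance under the bijection $p$ is correct. You also rightly isolate the wrap-around $n\in\cDes(pT)\iff 1\in\Des(T)$ as the only place where rectangularity enters, and your two geometric criteria (the promotion path of $S$ ends with a down step iff $n\in\cDes(S)$; it begins with a down step iff $1\in\Des(S)$) are both correct for rectangles, where $n$ always occupies the unique corner.

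The gap is the lemma you invoke to close the loop: promotion does \emph{not} cyclically rotate the promotion path. Counterexample in $\SYT(3,3)$: for $T=\young(124,356)$ the promotion path is (right, right, down), whereas $pT=\young(135,246)$ has promotion path (down, right, right), which is not the cyclic shift (right, down, right). (On the orbit $\young(123,456)\mapsto\young(125,346)\mapsto\young(134,256)$ the path does happen to rotate step-by-step, which makes the false lemma easy to believe.) The weaker statement you actually need --- that the last step of the path of $pT$ equals the first step of the path of $T$ --- survives this counterexample, but iterating your own bulk relation shows it is equivalent to $1\in\Des(T)\iff 1\in\Des(p^{n}T)$, i.e., essentially to the fact that $p^n$ is the identity on rectangular SYT (Haiman's theorem), which is precisely the ``stronger fact'' you declare not to be required. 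Without that input, or Rhoades' own representation-theoretic argument, the wrap-around remains unproven; as written, the proposal reduces the theorem to a supporting lemma that is false as stated and whose corrected weaker form carries the entire rectangular-specific difficulty.
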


With the definition of $\cDes$ on $\SYT(m^{n/m})$ given by Theorem~\ref{Rhoades_thm}, the inverse promotion operation $p^{-1}:\SYT(m^{n/m})\to\SYT(m^{n/m})$ cyclically shifts $\cDes$, and it generates a $\bbz_n$-action on $\SYT(m^{n/m})$.

For $\lambda\vdash n-1$, let $\lambda^\Box$ be the skew shape obtained from the Young diagram of shape $\lambda$ by placing a disconnected cell at its upper right corner. 
The following theorem reformulates \cite[Proposition 5.3]{ER16}.

\begin{theorem}[{\cite{ER16}}]\label{thm:cDes_lambda_box}
Let $\lambda\vdash n-1$. 
There exists a cyclic descent map for $\lambda^\Box$, defined for $T\in \SYT(\lambda^\Box)$ by letting $n\in\cDes(T)$ if and only if
$n$ is strictly north of $1$ (i.e., $n$ is the entry in the disconnected cell) or
$n-d\in \Des (\jdt(-d+T))$, where $d$ is the entry in the disconnected cell. 
\end{theorem}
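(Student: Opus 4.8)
The plan is to verify the two conditions of Definition~\ref{def:extension} for the map $\cDes$ described in the statement. Condition (i) holds essentially by construction: the rule only prescribes whether $n\in\cDes(T)$, while by the convention of Definition~\ref{def:extension} the rest of the set is $\cDes(T)\cap[n-1]=\Des(T)$. The entire content is therefore condition (ii), namely that the multiset $\multiset{\cDes(T):T\in\SYT(\lambda^\Box)}$ is closed under cyclic rotation modulo $n$. To prove this I would exhibit an explicit bijection $\rho:\SYT(\lambda^\Box)\to\SYT(\lambda^\Box)$ satisfying
\[
\cDes(\rho T)=1+\cDes(T)\qquad (T\in\SYT(\lambda^\Box)),
\]
since then the multiset of cyclic descent sets is invariant under $D\mapsto 1+D$, which is exactly cyclic rotation; iterating $\rho$ moreover produces the desired $\bbz_n$-action, in analogy with the discussion following Theorem~\ref{Rhoades_thm2}. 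In particular this gives a constructive realization of the existence statement of Theorem~\ref{ER_thm}.

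The natural candidate is generalized inverse promotion $\rho T:=\jdt(1+T)$, the inverse of the promotion $pT=\jdt(-1+T)$ introduced in Section~\ref{sec:prel2}. First I would check that $\rho$ is a well-defined self-bijection of $\SYT(\lambda^\Box)$: the elementary step $(\ele)$ only rearranges entries among the fixed cells of $\lambda^\Box$ and terminates at a standard filling of the \emph{same} shape, so $\rho$ maps $\SYT(\lambda^\Box)$ to itself, and $p=\jdt(-1+\cdot)$ is a two-sided inverse. Hence $\rho$ permutes $\SYT(\lambda^\Box)$.

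The heart of the argument is the descent identity $\cDes(\rho T)=1+\cDes(T)$, which I would split into three parts according to the index. First, the \emph{interior} identity: for $2\le j\le n-1$ one shows $j\in\Des(\rho T)\iff j-1\in\Des(T)$, because relabeling every entry by $+1$ and then restandardizing carries the consecutive pair $(j-1,j)$ of $T$ to the pair $(j,j+1)$ of $\rho T$ while preserving their relative vertical position; this is the standard behavior of promotion, verified through the jdt machinery of Section~\ref{sec:prel2}. Second, the two \emph{wrap-around} identities $n\in\cDes(T)\iff 1\in\Des(\rho T)$ and $n\in\cDes(\rho T)\iff n-1\in\Des(T)$. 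This is exactly where the auxiliary definition via the disconnected-cell value $d$ and the straightening $\jdt(-d+T)$ is used: rotating all entries by $-d$ places the maximum $n$ in the disconnected cell and sends the cyclic pair $(n,1)$ to the ordinary consecutive pair $(n-d,\,n-d+1)$, so that the cyclic descent ``$n$'' of $T$ becomes the ordinary descent ``$n-d$'' of the standardized tableau $\jdt(-d+T)$. The degenerate case $d=n$—the disconnected cell already holds the maximum—is precisely the one in which $n-d=0\notin[n-1]$ and $\jdt(-d+T)=T$ cannot detect the wrap-around; this is why the definition handles it separately through the clause ``$n$ is strictly north of $1$,'' and one must check that $\rho$ is consistent with the rotation of $\cDes$ across this boundary as well.

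The main obstacle is this wrap-around analysis: one must show that the shape-preserving straightening $\jdt$ interacts with ordinary descents in the controlled manner needed, and that the two-clause indicator (``disconnected cell holds $n$'' versus the descent test on $\jdt(-d+T)$) is exactly the function that transforms correctly under one step of $\rho$, including the delicate matching at the degenerate boundary $d=n$. By contrast, condition (i), the bijectivity of $\rho$, and the interior identity are routine once the generalized promotion framework is in place.
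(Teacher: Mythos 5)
Your overall architecture is the right one --- condition (i) of Definition~\ref{def:extension} is automatic, and condition (ii) would follow from an equivariant bijection $\rho$ with $\cDes(\rho T)=1+\cDes(T)$ --- and indeed the paper does not reprove this theorem but cites~\cite{ER16}, recording the relevant bijection in Section~\ref{sec:prel2}. The genuine gap is your choice of candidate: plain inverse promotion $\rho T:=\jdt(1+T)$ does \emph{not} shift the cyclic descent set on $\SYT(\lambda^\Box)$, so the identity you plan to verify is false and no amount of wrap-around analysis can rescue it. The paper warns about exactly this at the start of Section~\ref{sec:hooks_plus_action} (``for hooks plus an internal or disconnected cell, this is not always the case''), and hooks plus a disconnected cell are instances of $\lambda^\Box$. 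Concretely, take $\lambda=(2,1)$, $n=4$, and
\[
T=\young(::2,14,3)\,.
\]
Here $\Des(T)=\{2\}$, and since $d=2$ and $\jdt(-2+T)=\young(::4,12,3)$ has $n-d=2$ as a descent, the rule gives $\cDes(T)=\{2,4\}$. But $\jdt(1+T)=\young(::3,12,4)$, whose cyclic descent set is $\{3,4\}$, whereas $1+\cDes(T)=\{3,1\}$. (This shape is a near-hook, so by Lemma~\ref{t.unique_cDes} its cyclic descent map is unique and these values are forced.) The failure occurs precisely at the wrap-around positions $1$ and $n$, which you correctly identify as the heart of the matter, but for this $\rho$ both wrap-around identities break.

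The bijection that does work, as stated immediately after Theorem~\ref{thm:cDes_lambda_box}, is $\bij T=\jdt(1+d+\jdt(-d+T))$, i.e., promotion conjugated by the rotation that places $n$ in the disconnected cell. Your heuristic for the definition --- rotating by $-d$ turns the cyclic pair $(n,1)$ into the ordinary consecutive pair $(n-d,\,n-d+1)$, so that the cyclic descent at $n$ becomes an ordinary descent of $\jdt(-d+T)$ --- is the right intuition, but the same conjugation must be built into the bijection itself; with plain $\jdt(1+T)$ the disconnected entry simply advances from $d$ to $d+1$ and is no longer aligned with the rotation of the descent set. On the example above, $\bij T=\jdt(3+\jdt(-2+T))=\young(::3,14,2)$ has cyclic descent set $\{1,3\}=1+\{2,4\}$, as required. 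Repairing your proof therefore means replacing the candidate map and redoing the interior and wrap-around analysis for the conjugated promotion, which is essentially the content of the proof in~\cite{ER16}.
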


\begin{example} Let
\[
T = \young(:::3,124,56).
\]
The entry in the disconnected cell is $d=3$. Computing
\[
-3+T=\young(:::6,451,23) \overset{\ele}{\mapsto} \young(:::6,415,23) \overset{\ele}{\mapsto}
\young(:::6,145,23) \overset{\ele}{\mapsto}
\young(:::6,135,24)=\jdt(-3+T),
\]
we see that $6-3 \in \{1,3\} = \Des(\jdt(-3+T))$, and so $6\in\cDes(T)$.
Therefore, $\cDes(T) =\{3,4,6\}$.
\end{example}

With the definition of $\cDes$ from Theorem~\ref{thm:cDes_lambda_box},
the map $\bij:\SYT(\lambda^\Box)\to\SYT(\lambda^\Box)$ given by
$\bij T=\jdt(1+d+\jdt(-d+T))$ is a bijection that cyclically shifts $\cDes$, as shown in~\cite{ER16}. This bijection generates a $\bbz_n$-action on $\SYT(\lambda^\Box)$.

\begin{example} Below is an orbit of the $\bbz_6$-action generated by $\bij:\SYT((3,2)^\Box)\to\SYT((3,2)^\Box)$, where cyclic descents are marked in boldface \textcolor{red}{\bf red}:
\begin{center}
\begin{tikzpicture}[scale=2.3]
\node (0) at (0,0) {$\young(:::\rsix,\rone\rthree5,24) \overset{\bij}{\mapsto}$};
\node (1) at (1,0) {$\young(:::\rone,\rtwo\rfour6,35) \overset{\bij}{\mapsto}$};
\node (2) at (2,0) {$\young(:::\rtwo,1\rthree\rfive,46) \overset{\bij}{\mapsto}$};
\node (3) at (3,0) {$\young(:::\rthree,12\rfour,5\rsix) \overset{\bij}{\mapsto}$};
\node (4) at (4,0) {$\young(:::\rfour,\rone3\rfive,26) \overset{\bij}{\mapsto}$};
\node (5) at (4.9,0) {$\young(:::\rfive,1\rtwo4,3\rsix)$};
\draw [->, >=stealth] (4.9,.3) -- (4.9,.45) -- node[above]{\scriptsize $\bij$} (-0.1,.45) -- (-0.1,.3); 
\end{tikzpicture}
\end{center}
\end{example}

\section{Near-hooks and the uniqueness of cyclic descents}\label{sec:uniqueness}

\subsection{Uniqueness of cyclic descents}

Recall the following definition from \cite{ARR}.

\begin{definition}[{\cite[Definition 2.1]{ARR}}]\label{def:cDes}
Let $\TTT$ be a finite set. A {\em descent map} is any map
$\Des: \TTT \longrightarrow 2^{[n-1]}$. 
A {\em cyclic extension} of $\Des$ is
a pair $(\cDes,\bij)$, where 
$\cDes: \TTT \longrightarrow 2^{[n]}$ is a map 
and $\bij: \TTT \longrightarrow \TTT$ is a bijection,
satisfying the following axioms:  for all $T$ in  $\TTT$,
\[
\begin{array}{rl}
\text{(extension)}   & \cDes(T) \cap [n-1] = \Des(T),\\
\text{(equivariance)}& \cDes(\bij T)  = 1+\cDes(T),\\
\text{(non-Escher)}  & \varnothing \subsetneq \cDes(T) \subsetneq [n].\\
\end{array}
\]
\end{definition}

By introducing the non-Escher axiom, this definition differs slightly from Definition~\ref{def:extension} above.
However, by~\cite[Theorem 7.8]{ARR}, this makes a difference only for very special shapes, namely those consisting of a single row, a single column, or an even number of single-cell connected components. 

\begin{theorem}[{\cite[Theorem 1.1]{ARR}}]\label{conj1}
Let $\s$ be a skew shape with $n$ cells.
The descent map $\Des$ on $\SYT(\s)$ has a cyclic extension $(\cDes,\bij)$ if and only if $\s$ is not a connected ribbon.
Furthermore, for all $J \subseteq [n]$, all such cyclic extensions share the same cardinalities $\card{\cDes^{-1}(J)}$.
\end{theorem}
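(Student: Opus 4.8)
The plan is to reduce both assertions to a single problem about nonnegative, cyclically invariant refinements of the descent distribution, and then to resolve that problem through the Schur-positivity of Postnikov's toric Schur functions. First I would set up the reduction. Given a cyclic extension $(\cDes,\bij)$, write $a_J:=\card{\cDes^{-1}(J)}$ for $J\subseteq[n]$ and $d_D:=\card{\Des^{-1}(D)}$ for $D\subseteq[n-1]$. Equivariance forces $\bij$ to carry $\cDes^{-1}(J)$ bijectively onto $\cDes^{-1}(1+J)$, so $a_J=a_{1+J}$; the extension axiom forces $a_D+a_{D\cup\{n\}}=d_D$; and the non-Escher axiom forces $a_\emptyset=a_{[n]}=0$. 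Conversely, given nonnegative integers $(a_J)$ with these three properties, I would split each fiber $\Des^{-1}(D)$ into blocks of sizes $a_D$ and $a_{D\cup\{n\}}$ to build $\cDes$, and then glue, orbit by orbit, arbitrary bijections between the equal-size fibers $\cDes^{-1}(J)$ and $\cDes^{-1}(1+J)$ into one equivariant $\bij$. Thus a cyclic extension exists precisely when such $(a_J)$ exist, and all extensions share the same fiber sizes precisely when $(a_J)$ is unique; both halves of the theorem are now questions about one linear system together with a nonnegativity constraint.

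Second, I would encode the linear system in the ring of quasisymmetric functions. By Gessel's theorem $s_{\lambda/\mu}=\sum_{D\subseteq[n-1]}d_D\,F_{D,n}$, where $F_{D,n}$ denotes the fundamental quasisymmetric function; a cyclically invariant extension is exactly a lift of $s_{\lambda/\mu}$ to a cyclic quasisymmetric function $\sum_{J\subseteq[n]}a_J\widetilde{F}_J$, with nonnegativity of the $a_J$ amounting to positivity in the cyclic-fundamental basis. The decisive ingredient is the theory of toric (cylindric) Schur functions: realizing $\lambda/\mu$ on a cylinder, the generating function of cylindric standard tableaux equipped with the cyclic-descent statistic can be identified with a toric Schur function, whose expansion in ordinary Schur functions has coefficients equal to genus-zero three-point Gromov--Witten invariants of a Grassmannian. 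Because these invariants are nonnegative, the resulting coefficients $a_J$ in the cyclic-fundamental basis are nonnegative, which produces the desired fiber sizes --- as long as the shape has room on the cylinder, that is, as long as $\lambda/\mu$ is not a connected ribbon.

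Third, I would treat the excluded case and the uniqueness clause. For a connected ribbon the cylindric realization degenerates (the diagonal strip wraps with no slack), and here I would argue directly that cyclic invariance together with the equations $a_D+a_{D\cup\{n\}}=d_D$ forces some $a_J<0$, so no extension can exist; this is exactly the phenomenon recorded in Proposition~\ref{prop:ribbons}. For the ``furthermore'' clause I would show the nonnegative solution is unique. The starting point is that non-Escher pins $a_\emptyset=a_{[n]}=0$, whence the equation for $D=\emptyset$ gives $a_{\{n\}}=d_\emptyset$ and fixes the entire cyclic orbit of singletons; propagating through the remaining extension equations, with nonnegativity removing any residual freedom, then determines every orbit value. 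Equivalently and more structurally, every valid extension has the same cyclic-descent generating function, namely the single toric Schur function identified above, so its cyclic-fundamental coefficients $a_J$ are forced.

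The main obstacle is the second step: proving that the cyclic-descent generating function genuinely coincides with a toric Schur function, and then invoking the geometric nonnegativity of the Gromov--Witten invariants. The reduction of the first paragraph and the ribbon/uniqueness bookkeeping of the third are comparatively routine, but this identification is the deep input, and it is precisely the place where a natural, tableau-by-tableau description of $\cDes$ is lost --- which is what motivates the explicit combinatorial constructions for near-hooks and two-row shapes carried out in the present paper.
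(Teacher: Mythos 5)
The paper does not actually prove this statement: it is imported verbatim from \cite{ARR} (Theorem 1.1 there), and the introduction of the present paper only summarizes the strategy of that external proof. Your first reduction is correct and matches the setup of \cite{ARR}: a cyclic extension exists if and only if there are nonnegative integers $(a_J)_{J\subseteq[n]}$ with $a_J=a_{1+J}$, $a_D+a_{D\cup\{n\}}=\card{\Des^{-1}(D)}$ for $D\subseteq[n-1]$, and $a_\varnothing=a_{[n]}=0$, with $\bij$ then assembled orbit by orbit from arbitrary bijections between equal-sized fibers. One correction to your third step: nonnegativity plays no role in uniqueness. Induction on $\card{J}$, using $a_J=a_{J'}=d_{J'\setminus\{n\}}-a_{J'\setminus\{n\}}$ after rotating $J$ to some $J'$ containing $n$, together with $a_\varnothing=0$, already forces every $a_J$ as an integer; so the ``furthermore'' clause is pure linear algebra once the non-Escher normalization is imposed, and your phrase about nonnegativity ``removing residual freedom'' suggests a dependence that is not there.

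The genuine gap is your second step, which you yourself flag as the main obstacle: the identification of the cyclic-descent generating function with a toric Schur function, and the deduction of $a_J\ge 0$ from the nonnegativity of Gromov--Witten invariants, is described rather than proved, and it is the entire content of the existence half of the theorem. Note in particular that Schur-positivity of the toric Schur function gives nonnegativity in the Schur basis, whereas what your reduction requires is nonnegativity of the coefficients $a_J$ in the cyclic analogue of Gessel's fundamental quasisymmetric basis; bridging these two positivity statements is a nontrivial additional step in \cite{ARR}, not an automatic consequence. The non-existence claim for connected ribbons is likewise only asserted: you cannot lean on Proposition~\ref{prop:ribbons} of this paper, since that proposition is itself stated as a consequence of the theorem you are proving; a self-contained argument must solve the (uniquely solvable) linear system for a ribbon and exhibit a forced negative value $a_J$. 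As it stands, your proposal is an accurate road map of the argument in \cite{ARR}, but the load-bearing steps are missing.
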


Thus, the cardinalities $\card{\cDes^{-1}(J)}$ are unique for all $J \subseteq [n]$ but, in general, the map $\cDes$ itself is not unique. 
In this section we study an interesting family of shapes for which the map $\cDes$ {\em is} unique; note, however, that the bijection $\bij$ is not necessarily unique even in this case.
This is the family of skew shapes for which the descent numbers of all SYT have only two possible (consecutive) values, $k-1$ and $k$. 
Specifically, for a skew shape $\s$ define the set 
\[
D(\s) := \{\card{\Des(T)} \,:\, T \in \SYT(\s)\}.
\]

\begin{lemma}\label{t.unique_cDes}
Let $\s$ be a non-ribbon skew shape with $n$ cells.
If $D(\s) = \{k-1, k\}$ for some $1 \le k \le n-1$, 
then there is a unique cyclic descent map $\cDes: \SYT(\s) \longrightarrow 2^{[n]}$  satisfying 
Definition~\ref{def:extension}, 
defined by
\[
n \in \cDes(T) \iff \card{\Des(T)} = k-1;
\]
equivalently, by requiring $\card{\cDes(T)} = k$ for all $T \in \SYT(\s)$.
\end{lemma}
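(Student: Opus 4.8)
The plan is to leverage Theorem~\ref{conj1} to reduce the uniqueness statement to a counting argument about fiber cardinalities. By Theorem~\ref{conj1}, since $\s$ is a non-ribbon skew shape, a cyclic extension exists and all cyclic descent maps share the same fiber sizes $\card{\cDes^{-1}(J)}$. The observation I would exploit is that condition~(i) of Definition~\ref{def:extension} already determines $\cDes(T)\cap[n-1]=\Des(T)$ completely, so the only freedom in defining $\cDes(T)$ is the binary choice of whether $n\in\cDes(T)$. Thus uniqueness of the map amounts to showing that this binary choice is forced for every single tableau $T$.

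\medskip

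\noindent First I would set up the key numerical constraint coming from the hypothesis $D(\s)=\{k-1,k\}$. For any $T\in\SYT(\s)$, by condition~(i) we have $\card{\cDes(T)}=\card{\Des(T)}$ or $\card{\Des(T)}+1$, according to whether $n\notin\cDes(T)$ or $n\in\cDes(T)$. The heart of the argument is to show that the cyclic rotation property~(ii) forces $\card{\cDes(T)}$ to be constant across all $T$, with the constant value equal to $k$. To see why a constant is forced, I would argue as follows: property~(ii) says the multiset $\multiset{\cDes(T):T\in\SYT(\s)}$ is closed under adding $1\pmod n$ to all elements of each set. Adding $1\pmod n$ to a set either preserves its cardinality (the generic case) or changes it by $\pm 1$ precisely when the rotation wraps $n$ around to $1$ or vice versa. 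The cleaner route is to use that rotation preserves cardinality of each individual set $J\mapsto 1+J$, hence the rotation-invariance of the multiset implies that the number of tableaux with $\card{\cDes(T)}=j$ is the same as the number with $\card{\cDes(T)}=j$ after rotation—so the \emph{distribution} of $\card{\cDes(\cdot)}$ is rotation-invariant, but since cardinality is itself rotation-invariant, this gives no immediate contradiction. The correct and decisive point is different: I would instead use the sum $\sum_T \card{\cDes(T)}$ together with the known fiber sizes.

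\medskip

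\noindent The decisive computation I would carry out is a double-counting of incidences. By rotation-invariance of the multiset $\multiset{\cDes(T)}$, for each fixed $i\in[n]$ the number of tableaux $T$ with $i\in\cDes(T)$ is the same for all $i$; call this common value $N$. Then $\sum_T\card{\cDes(T)}=nN$, so the average cyclic descent set size is $nN/\card{\SYT(\s)}$. On the other hand, since each $\cDes(T)$ contains $\Des(T)$ and possibly $n$, and $\card{\Des(T)}\in\{k-1,k\}$, every $\card{\cDes(T)}$ lies in $\{k-1,k,k+1\}$; I would then show using condition~(i) applied to $i\in[n-1]$ (so that $i\in\cDes(T)\iff i\in\Des(T)$) together with the rotation-invariance forcing the count at position $n$ to equal the count at positions $1,\dots,n-1$, that in fact the only consistent value is $\card{\cDes(T)}=k$ identically. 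This pins down $n\in\cDes(T)$ exactly when $\card{\Des(T)}=k-1$, giving both the displayed formula and uniqueness. \textbf{The main obstacle} I anticipate is making the counting argument fully rigorous: one must rule out the a priori possibility of a ``mixed'' map where some tableaux have $\card{\cDes(T)}=k-1$ (from $\card{\Des(T)}=k-1$, $n\notin\cDes$) balanced against others with $\card{\cDes(T)}=k+1$ (from $\card{\Des(T)}=k$, $n\in\cDes$). Ruling this out is exactly where rotation-invariance at position $n$ must be combined with the fixed descent structure at positions $1,\dots,n-1$; the equivariance/rotation property forces the position-$n$ incidence count to match, and because $\card{\Des(T)}\le k$ already caps the contribution, the balance can only close if every tableau sits at the middle value $k$. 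Verifying this last balance carefully, rather than hand-waving the ``mixed'' case away, is the technically delicate step.
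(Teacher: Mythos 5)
Your setup is right (existence from Theorem~\ref{conj1}; the only freedom is the binary choice of whether $n\in\cDes(T)$; the goal is to force $\card{\cDes(T)}=k$ for all $T$), and you correctly identify the dangerous scenario: a ``mixed'' map in which some tableau with $\card{\Des(T)}=k-1$ is denied $n$ while some tableau with $\card{\Des(T)}=k$ receives it. But the tool you propose — double-counting incidences — cannot close that case, and you do not actually supply the missing step. Rotation-closure of the multiset does give that the number $N_i$ of tableaux with $i\in\cDes(T)$ is independent of $i$, hence determines $N_n=\sum_T[\,n\in\cDes(T)\,]$. However, this total is unchanged if you swap which tableaux receive $n$: writing $a_1$ (resp.\ $b_1$) for the number of tableaux with descent number $k-1$ (resp.\ $k$) that are assigned $n$, the incidence count constrains only $a_1+b_1$, not the split. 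So the mixed map passes every test your computation imposes, and the assertion that ``the cap $\card{\Des(T)}\le k$ forces the balance to close at $k$'' does not follow from anything you have written. This is a genuine gap, not a technicality.

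The missing idea is to apply rotation-closure to \emph{individual sets} rather than to aggregated incidence counts, which is exactly what the paper does. If some $T$ has $\card{\cDes(T)}=k-1$, then $\cDes(T)\ne\varnothing$ (the degenerate case $k=1$, $\cDes(T)=\varnothing$ being excluded for non-ribbon shapes), so some cyclic rotation $J'$ of $\cDes(T)$ contains $n$; by condition (ii) there is a $T'$ with $\cDes(T')=J'$, and then $\card{\Des(T')}=\card{J'}-1=k-2\notin D(\s)$, a contradiction. Symmetrically, if some $T$ has $\card{\cDes(T)}=k+1$, rotate until $n$ is \emph{not} in the set (possible since $\cDes(T)\ne[n]$) to obtain $T'$ with $\card{\Des(T')}=k+1\notin D(\s)$. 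This two-line argument, which needs no counting at all, is what pins down $\card{\cDes(T)}=k$ for every $T$ and hence the uniqueness; you should replace the incidence computation with it.
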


\begin{proof}
Under the given assumption on $D(\s)$, the size of a cyclic descent set $\cDes(T)$ must be $k-1$, $k$, or $k+1$. 
If $\card{\cDes(T)} = k-1 > 0$ for some $T \in \SYT(\s)$ (note that $\card{\cDes(T)} = 0$ is impossible by the non-Escher axiom), then, by Definition~\ref{def:extension}(ii), there exists $T' \in \SYT(\s)$ with $\card{\cDes(T')} = k-1$ and $n \in \cDes(T')$; but then $\card{\Des(T')} = \card{\cDes(T')} - 1 = k-2$, which is impossible.
Similarly, if $\card{\cDes(T)} = k+1 < n$ for some $T \in \SYT(\s)$ (note that $\card{\cDes(T)} = n$ is impossible by the non-Escher axiom), then there exists $T' \in \SYT(\s)$ with $\card{\cDes(T')} = k+1$ and $n \not\in \cDes(T')$; hence $\card{\Des(T')} = \card{\cDes(T')} = k+1$, which is again impossible.
It follows that, necessarily, $\card{\cDes(T)} = k$ for all $T \in \SYT(\s)$, leading to the claimed uniqueness,
while existence follows from Theorem~\ref{conj1}.
\end{proof}

\begin{remark}
A similar proof shows that if $D(\s) = \{k\}$ consists of a single value, then there is {\em no} cyclic descent map for $\s$. As we shall see below (Theorem~\ref{t.two_descents}), this happens only for a hook or its reverse.
\end{remark}

\subsection{Near-hooks}

We now provide an explicit description of all the shapes satisfying the assumptions of Lemma~\ref{t.unique_cDes}. 

For a SYT $T$, let $T^t$ denote its {\em transpose} (or conjugate) tableau, obtained by reflecting $T$ along the main diagonal, i.e., by interchanging rows with columns.
For $i \in [n-1]$, $i \in \Des(T^t)$ if and only if $i \not\in \Des(T)$.
The {\em reverse} $(\s)^{\rev}$ of a skew shape $\s$ is the skew shape obtained by rotating $\s$ by $180^\circ$. The 
{\em reverse} of a tableau $T \in \SYT(\s)$ is the tableau $T^{\rev} \in \SYT((\s)^{\rev})$ obtained from the $180^\circ$ rotation of $T$ by replacing each entry $i$ by $n+1-i$, where $n$ is the number of cells in $\s$.
It is easy to see that $i \in \Des(T^{\rev})$ if and only if $n-i \in \Des(T)$.

\begin{example}
For $\lambda=(5,4,2,2)$ and $\mu=(2,2,1)$, here is a tableau $T\in\SYT(\s)$ and its corresponding reverse tableau $T^{\rev} \in \SYT((\s)^{\rev})$:
\[
\young(::346,::78,:1,25) \quad\overset{\rev}{\longrightarrow}\quad
\young(:::47,:::8,:12,356).
\]
Their descent sets are $\{1,4,6\}$ and $\{2,4,7\}$, respectively.
\end{example}

For a skew shape $\s$, we write $\s = s_1 \oplus \ldots \oplus s_t$ to indicate that $s_1, \ldots, s_t$ are its connected components, ordered from southwest to northeast. 

\begin{example}
The shape $\s = ((2,2)/(1)) \oplus (1) \oplus (3)$ is
\[
\young(:::\hfill\hfill\hfill,::\hfill,:\hfill,\hfill\hfill).
\]
\end{example}

\medskip

In light of Lemma~\ref{t.unique_cDes}, the main result of this section is the following.

\begin{theorem}\label{t.two_descents}
Let $\s$ be skew shape with $n \ge 2$ cells, and let $1 \le k \le n-1$ be an integer. Then:
\begin{enumerate}
\item
$D(\s) = \{k\}$ if and only if 
$\s$ is either the hook $(n-k,1^k)$ or its reverse.
\item
$D(\s) = \{k-1,k\}$ if and only if 
either $\s$ or its reverse is ``one cell away from a hook'', namely has one of the following forms.
\begin{enumerate}[(a)]
\item
{\em Hook minus its corner cell:} $(n-k+1,1^k)/(1) = (1^k) \oplus (n-k)$.
\item
{\em Hook plus a disconnected cell:} $(n-k,1^{k-1}) \oplus (1)$ or $(1) \oplus (n-k,1^{k-1})$.
\item
{\em Hook plus an internal cell:} $(n-k,2,1^{k-2})$, with $2 \le k \le n-2$.
\end{enumerate}
\end{enumerate}
\end{theorem}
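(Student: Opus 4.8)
The plan is to characterize skew shapes by the possible values of $|\Des(T)|$ as $T$ ranges over $\SYT(\s)$, treating both parts of the theorem uniformly through the descent statistic. The fundamental observation is that $|\Des(T)|$ equals the number of rows minus one in the straight shape $\Jdt(T)$: indeed, by Lemma~\ref{lem:jdtDes} we have $\Des(T) = \Des(\Jdt(T))$, and for a straight-shape SYT $P$ of partition shape $\nu$, the descent set takes its extreme cardinalities precisely when $P$ has the row-superstandard or column-superstandard filling. More to the point, the \emph{range} of $|\Des|$ over all $T \in \SYT(\s)$ is governed by which straight shapes $\nu$ appear as $\Jdt(T)$, via Theorem~\ref{LR_jdt}: the straight shapes occurring are exactly those $\nu \vdash n$ with $c^\lambda_{\mu\nu} \neq 0$, i.e.\ those $\nu$ for which $s_{\nu}$ appears in $s_{\lambda/\mu}$. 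So the first step is to reduce the computation of $D(\s)$ to understanding the set of $\nu$ with $c^\lambda_{\mu\nu} > 0$, together with the minimal and maximal number of rows attainable by an SYT of each such $\nu$.

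The second step is to pin down, for a single straight shape $\nu = (\nu_1, \ldots, \nu_\ell)$, the set $\{|\Des(P)| : P \in \SYT(\nu)\}$. I expect this to be the full integer interval $[\ell - 1, \, n - \nu_1]$ (number of rows minus one, up to number of columns minus one), realized by filling tableaux that make successive entries go down as much or as little as possible; a short induction or an explicit construction of tableaux achieving each intermediate value should suffice. Combining with the first step, $D(\s)$ is the union of these intervals over all $\nu$ with $c^\lambda_{\mu\nu} > 0$. For part (1), $D(\s) = \{k\}$ forces every occurring $\nu$ to be a hook with exactly $k+1$ rows (so that $\ell - 1 = n - \nu_1 = k$), and forces only one such $\nu$ to occur; the Littlewood--Richardson rule then pins $\s$ down to a hook or its reverse, using that transposition/reversal of shapes corresponds to conjugation/reversal on the $\nu$ side and fixes the constraint $D(\s)=\{k\}$.

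The third and most delicate step is part (2): $D(\s) = \{k-1,k\}$ means the union of intervals is exactly the two-element set $\{k-1,k\}$. This requires that the set of occurring $\nu$'s is very constrained — either a single $\nu$ of near-hook type admitting exactly the two descent-cardinalities $k-1,k$, or two hook shapes whose singleton descent-values are $k-1$ and $k$. I would enumerate the candidate skew shapes by first reducing (via reversal, which is a symmetry of the hypothesis) to a normalized form, then arguing that any $2\times 2$ block beyond what the listed shapes (a), (b), (c) contain would force a $\nu$ with an interval of length $\ge 2$, pushing $|D(\s)| \ge 3$. The main obstacle I anticipate is the bookkeeping in this converse direction: showing that the three listed families are the \emph{only} shapes with $D(\s) = \{k-1,k\}$, which amounts to a careful case analysis on the shape of $\s$ (in particular on its connected components and on how much it deviates from a hook) ruling out every other possibility. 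The forward direction — that each of (a), (b), (c) actually has $D(\s) = \{k-1,k\}$ — should be a direct computation of the occurring $\nu$ via Theorem~\ref{LR_jdt} (and in the two-row-flavored cases (a),(c), Lemma~\ref{lem:shapes} will give the relevant $\Jdt$-images directly), checking that exactly the values $k-1$ and $k$ arise.
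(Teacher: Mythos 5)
Your reduction $D(\s)=\bigcup_{\nu}\{\card{\Des(P)}:P\in\SYT(\nu)\}$, the union taken over constituents $\nu$ with $c^\lambda_{\mu\nu}>0$, is correct (it follows from Lemma~\ref{lem:jdtDes} and Theorem~\ref{LR_jdt}), and it is a genuinely different route from the paper, which never passes through Littlewood--Richardson coefficients. But the proposal leaves unproved exactly the steps that constitute the theorem's content, namely the two ``only if'' directions. For part (1), the assertion that ``the Littlewood--Richardson rule then pins $\s$ down to a hook or its reverse'' hides the real work: you must show that a skew shape whose skew Schur function is supported on the single hook $(n-k,1^k)$ (possibly with multiplicity $\ge 2$) is a translate of that hook or of its $180^\circ$ rotation. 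This is a known but nontrivial classification (of skew shapes with $s_{\lambda/\mu}=c\,s_\nu$), not a consequence of merely citing the rule, and you give no argument for it. For part (2), you correctly identify that the constituents must lie in $\{(n-k+1,1^{k-1}),(n-k,1^k),(n-k,2,1^{k-2})\}$, but the passage from that constraint on constituents back to the list (a)--(c) of shapes is precisely ``the main obstacle'' you name, and the plan offered (``any $2\times 2$ block beyond what the listed shapes contain would force \dots'') is not an argument; note for instance that $(n-k,2,1^{k-2})$ itself contains a $2\times 2$ block, so the dichotomy as stated is not even the right one.

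Two further remarks. First, the full-interval claim $\{\card{\Des(P)}:P\in\SYT(\nu)\}=[\ell(\nu)-1,\,n-\nu_1]$ is plausible but also unproved in your sketch; fortunately you do not need it --- only the values $\min=\ell(\nu)-1$ and $\max=n-\nu_1$ enter your argument, and those require an explicit tableau construction. That construction is essentially Lemma~\ref{t.minmax_Des}(1) of the paper, which proves directly (for an arbitrary skew shape, via a ``skew row'' filling and a column-chain lower bound) that $\min D(\s)=c-1$ and $\max D(\s)=n-r$, where $c,r$ are the longest column and row of $\s$. Second, once one has that lemma, the paper's converse directions become a short geometric argument: $D(\s)=\{k\}$ forces $(c-1)+(r-1)=n-1$, and $D(\s)=\{k-1,k\}$ forces $(c-1)+(r-1)=n-2$, after which the closure property of skew shapes (if $(i,j)$ and $(i',j')$ lie in $\s$ then so do $(i,j')$ and $(i',j)$) pins down $\s$ as a hook, respectively a near-hook. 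I would recommend replacing the LR-based classification steps with this min/max argument, or else supplying the missing classification of skew shapes by their Schur supports; as written, the proposal is a strategy with the decisive steps still open.
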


\begin{definition}
The shapes in items (a), (b) and (c) of Theorem~\ref{t.two_descents}(2) will be called {\em near-hooks}.
\end{definition}

\begin{example} 
For $n=5$ and $k=2$, here are the various near-hooks:
\[
\young(:\hfill\hfill\hfill,\hfill,\hfill)\ , \qquad
\young(:::\hfill,\hfill\hfill\hfill,\hfill)\ ,\qquad
\young(:\hfill\hfill\hfill,:\hfill,\hfill)\ ,\qquad
\young(\hfill\hfill\hfill,\hfill\hfill)\ ,
\]
and here are their reverses:
\[
\young(:::\hfill,:::\hfill,\hfill\hfill\hfill)\ , \qquad
\young(:::\hfill,:\hfill\hfill\hfill,\hfill)\ ,\qquad
\young(:::\hfill,::\hfill,\hfill\hfill\hfill)\ ,\qquad
\young(:\hfill\hfill,\hfill\hfill\hfill)\ .
\]
\end{example}

The proof of Theorem~\ref{t.two_descents} will follow from two lemmas. The first one, which is of independent interest, establishes the ``only if'' direction.

\begin{lemma}\label{t.minmax_Des}
Let $\s$ be a skew shape with $n$ cells.
Let $c$ (respectively, $r$) be the maximum length of a column (respectively, row) in $\s$.
Then:
\begin{enumerate}
\item
The minimum and maximum values in $D(\s)$ are 
\[
\min(D(\s)) = c - 1, \qquad
\max(D(\s)) = (n-1) - (r - 1).
\]
In particular, $(c - 1) + (r - 1) \le n - 1$.
\item
If $(c - 1) + (r - 1) = n - 1$ 
(equivalently, $D(\s)$ consists of a single value),
then either $\s$ or its reverse is a hook.
\item
If $(c - 1) + (r - 1) = n - 2$ 
(equivalently, $D(\s)$ consists of two consecutive values),
then either $\s$ or its reverse is a near-hook.
\end{enumerate}
\end{lemma}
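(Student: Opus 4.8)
The plan is to prove part 1 first, since parts 2 and 3 then reduce, via part 1, to a purely combinatorial classification of the skew shapes with $(c-1)+(r-1)$ equal to $n-1$ or $n-2$.

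\textbf{Part 1.} For the lower bound $\min(D(\s))\ge c-1$, I would fix a column of maximal length $c$. Since a column of a skew shape occupies a contiguous block of rows, its entries $w_1<\dots<w_c$ (read top to bottom) satisfy $\operatorname{row}(w_{j+1})=\operatorname{row}(w_j)+1$. Because the row index increases by one across the value interval $\{w_j,w_j+1,\dots,w_{j+1}\}$, that interval contains at least one descent; the $c-1$ intervals are disjoint, forcing at least $c-1$ descents. For achievability of $c-1$, I would rectify: the same column argument combined with Theorem~\ref{LR_jdt} shows that every straight shape $\nu$ with $c^\lambda_{\mu\nu}>0$ has at least $c$ rows, while some such $\nu$ has exactly $c$ rows; its row-superstandard tableau $P$ has exactly $c-1$ descents, and any $T\in\SYT(\s)$ with $\Jdt(T)=P$ satisfies $\card{\Des(T)}=\card{\Des(P)}=c-1$ by Lemma~\ref{lem:jdtDes}. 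The maximum then follows from the transpose involution $T\mapsto T^t$, which complements descent sets inside $[n-1]$ and exchanges the roles of $c$ and $r$: applying the minimum formula to $(\s)^t$ gives $\max(D(\s))=(n-1)-(r-1)$. The inequality $(c-1)+(r-1)\le n-1$ is simply $\min(D(\s))\le\max(D(\s))$. The stated equivalences are then immediate: since both extremes are attained and $D(\s)$ lies between them, $(c-1)+(r-1)=n-1$ holds iff $\min=\max$ iff $D(\s)$ is a single value, and $(c-1)+(r-1)=n-2$ holds iff $\max-\min=1$ iff $D(\s)$ consists of exactly two consecutive values.

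\textbf{Parts 2 and 3.} These reduce to classifying shapes with $c+r=n+1$ (part 2) and $c+r=n$ (part 3). Fix a row $R$ of length $r$ and a column $C$ of length $c$. If $c+r=n+1$, a \emph{disjoint} pair $R,C$ would cover $c+r=n+1>n$ cells, which is impossible; hence every maximal row meets every maximal column, so $R\cap C\ne\varnothing$ and $R\cup C$ covers $c+r-1=n$ cells, i.e.\ all of $\s$. A single row and a single column that cross and fill out a skew shape must meet at a corner; I would verify this from the requirement that the left edges $\mu_i$ and right edges $\lambda_i$ be weakly decreasing, which forces the arm and leg to emanate from one common corner. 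Thus $\s$ is a hook, or (after $180^\circ$ rotation) its reverse.

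For part 3 ($c+r=n$) I would split into two cases. If all maximal rows and columns are pairwise disjoint, then $R\cup C$ already covers $c+r=n$ cells, so $\s$ is a disjoint union of one row and one column; checking validity as a skew shape (contiguity of rows and columns, monotonicity of $\mu,\lambda$) leaves only $(1^k)\oplus(n-k)$ and its reverse, which is form (a). If some maximal $R$ and $C$ intersect, then $R\cup C$ is again a corner hook covering $n-1$ cells, leaving exactly one extra cell $z$, and I would enumerate the positions of $z$ keeping $\s$ a skew shape without lengthening $R$ or $C$. Monotonicity of $\mu,\lambda$ restricts $z$ to: the cell just inside the corner, giving the internal form (c), $(n-k,2,1^{k-2})$; a single cell detached beyond the end of the arm or below the foot of the leg, giving the disconnected forms (b); or a cell abutting an end of the arm or leg, which produces a \emph{connected} shape containing no $2\times2$ block. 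Up to transpose and reverse, the first two families are exactly the near-hooks (b) and (c), and together with form (a) they give the claim.

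\textbf{Main obstacle.} The crux is the last enumeration: deciding precisely which single-cell additions to a corner hook, and which disjoint row/column placements, yield a valid skew shape with $c$ and $r$ unchanged, and matching these to forms (a)--(c) up to reverse. The delicate point is the family of connected borderline additions, where $z$ abuts an end of the arm or leg: these are exactly the thin (ribbon) configurations, and the argument must carefully separate them from the genuine near-hooks (they carry no $2\times2$ block, unlike form (c), and are connected, unlike forms (a) and (b)). Handling this monotonicity-and-contiguity bookkeeping cleanly, so that the surviving configurations are precisely the listed near-hooks, is where the real work lies.
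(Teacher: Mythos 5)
Your parts 1 and 2 follow the paper's line, but the proposal has two gaps, one of which is fatal. In part 1 the lower bound $\card{\Des(T)} \ge c-1$ is argued essentially as in the paper, and deducing the maximum by transposition is also what the paper does. For achievability, however, you route through rectification and simply assert that ``some $\nu$ with $c^\lambda_{\mu\nu}>0$ has exactly $c$ rows.'' By your own preceding argument this assertion is \emph{equivalent} to the existence of a tableau in $\SYT(\s)$ with exactly $c-1$ descents (pull the row-superstandard tableau of such a $\nu$ back through $\Jdt$ and apply Lemma~\ref{lem:jdtDes}), so as written the step is circular: the entire content of achievability is hidden in it. The paper closes this by an explicit construction, filling the shape along ``skew rows'' (the top cells of all columns form the first skew row, then iterate), which visibly has exactly $c-1$ descents; some such construction is indispensable and cannot be replaced by citing Theorem~\ref{LR_jdt}.

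In part 3 your enumeration is more careful than the paper's, and it exposes a real problem: the third family you identify --- a cell abutting the tip of the arm or the foot of the leg in the adjacent row or column --- is not an artifact to be ``carefully separated'' from the near-hooks; it consists of honest skew shapes that satisfy the hypothesis but not the conclusion. For instance $(2,2,1)/(1)$ (and more generally $(r,r,1^{m})/(r-1)$ with $m\ge 1$) is a connected ribbon with maximal row length $r=2$, maximal column length $c=2$ and $n=4$ cells, so $(c-1)+(r-1)=n-2$ and one checks directly that $D(\s)=\{1,2\}$; yet neither this shape nor its reverse (it is self-reverse) is of the forms (a), (b), (c), being connected and containing no $2\times 2$ block. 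So these configurations cannot be eliminated, and the classification cannot be completed along your proposed lines without appending ``or $\s$ is a connected ribbon'' to the conclusion. The paper's own proof dismisses this case with ``a skew shape argument as in part 2 shows that $\s$ is a hook plus an internal cell,'' but that closure argument needs two cells in positions $(i,j)$, $(i',j')$ with $i<i'$ \emph{and} $j<j'$, which these ribbons do not supply; the exceptional family is harmless for the paper's later results only because connected ribbons admit no cyclic descent map (Proposition~\ref{prop:ribbons}). Your instinct in the ``main obstacle'' paragraph was correct, but what you found is a counterexample to the statement as written, not a bookkeeping chore.
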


\begin{proof}
\begin{enumerate}
\item
Let us first show that, for any $T \in \SYT(\s)$, $\card{\Des(T)} \ge c - 1$. Indeed, choose a column of length $c$ in $\s$, and let $a_1 < \ldots < a_c$ be the entries of $T$ in this column. 
For each $1 \le i \le c-1$, the sequence $a_i, a_i+1, \ldots, a_{i+1}-1$ must contain at least one descent of $T$, since otherwise each of these numbers would appear in a cell strictly west and weakly south of its successor, forcing $a_i$ to be strictly west and weakly south of $a_{i+1}$, which is impossible. We have thus established the existence of $c-1$ distinct elements in $\Des(T)$.

Let us now construct a tableau $T \in \SYT(\s)$ for which $\card{\Des(T)} = c - 1$. As a guiding example, if $\s$ is a straight shape (i.e., if $\mu$ is empty), fill the shape row by row, from top to bottom, with each row filled consecutively from left to right. Note that $c$ is then the number of rows in $\lambda$. Similarly, for a general (possibly disconnected) skew shape $\s$, define {\it skew rows} as follows. The first skew row consists of the top cells of all the columns in $\s$.
Removing these cells leaves another skew shape, for which we iterate the same procedure to define the second skew row, and so on. Now fill these skew rows from top to bottom, with each skew row filled consecutively from left to right. Since the length of each column decreases by $1$ at each step (as long as it is still nonzero), the number of skew rows is exactly $c$. The descents of the resulting tableau $T$ are the entries in the last cell of each skew row (except the last). Thus $\card{\Des(T)} = c - 1$ and $\min(D(\s)) = c - 1$ as claimed.

The argument for $\max(D(\s))$ is similar; alternatively, it follows by considering the conjugate shape, and noting that $\Des(T^t) = [n-1] \setminus \Des(T)$.
Finally, the inequality $(c - 1) + (r - 1) \le n - 1$ follows from the inequality $\min(D(\s)) \le \max(D(\s))$. 

\item
If $(c-1)+(r-1) = n-1$, consider a column of length $c$ and a row of length $r$ in $\s$. Since $c+r = n+1$, they must have a (unique) cell in common, and their union contains all the cells of $\s$.
If the positive integers $i < i'$ and $j < j'$ are such that the cells $(i,j)$ and $(i',j')$ belong to the skew shape $\s$, then so do $(i,j')$ and $(i',j)$. We deduce that the common cell is either the top cell of the column as well as the leftmost cell of the row, or it is the bottom cell of the column as well as the rightmost cell of the row. Thus either $\s$ or its reverse is a hook.

\item
If $(c-1)+(r-1) = n-2$, consider a column of length $c$ and a row of length $r$. If they do not intersect, then their union contains all the cells in $\s$, and they must belong to distinct connected components of $\s$. It follows that either $\s$ or its reverse is a hook minus its corner cell (when the components are taken to be as close together as possible).
If the column and row do intersect (necessarily in a single cell), then their union has size $n-1$, and there is a unique cell in $\s$ outside the union. If this cell belongs to a different connected component, then the other component is a hook or its reverse, as before. If it belongs to the same connected component, then a skew shape argument as in part 2 above shows that either $\s$ or its reverse is a hook plus an internal cell.\qedhere
\end{enumerate}
\end{proof}

To prove the ``if'' direction of Theorem~\ref{t.two_descents}, we need a detailed description of the descent set in each case. For a set $C$, recall the notation $C-1=\{c-1: c\in C\}$.

\begin{lemma}\label{t.near-hooks}
Assume that either $\s$ or its reverse is a near-hook with $n$ cells, and
let $k$ be as given in each case of Theorem~\ref{t.two_descents}.2. Then
$$D(\s) = \{k-1,k\}.$$ Additionally, for $T\in \SYT(\s)$, $\Des(T)$ is given explicitly in each case as follows.

\begin{enumerate}[(a)]
\item
{\em Hook minus its corner cell:} Let $C$ be the set of entries in the column $(1^k)$ of $T$.
$$\begin{array}{c|c|c}
\s & \Des(T) & \card{\Des(T)}=k-1 \text{ iff} \\ \hline \hline
(1^k) \oplus (n-k) & (C \setminus \{1\})-1 & 1 \in C \\ \hline
((1^k) \oplus (n-k))^{\rev} & C \setminus \{n\} & n \in C
\end{array}$$

\item 
{\em Hook plus a disconnected cell:}  
Let $T_0$ be the entry in the disconnected cell of $T$, and let $C$ be the set of entries in the column $(1^{k-1})$ (excluding the corner cell).
$$\begin{array}{c|c|c}
\s & \Des(T) & \card{\Des(T)}=k-1 \text{ iff} \\ \hline \hline

(n-k,1^{k-1}) \oplus (1) & \begin{cases}
C-1& \text{if } T_0 \in (C-1) \sqcup \{n\}, \\
(C-1) \sqcup \{T_0\}& \text{otherwise}
\end{cases} & T_0 \in (C-1) \sqcup \{n\} \\ \hline

((n-k,1^{k-1}) \oplus (1))^{\rev} & \begin{cases}
C& \text{if } T_0 \in (C+1) \sqcup \{1\}, \\
C \sqcup \{T_0-1\}& \text{otherwise}
\end{cases} & T_0 \in (C+1) \sqcup \{1\}  \\ \hline

(1) \oplus (n-k,1^{k-1}) & \begin{cases}
C-1& \text{if } T_0 = 1, \\
((C-1) \setminus \{T_0\}) \sqcup \{T_0-1\}& \text{if } T_0 \in C-1, \\
(C-1) \sqcup \{T_0-1\}& \text{otherwise}
\end{cases} & T_0 \in (C-1) \sqcup \{1\} \\ \hline

((1) \oplus (n-k,1^{k-1}))^{\rev} & \begin{cases}
C& \text{if } T_0 = n, \\
(C \setminus \{T_0-1\}) \sqcup \{T_0\}& \text{if } T_0 \in C+1, \\
C \sqcup \{T_0\}& \text{otherwise}
\end{cases} & T_0 \in (C+1) \sqcup \{n\}
\end{array}$$

\item
{\em Hook plus an internal cell:} 
Let $T_{\inte}=T_{2,2}$ be the entry in the internal cell of $T$, and let $C$ be the set of entries in the column $(1^{k-1})$ (excluding the corner cell).
$$\begin{array}{c|c|c}
\s & \Des(T) & \card{\Des(T)}=k-1 \text{ iff} \\ \hline \hline
(n-k,2,1^{k-2}) & \begin{cases}
C-1& \text{if } T_{\inte} \in C+1, \\
(C-1) \sqcup \{T_{\inte}-1\}& \text{otherwise}
\end{cases} & T_{\inte} \in C+1 \\ \hline
(n-k,2,1^{k-2})^{\rev} & \begin{cases}
C& \text{if } T_{\inte} \in C-1, \\
C \sqcup \{T_{\inte}\}& \text{otherwise}
\end{cases} & T_{\inte} \in C-1
\end{array}$$

\end{enumerate}
\end{lemma}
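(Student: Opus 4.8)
The whole lemma rests on one elementary observation: for $T\in\SYT(\s)$ one has $i\in\Des(T)$ if and only if the entry $i+1$ sits in a strictly lower row than $i$, so $\Des(T)$ is determined entirely by the assignment of entries to rows. In each near-hook the rows fall into a few horizontal \emph{bands}: the row of the horizontal arm (the highest band), the successive one-cell rows of the vertical arm (on which entries increase strictly downward), and, in cases (b) and (c), one extra band holding the disconnected cell (above or below the hook) or the internal cell $T_{\inte}=T_{2,2}$ (in the second row). The plan is to read off $\Des(T)$ band by band. The vertical arm contributes, for each of its entries $c$, the descent $c-1$ whenever the cell holding $c$ lies strictly below that holding $c-1$; this accounts for the set $C-1$ up to boundary effects, and everything reduces to deciding, in each case, the single possible correction coming from the extra cell. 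Since in each case $\Des(T)$ will differ from $C-1$ by at most one element, we get $\card{\Des(T)}\in\{k-1,k\}$ immediately.

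I would begin with the two easiest cases. For (a), a hook minus its corner, the shape is the disjoint union of a high row and a low column, and the observation gives directly $i\in\Des(T)\iff i+1\in C$, i.e.\ $\Des(T)=(C\setminus\{1\})-1$, with $\card{\Des(T)}=k-1$ exactly when $1\in C$. For (c), a hook plus the internal cell $T_{\inte}$, the only possible extra descent is at $T_{\inte}-1$, and it occurs precisely when $T_{\inte}-1$ lies in the horizontal arm. I would show that the smallest vertical-arm entry $c_{\min}$ always has $c_{\min}-1$ in the horizontal arm (since $c_{\min}-1$ is too small to lie in the arm below it or in the cell $T_{\inte}>c_{\min}$), so $C-1\subseteq\Des(T)$; and that $T_{\inte}-1$ lies in the horizontal arm iff $T_{\inte}-1\notin C$, i.e.\ iff $T_{\inte}\notin C+1$. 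Assembling gives $\Des(T)=C-1$ when $T_{\inte}\in C+1$ and $\Des(T)=(C-1)\sqcup\{T_{\inte}-1\}$ otherwise, the union being disjoint because distinct cells carry distinct entries, so $T_{\inte}\notin C$.

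Case (b), a hook plus a disconnected cell, is where the real bookkeeping lies and is the main obstacle. Here the extra cell occupies its own band, which is the highest band for $(n-k,1^{k-1})\oplus(1)$ and the lowest band for $(1)\oplus(n-k,1^{k-1})$, and the behaviour of the corrections differs accordingly. For the ``cell-above'' shape, $T_0-1$ is never a descent (nothing lies above the cell of $T_0$), while $T_0$ is a descent unless $T_0=n$; comparing with the baseline $C-1$ produces the two tabulated cases ($\Des(T)=C-1$ when $T_0\in(C-1)\sqcup\{n\}$, and $(C-1)\sqcup\{T_0\}$ otherwise). For the ``cell-below'' shape, $T_0-1$ is always a descent (when $T_0\ge2$), but now the value $c\in C$ with $c-1=T_0$, if present, loses its descent because $c$ no longer lies below $T_0$; this is the single deviation from $C-1\subseteq\Des(T)$ and yields the three tabulated cases. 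In both shapes I would confirm disjointness of the displayed unions from $T_0\notin C$, and record the two boundary facts that make the case lists exhaustive and mutually exclusive: $T_0=1$ forces $2\notin C$ and $T_0=n$ forces $n-1\notin C$, because the value $1$ (resp.\ $n$) cannot simultaneously occupy the disconnected cell and be the neighbour above (resp.\ below) a vertical-arm entry.

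Finally, the four ``reverse'' rows of the tables can be obtained either by repeating the same band analysis for the rotated shapes, or more efficiently from their partners via the involution $T\mapsto T^{\rev}$ and the identity $i\in\Des(T^{\rev})\iff n-i\in\Des(T)$: translating each direct formula through this relation, and through the induced correspondence on the distinguished cells, exchanges the boundary roles of $1$ and $n$ and reproduces the stated reverse formula (I would verify this explicitly, as a check, for the reverse of (a)). It then remains to record $D(\s)=\{k-1,k\}$. Rather than exhibit tableaux by hand, I would invoke Lemma~\ref{t.minmax_Des}: every near-hook (and its reverse) has longest column of length $k$ and longest row of length $n-k$, so $\min D(\s)=c-1=k-1$ and $\max D(\s)=n-r=k$; these are attained by definition and, being consecutive, force $D(\s)=\{k-1,k\}$, completing the proof.
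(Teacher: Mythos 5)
Your proposal is correct and takes essentially the same route as the paper, which simply declares the table a ``straightforward case-by-case verification,'' omits the details, and records only the two boundary impossibilities ($T_0=1\in C-1$ for $(1)\oplus(n-k,1^{k-1})$ and $T_0=n\in C+1$ for its reverse) that you also identify; your band-by-band analysis, the reduction of the reversed shapes via $\Des(T^{\rev})=n-\Des(T)$, and the appeal to Lemma~\ref{t.minmax_Des} to get $D(\s)=\{k-1,k\}$ are all sound. One cosmetic slip: in case (c) the containment $C-1\subseteq\Des(T)$ does not follow from the location of $c_{\min}-1$ alone --- one must also check the subcase $c-1=T_{\inte}$ for $c\in C$, where $c>T_{2,2}>T_{2,1}$ places $c$ strictly below row $2$ --- but this is immediate and does not affect the argument.
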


\begin{proof}
The equality $D(\s) = \{k-1,k\}$ follows from the detailed description of $\Des(T)$ in each case, which can be proved 
by a straightforward case-by-case verification. Details are omitted, but
we remark that if $\s = (1) \oplus (n-k,1^{k-1})$, it is impossible to have $T_0 = 1 \in C-1$; and if $\s = ((1) \oplus (n-k,1^{k-1}))^{\rev}$, it is impossible to have $T_0 = n \in C+1$.
\end{proof}

\begin{proof}[Proof of Theorem~\ref{t.two_descents}]
\begin{enumerate}
\item
The ``only if'' direction follows from Lemma~\ref{t.minmax_Des}.2.
The ``if'' direction is clear by noting that $D(\s) = \{k\}$ if either $\s$ or its reverse is the hook $(n-k,1^k)$.
\item
The ``only if'' direction follows from Lemma~\ref{t.minmax_Des}.3,
and the ``if'' direction is given by Lemma~\ref{t.near-hooks}.\qedhere
\end{enumerate}
\end{proof}

Having shown that $D(\s)=\{k-1,k\}$ for some $1\le k\le n-1$ when $\s$ or its reverse is a near-hook, Lemma~\ref{t.unique_cDes} guarantees that there is a unique cylic descent map for such shapes. The explicit conditions determining whether $n \in \cDes(T)$ for $T$ of such shapes can now be extracted from Lemma~\ref{t.near-hooks}.

\begin{corollary}\label{t.n_in_cDes}
Assume that either $\s$ or its reverse is a near-hook with $n$ cells, 
let $k$ be as given in each case of Theorem~\ref{t.two_descents}.2, and
let $T\in\SYT(\s)$. Then:
\begin{enumerate}
\item
There exists a unique cyclic descent map for $\s$, specified by letting
\[
n \in \cDes(T) \iff \card{\Des(T)} = k-1.
\]
In particular, $\card{\cDes(T)}=k$.

\item
Explicitly, the condition for $n\in\cDes(T)$ is given in each case as follows, where $C$, $T_0$ and $T_{\inte}$ are defined as in Lemma~\ref{t.near-hooks}.
\[
\begin{array}{l|c|c}
& \s & n\in \cDes(T) \text{ iff} \\ \hline \hline
\text{(a) {\em Hook minus its corner cell:}} & (1^k) \oplus (n-k) & 1 \in C \\ \cline{2-3}
& ((1^k) \oplus (n-k))^{\rev} & n \in C \\ \hline
\text{(b) {\em Hook  plus a disconnected cell:}} & (n-k,1^{k-1}) \oplus (1) & T_0 \in (C-1) \sqcup \{n\} \\ \cline{2-3}
& ((n-k,1^{k-1}) \oplus (1))^{\rev} & T_0 \in (C+1) \sqcup \{1\} \\ \cline{2-3}
& (1) \oplus (n-k,1^{k-1}) & T_0 \in (C-1) \sqcup \{1\} \\ \cline{2-3}
& ((1) \oplus (n-k,1^{k-1}))^{\rev} & T_0 \in (C+1) \sqcup \{n\} \\ \hline
\text{(c) {\em Hook plus an internal cell:}} & (n-k,2,1^{k-2}) & T_{\inte} \in C+1 \\ \cline{2-3}
& (n-k,2,1^{k-2})^{\rev} & T_{\inte} \in C-1 
\end{array}
\]
\end{enumerate}
\end{corollary}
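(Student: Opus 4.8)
The plan is to obtain Corollary~\ref{t.n_in_cDes} as a direct consequence of the two preceding lemmas, with essentially no new computation: Lemma~\ref{t.near-hooks} (together with Theorem~\ref{t.two_descents}.2) supplies the hypothesis $D(\s) = \{k-1,k\}$, while Lemma~\ref{t.unique_cDes} converts this into the uniqueness statement and the size characterization of $\cDes$. For part~1 I would first verify that every near-hook is a \emph{non-ribbon} skew shape, so that Lemma~\ref{t.unique_cDes} (which rests on Theorem~\ref{conj1}) applies. This is immediate in the disconnected cases (a) and (b), since a connected ribbon is by definition connected; and in case (c) the shape $(n-k,2,1^{k-2})$ contains the $2\times 2$ block on the cells $(1,1),(1,2),(2,1),(2,2)$ because $n-k \ge 2$ (as $k \le n-2$), so it is not a ribbon either.

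Next I would record that $D(\s)$ is unchanged under reversal: the bijection $T \mapsto T^{\rev}$ satisfies $i \in \Des(T^{\rev}) \iff n-i \in \Des(T)$, hence $\card{\Des(T^{\rev})} = \card{\Des(T)}$ and $D(\s) = D((\s)^{\rev})$. Thus it makes no difference whether $\s$ itself or its reverse is the near-hook: in all cases Theorem~\ref{t.two_descents}.2 gives $D(\s) = \{k-1,k\}$ for the stated $k$. Feeding this into Lemma~\ref{t.unique_cDes} yields at once the existence and uniqueness of $\cDes$, the equivalence $n \in \cDes(T) \iff \card{\Des(T)} = k-1$, and the constant value $\card{\cDes(T)} = k$, which is exactly part~1.

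For part~2 I would simply translate Lemma~\ref{t.near-hooks}. By part~1, $n \in \cDes(T)$ holds precisely when $\card{\Des(T)} = k-1$, and the last column of each table in Lemma~\ref{t.near-hooks} records the condition under which $\card{\Des(T)} = k-1$ (in terms of $C$, $T_0$, and $T_{\inte}$, which are defined identically in the two statements). Reading these entries off case by case — matching each shape in the corollary's table with the corresponding row of Lemma~\ref{t.near-hooks}, and each reverse shape with its reverse row — reproduces the claimed table verbatim.

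The genuine mathematical content has already been spent in Lemmas~\ref{t.minmax_Des}, \ref{t.near-hooks} and \ref{t.unique_cDes}; the corollary is a bookkeeping step. Accordingly the only mild obstacle is clerical: ensuring that the non-ribbon hypothesis is checked in every case (in particular the internal-cell case (c), the only connected near-hook type) and that the correspondence between the rows of the corollary's table and those of Lemma~\ref{t.near-hooks}, including the reverse shapes, is transcribed without error.
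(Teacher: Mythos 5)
Your proposal is correct and follows essentially the same route as the paper, which derives the corollary directly from Lemma~\ref{t.unique_cDes} (applied with $D(\s)=\{k-1,k\}$ from Theorem~\ref{t.two_descents}.2) and then reads off the explicit conditions from the last column of the tables in Lemma~\ref{t.near-hooks}. Your extra checks — that every near-hook is a non-ribbon shape (disconnected in cases (a) and (b); containing a $2\times 2$ block in case (c) since $k\le n-2$) and that $D(\s)$ is invariant under reversal — are details the paper leaves implicit, and they are verified correctly.
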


Case (a) will be generalized in Section~\ref{sec:strips};
case (c) is Theorem~\ref{thm:hooks_plus1_intro} above. 

\smallskip

Two interesting symmetries for cyclic descents of near-hooks emerge from Lemma~\ref{t.near-hooks} and Corollary~\ref{t.n_in_cDes}. 
They correspond to symmetries for descents of arbitrary skew shapes.

\begin{corollary}\label{t.transpose_reverse}
Let $\s$ be a skew shape with $n$ cells and let $T \in \SYT(\s)$. Then
\[
\Des(T^t) = [n-1] \setminus \Des(T),
\qquad
\Des(T^{\rev}) = n - \Des(T).
\]
If, additionally, either $\s$ or its reverse is a near-hook, then
the (unique) cyclic descent set of the transpose and the reverse tableaux are
\[
\cDes(T^t) = [n] \setminus \cDes(T),
\qquad
\cDes(T^{\rev}) = n - \cDes(T),
\]
where $0$ is interpreted as $n \pmod n$.
\end{corollary}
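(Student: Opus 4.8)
The plan is to prove the four identities in two groups: the descent identities for arbitrary skew shapes, and then the cyclic descent identities for near-hooks, which follow by combining the descent identities with the uniqueness result.

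First I would establish the two descent identities, which hold for \emph{every} skew shape and are essentially restatements of facts already recorded in the excerpt. The transpose identity $\Des(T^t) = [n-1] \setminus \Des(T)$ follows immediately from the observation made just before Lemma~\ref{t.minmax_Des}: for $i \in [n-1]$, we have $i \in \Des(T^t)$ if and only if $i \not\in \Des(T)$, since reflecting along the main diagonal interchanges ``$i+1$ in a lower row than $i$'' with ``$i+1$ in a column weakly to the right,'' i.e.\ ``$i+1$ not in a lower row.'' (One should note that $i$ and $i+1$ can never lie in the same row \emph{and} the same column, so exactly one of the two conditions holds.) The reverse identity $\Des(T^{\rev}) = n - \Des(T)$ is likewise exactly the statement ``$i \in \Des(T^{\rev})$ if and only if $n-i \in \Des(T)$'' recorded in the same paragraph; here $n - \Des(T)$ means $\{n-i : i \in \Des(T)\}$, a subset of $[n-1]$. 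So this first group requires only careful bookkeeping with the definitions.

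Next I would handle the cyclic descent identities, where the key input is the uniqueness established in Corollary~\ref{t.n_in_cDes}.1. The strategy is: since $\cDes$ restricts to $\Des$ on $[n-1]$, it suffices to determine whether $n \in \cDes(T^t)$ and $n \in \cDes(T^{\rev})$, and then to match these against the claimed right-hand sides. For the transpose, I would first check that if either $\s$ or its reverse is a near-hook, then the same is true of $\s^t$ (the transpose shape), so that $\cDes$ is well-defined and unique on $\SYT(\s^t)$ by Corollary~\ref{t.n_in_cDes}.1. By Lemma~\ref{t.unique_cDes}, the unique cyclic descent map is characterized by $\card{\cDes(T)} = k$ for all $T$, where $\{k-1,k\} = D(\s)$. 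The transpose identity $\Des(T^t) = [n-1]\setminus\Des(T)$ shows that $\card{\Des(T^t)} = (n-1) - \card{\Des(T)}$, so that the two-element descent set $D(\s^t)$ is $\{(n-1)-k, (n-1)-(k-1)\} = \{n-1-k, n-k\}$, with associated cyclic descent cardinality $n-k$. Combining $\card{\cDes(T)} = k$ and $\card{\cDes(T^t)} = n-k$ with the two descent restrictions, I would verify that $\cDes(T^t)$ and $[n]\setminus\cDes(T)$ have the same intersection with $[n-1]$ and the same cardinality, and hence agree: specifically, $n \in \cDes(T^t)$ exactly when $n \notin \cDes(T)$, which one sees by a cardinality count (if both contained $n$ or both omitted it, the cardinalities $k$ and $n-k$ would be inconsistent with the complementary restrictions to $[n-1]$). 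The reverse identity is analogous: I would check that $(\s)^{\rev}$ is again a near-hook (or reverse of one) with the same value of $k$, since reversing preserves the descent-size multiset shifted by $n$, and then use $\Des(T^{\rev}) = n - \Des(T)$ together with $\card{\cDes(T^{\rev})} = k = \card{\cDes(T)}$ to force $n \in \cDes(T^{\rev})$ iff $n \in \cDes(T)$ (using the convention $0 \equiv n$), yielding $\cDes(T^{\rev}) = n - \cDes(T)$.

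The main obstacle I anticipate is the bookkeeping around the element $n$ itself in the cyclic identities, rather than any conceptual difficulty. In the transpose case, the clean complementation $[n]\setminus\cDes(T)$ requires confirming that the cardinalities work out so that $n$ lies in exactly one of $\cDes(T)$, $\cDes(T^t)$; this is where the assumption that $\s$ (or its reverse) is a near-hook is genuinely used, via Lemma~\ref{t.unique_cDes}, to pin down $\card{\cDes} = k$ and $\card{\cDes(T^t)} = n-k$ and thereby $k + (n-k) = n$, consistent with a perfect complementation across all of $[n]$. In the reverse case, the subtlety is the wrap-around: $n - \Des(T) \subseteq [n-1]$ handles the elements of $[n-1]$, but the element $n$ of $\cDes$ maps under $i \mapsto n-i$ to $0 \equiv n \pmod n$, so one must check that the convention ``$0$ is interpreted as $n$'' correctly forces the equivalence $n \in \cDes(T^{\rev}) \iff n \in \cDes(T)$ rather than a complementation. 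I would be careful to verify this fixed-point behavior of $n$ under reversal explicitly, as it is the one place where a sign or parity slip could invert the statement.
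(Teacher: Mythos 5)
Your proof is correct, but it takes a genuinely different route from the paper's. The paper presents this statement as an immediate consequence of the explicit case-by-case descriptions of $\Des(T)$ and of the condition ``$n \in \cDes(T)$'' in Lemma~\ref{t.near-hooks} and Corollary~\ref{t.n_in_cDes}: one simply reads off, for each of the near-hook families, that the formulas for the transpose and the reverse are the complement and the negation of the original. You instead bypass all case analysis and argue abstractly from the uniqueness characterization of Lemma~\ref{t.unique_cDes}: since the unique cyclic descent map on a shape with $D(\s)=\{k-1,k\}$ is pinned down by $\card{\cDes}\equiv k$, the classical symmetries $\Des(T^t)=[n-1]\setminus\Des(T)$ and $\Des(T^{\rev})=n-\Des(T)$ determine $D(\s^t)=\{n-k-1,n-k\}$ and $D(\s^{\rev})=\{k-1,k\}$, and a cardinality count then forces $n\in\cDes(T^t)\iff n\notin\cDes(T)$ and $n\in\cDes(T^{\rev})\iff n\in\cDes(T)$. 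This is sound; the one hypothesis you correctly flag as needing verification --- that the transpose of a near-hook is again a near-hook or the reverse of one --- does hold by inspection of the three families (hook minus corner, hook plus disconnected cell, hook plus internal cell are each carried to a shape in the same list with $k$ replaced by roughly $n-k$), and closure under reversal is built into the statement of Theorem~\ref{t.two_descents}. Your approach is more conceptual and makes transparent exactly where the near-hook hypothesis enters (namely, in fixing the constant cardinality of $\cDes$), whereas the paper's approach yields the explicit per-case formulas as a byproduct; both are valid.
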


Finally, we compute the multiplicities $\card{\cDes^{-1}(J)}$ for each $J \subseteq [n]$ with $|J| = k$, for the various types of near-hooks.
A {\em cyclic run} of a set $\varnothing \subsetneq J \subsetneq [n]$ is a maximal cyclic interval $\{i,i+1,\ldots,i+r\} \pmod n$ contained in $J$.

\begin{theorem}\label{t.near-hook:fibers}
Assume that either $\s$ or its reverse is a near-hook with $n$ cells, and let $k$ be such that $\card{\cDes(T)} = k$ for every $T \in \SYT(\s)$ (which exists by Corollary~\ref{t.n_in_cDes}.1). For every $J \subseteq [n]$ with $|J| = k$, 
$\card{\cDes^{-1}(J)}$ is given in each case as follows.
\begin{enumerate}[(a)]
\item
{\em Hook minus its corner cell:}
If $\s$ is either $(1^k) \oplus (n-k)$ or its reverse, then
$\card{\cDes^{-1}(J)} = 1$.

\item
{\em Hook plus a disconnected cell:}
If $\s$ is one of $(n-k,1^{k-1}) \oplus (1)$, $(1) \oplus (n-k,1^{k-1})$, or their reverses, then
$\card{\cDes^{-1}(J)}$ equals the number of cyclic runs in $J$.

\item
{\em Hook plus an internal cell:}
If $\s$ is either $(n-k,2,1^{k-2})$ or its reverse,
then $\card{\cDes^{-1}(J)}$ equals the number of cyclic runs in $J$ minus $1$.
\end{enumerate}
\end{theorem}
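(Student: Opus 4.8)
The plan is to read off the explicit description of $\cDes(T)$ provided by Corollary~\ref{t.n_in_cDes} together with Lemma~\ref{t.near-hooks}, and to count fibers directly. Since the cyclic descent map is unique and satisfies $\card{\cDes(T)}=k$ for all $T\in\SYT(\s)$, computing $\card{\cDes^{-1}(J)}$ means counting, for each fixed $J$ with $\card J=k$, the tableaux realizing $\cDes(T)=J$. In each family I would parametrize $\SYT(\s)$ by the small amount of data appearing in Lemma~\ref{t.near-hooks} (the column content $C$ together with $T_0$, resp.\ $T_{\inte}$), express $\cDes(T)$ through that data, and then split the count according to whether $n\in J$ — equivalently, according to which branch of the case distinction in Lemma~\ref{t.near-hooks} is active, since $n\in\cDes(T)$ selects exactly one branch. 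Before any computation I would invoke Corollary~\ref{t.transpose_reverse}: because $J\mapsto n-J$ and $J\mapsto[n]\setminus J$ both preserve the number of cyclic runs, and because transpose and reverse permute the shapes within each of the families (a), (b), (c) while sending $\cDes(T)$ to $n-\cDes(T)$ or $[n]\setminus\cDes(T)$, it suffices to settle one representative shape per family.

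For family (a), shape $(1^k)\oplus(n-k)$, a tableau is determined by its column content $C$ (a $k$-subset of $[n]$), and the two branches of Lemma~\ref{t.near-hooks}(a) collapse to the single formula $\cDes(T)=C-1\pmod n$. Since $C\mapsto C-1$ is a bijection on $k$-subsets of $[n]$, the map $\cDes$ is a bijection onto $\{J:\card J=k\}$, so every fiber is a singleton.

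For family (b), shape $(n-k,1^{k-1})\oplus(1)$, I would parametrize by the pair $(C,T_0)$ and record that Lemma~\ref{t.near-hooks}(b) gives $\cDes(T)=(C-1)\sqcup\{x\}$, where $x=n$ exactly when $n\in J$ and $x=T_0$ otherwise. Fixing $J$, the column content is then determined as $C=(J\setminus\{x\})+1$, and a short computation shows that standardness of $T$ together with $\cDes(T)=J$ reduces, in both branches, to the single requirement that $T_0$ be the minimum of a cyclic run of $J$ (that is, $T_0-1\notin J$ cyclically); moreover this requirement automatically secures standardness. Hence $\card{\cDes^{-1}(J)}$ equals the number of cyclic runs of $J$, consistent with the total count $\sum_{\card J=k}(\text{number of runs})=n\binom{n-2}{k-1}=\card{\SYT(\s)}$.

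Family (c), shape $(n-k,2,1^{k-2})$, is where the main difficulty lies. Parametrizing by $(C,T_{\inte})$ and applying Lemma~\ref{t.near-hooks}(c) leads to the same pool of candidate tableaux indexed by the cyclic runs of $J$, but now the internal cell imposes the extra inequalities $T_{\inte}>\min C$ and $T_{\inte}>T_{1,2}$. The crux is to prove that these inequalities eliminate exactly one run-indexed candidate — the cyclic run situated at the boundary between $n$ and $1$ — while leaving all others valid. Concretely, when $n\notin J$ the candidate attached to the run starting at $\min J$ fails $T_{\inte}>\min C$, and when $n\in J$ the candidate attached to the run containing $n$ fails $T_{\inte}>T_{1,2}$ (or is simply not produced, when that run does not wrap past the $n$–$1$ cut). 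Showing, uniformly across the two branches and the wrap/no-wrap subcases, that precisely one run is lost and that every other run yields a genuine standard tableau is the delicate bookkeeping step; it gives $\card{\cDes^{-1}(J)}=(\text{number of cyclic runs of }J)-1$, again matching the total $n\binom{n-2}{k-1}-\binom nk=\card{\SYT(\s)}$.
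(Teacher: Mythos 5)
Your overall strategy -- parametrize $\SYT(\s)$ by the data $(C,T_0)$ or $(C,T_{\inte})$ from Lemma~\ref{t.near-hooks}, fix $J$, and count the admissible choices -- is exactly the paper's, and your concrete claims all check out, including the one you flag as delicate in case (c): for $(n-k,2,1^{k-2})$ with $n\notin J$ the candidate at $\min J$ fails $T_{\inte}>\min C$ and all others survive, while for $n\in J$ the candidate attached to the wrap-around run is the unique one lost (either because $j+1=n\in J$ kills the branch condition, or because $[1,j]\subseteq J$ kills $T_{\inte}>T_{1,2}$). What you miss is the simplification that makes the paper's case (c) a two-line count rather than a case analysis over wrap/no-wrap configurations: since the multiset $\multiset{\cDes(T)}$ is closed under cyclic rotation and the number of cyclic runs is rotation-invariant, it suffices to verify the fiber formula for $J\subseteq[n-1]$, i.e., $J\not\ni n$ (any $J$ with $|J|=k<n$ rotates to such a set). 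With that reduction only one branch of Lemma~\ref{t.near-hooks} is ever active, and the count collapses to ``choose $T_{\inte}\in J$ with $T_{\inte}+1\notin J$ and $T_{\inte}<\max(J)$'' (in the paper's reversed-shape coordinates), which is visibly the number of runs minus one. So your route is viable and your bookkeeping claims are correct, but the one step you defer is precisely the step the rotation argument would let you skip; as written, the proposal asserts rather than proves the uniform ``exactly one run is lost'' statement across the $n\in J$ subcases, and that verification is the actual content of part (c).
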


\begin{proof}
By Corollary~\ref{t.transpose_reverse}, reversing the shape if needed, we can assume that $(\s)^{\rev}$ is a near-hook. Similarly, transposing the shape if needed, we can reduce case (b) to one shape. Let $T \in \SYT(\s)$.
\begin{enumerate}[(a)]
\item
If $\s = ((1^k) \oplus (n-k))^{\rev}$, then, by Lemma~\ref{t.near-hooks}, $\cDes(T)$ is the set of entries in the column $(1^k)$ of $T$. 
Thus, $\cDes(T)$ uniquely determines $T$.

\item
If $\s = ((n-k,1^{k-1}) \oplus (1))^{\rev}$, then, by Lemma~\ref{t.near-hooks} and Corollary~\ref{t.n_in_cDes},
\[
\cDes(T) = 
\begin{cases}
C \sqcup \{n\}& \text{if } T_0 \in (C+1) \sqcup \{1\}, \\
C \sqcup \{T_0-1\}& \text{otherwise,}
\end{cases}
\]
where $C$ is the set of entries in the column $(1^{k-1})$ of $T$, and $T_0$ is the entry in the disconnected cell.
Since $\cDes$ satisfies Definition~\ref{def:extension}(ii), it suffices to prove the claim for $J \subseteq [n-1]$ (i.e., not containing $n$). 
In this case, $J = C \sqcup \{T_0-1\}$ and $T_0 \not\in (C+1) \sqcup \{1\}$.
Clearly, $C \subseteq [n-1]$ (of size $k-1$) and $T_0-1 \in [n-1] \setminus C$ determine a unique $T \in \SYT(\s)$, provided that $T_0 \not\in C$.
Note that either $T_0 \ne n$ (so that $n$ occupies the bottom-right corner) or $T_0 = n$ (so that $n-1 = T_0-1$ occupies that corner).
Summing up, given $J \subseteq [n-1]$, to construct $T\in\SYT(\s)$ with $\cDes(T)=J$ one can choose $T_0-1 \in J$ subject only to $T_0 \not\in J$, yielding 
\[
\card{\cDes^{-1}(J)}
= \card{\{j \in J \,:\, j+1 \not\in J\}},
\]
which is the number of cyclic runs in $J$.

\item
If $\s = (n-k,2,1^{k-2})^{\rev}$, then, by Lemma~\ref{t.near-hooks} and Corollary~\ref{t.n_in_cDes},
\[
\cDes(T) = 
\begin{cases}
C \sqcup \{n\},& \text{if } T_{\inte} \in C-1; \\
C \sqcup \{T_{\inte}\},& \text{otherwise,}
\end{cases}
\]
where $C$ is the set of entries in the column $(1^{k-1})$ of $T$, and $T_{\inte}$ is the entry in the internal cell.
By Definition~\ref{def:extension}(ii), it suffices to prove the claim for $J \subseteq [n-1]$. 
In this case, $J = C \sqcup \{T_{\inte}\}$ and $T_{\inte} \not\in C-1$.
Clearly $C \subseteq [n-1]$ (of size $k-1$) and $T_{\inte}+1 \in [n] \setminus C$ determine a unique $T \in \SYT(\s)$, provided that $T_{\inte} \not\in C$, $T_{\inte} < \max(C)$ and $T_{\inte} < \max([n-1] \setminus C)$ (given that $n$ must occupy the bottom-right corner).
In fact, the last equality is superfluous, since $T_{\inte} < \max(C)$ implies $T_{\inte} \le n-2$, and the constraint $T_{\inte}+1 \not\in C$ then implies $T_{\inte} < T_{\inte}+1 \le \max([n-1] \setminus C)$.
Summing up, given $J \subseteq [n-1]$, we choose $T_{\inte} \in J$ such that $T_{\inte}+1 \not\in J$ and $T_{\inte} < \max(J)$, yielding 
\[
\card{\cDes^{-1}(J)}
= \card{\{j \in J \,:\, j+1 \not\in J,\, j < \max(J)\}}
= \card{\{j \in J \,:\, j+1 \not\in J\}} - 1,
\]
which is one less than the number of cyclic runs in $J$.\qedhere
\end{enumerate}
\end{proof}

\subsection{The bijection $\mathbf{\bij}$ for near-hooks}
\label{sec:hooks_plus_action} 

The inverse promotion operator $p^{-1}T=\jdt(1+T)$
shifts the cyclic descent set for rectangular shapes (see Section~\ref{sec:prel2}) and
for hooks minus their corner cell (see the more general discussion of strips in Section~\ref{sec:strips} below). Namely, for every SYT $T$ 
of rectangular or strip shape, $\cDes(p^{-1}T)=1+\cDes(T)$, with addition modulo $n$. However, for hooks plus an internal or disconnected cell, this is not always the case.
For example,
\[
\cDes\left(\young(125,34)\right)=\{2,5\},\ \ \ \ \text{but}\ \ \ \ 
\cDes\left(p^{-1}\young(125,34)\right)=\cDes\left(\young(123,45)\right)=\{3,5\}.
\]
A variant of promotion which shifts the cyclic descent set for hooks plus a disconnected cell was described in~\cite{ER16}, see Section~\ref{sec:prel2}.
In this section we present a bijection on SYT of a shape consisting of a hook plus an internal cell, cyclically shifting their (uniquely defined) cyclic descent sets.
Instead of describing a bijection $\bij$ such that $\cDes(\bij T)=1+\cDes(T)$, it will be more convenient to describe its inverse $\psi=\bij^{-1}$, satisfying $\cDes(\psi T)=-1+\cDes(T)$, always with addition modulo $n$, the number of cells.
We emphasize that, even though $\cDes$ is uniquely defined for these shapes, the definition of $\bij$ (or $\psi$) is not unique.

We assume throughout this section that $2\le k\le n-2$. 
A {\em run} of a set $J \subseteq [n-1]$ is a maximal interval $\{i,i+1,\ldots,i+r\}$ contained in $J$. 
In this case, $i$ is called the first letter of the run.
The {\em first run} of $J$ is the one that contains its smallest element.
Note that, since $n \not\in J$, runs are the same as the cyclic runs (defined before Theorem~\ref{t.near-hook:fibers}) when viewing $J$ as a subset of $[n]$.

\begin{lemma}\label{lem:hook2}
Let $T\in \SYT(n-k,2,1^{k-2})$, and let $\cDes$ be defined as in Theorem~\ref{thm:hooks_plus1_intro} (equivalently, Corollary~\ref{t.n_in_cDes}.2(c)).
\begin{itemize}
\item
If	$n\not\in \cDes(T)$, 
then $T_{2,2}-1$ is the first letter in a run of $\cDes(T)$ which is not the first run.

\item
If $n\in \cDes(T)$, then $T_{2,2}-1$ is the first letter in a run of $\cDes(T^t)$ which is not the first run.
\end{itemize}
\end{lemma}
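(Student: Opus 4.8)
The plan is to prove the two bullets in sequence, establishing the first ($n \notin \cDes(T)$) directly from the explicit descent description in Lemma~\ref{t.near-hooks}(c), and then deriving the second ($n \in \cDes(T)$) from the first by a transposition argument, thereby avoiding any separate computation for $T^t$.

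First I would fix notation, writing $d := T_{2,2} - 1 = T_{\inte} - 1$ and letting $C = \{T_{2,1}, \ldots, T_{k,1}\}$ be the entries in the column $(1^{k-1})$ (excluding the corner cell $T_{1,1} = 1$), as in Lemma~\ref{t.near-hooks}(c). By Corollary~\ref{t.n_in_cDes}.2(c), $n \in \cDes(T)$ if and only if $T_{\inte} \in C+1$, i.e.\ $d \in C$. For the first bullet we assume $n \notin \cDes(T)$, so $d \notin C$ and $\cDes(T) = \Des(T) = (C-1) \sqcup \{d\}$. The verification that $d$ is the \emph{first letter} of a run of $\cDes(T)$ is immediate: $d \in \cDes(T)$ by construction, while $d-1 \notin \cDes(T)$ because $d-1 \ne d$ and $d \notin C$ forces $d-1 \notin C-1$. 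To see that this run is \emph{not the first run}, I would note that $\min \cDes(T) = \min(C-1) = T_{2,1} - 1$, since $T_{2,2} > T_{2,1}$ gives $d = T_{2,2}-1 \ge T_{2,1} > T_{2,1}-1$; hence $d$ strictly exceeds the smallest cyclic descent. As $d$ begins its own run (its predecessor being absent from $\cDes(T)$), this run has first letter $d$, whereas the first run has a first letter $\le T_{2,1}-1 < d$, so by uniqueness of the first letter the two runs are distinct. This handles the first bullet; here $C \ne \varnothing$ since $k \ge 2$, so $T_{2,1}$ genuinely exists.

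For the second bullet I would reduce to the first via transposition. The key observations are that the transpose shape $\s^t = (k,2,1^{n-k-2})$ is again a hook plus an internal cell with valid parameters (since $2 \le k \le n-2$ translates to $2 \le n-k \le n-2$), that the cell $(2,2)$ is fixed under transposition so $(T^t)_{2,2} = T_{2,2}$, and that by Corollary~\ref{t.transpose_reverse} one has $\cDes(T^t) = [n] \setminus \cDes(T)$. Consequently, if $n \in \cDes(T)$ then $n \notin \cDes(T^t)$, so $T^t$ falls under the hypothesis of the first bullet. Applying the first bullet to $T^t$ yields that $(T^t)_{2,2} - 1 = T_{2,2}-1$ is the first letter in a run of $\cDes(T^t)$ which is not the first run, which is exactly the assertion to be proved.

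The step I expect to be the main obstacle is the ``not the first run'' claim in the first bullet, as the ``first letter'' claim is essentially a bookkeeping consequence of the formula $\cDes(T) = (C-1)\sqcup\{d\}$. The delicate content is precisely the strict inequality $T_{2,2} > T_{2,1}$, which guarantees that $d$ lies strictly above the minimal descent $T_{2,1}-1$ produced by the first column. I would also take care with the small boundary cases (e.g.\ $k = 2$, where $C$ is a singleton) to confirm that $T_{2,1}$, $T_{1,2}$ and $T_{2,2}$ all exist and the cited strict inequalities among column and row neighbors remain valid.
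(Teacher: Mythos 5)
Your proposal is correct and follows essentially the same route as the paper: the first bullet is verified directly from the explicit description of $\Des(T)$ in Lemma~\ref{t.near-hooks}(c) (you phrase it via the formula $\cDes(T)=(C-1)\sqcup\{T_{2,2}-1\}$, the paper via the position of $T_{2,2}-1$, but these are the same fact), with the run containing $T_{2,2}-1$ separated from the first run by the descent $T_{2,1}-1$; and the second bullet is deduced from the first by transposition using Corollary~\ref{t.transpose_reverse} and $(T^t)_{2,2}=T_{2,2}$, exactly as in the paper.
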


\begin{proof}
To prove the first part, note that, by Lemma~\ref{t.n_in_cDes}.2(c),
$n \not\in \cDes(T)$ if and only if $T_{2,2}-1$ is in the first row of $T$.
Thus, in this case, $T_{2,2}-1 \in \Des(T)$ but $T_{2,2}-2\notin \Des(T)$. 
It follows that $T_{2,2}-1$ is the first letter in a run of $\Des(T)=\cDes(T)$, which is not the first run because $T_{2,1}-1\in\Des(T)$.

The second part follows using Corollary~\ref{t.transpose_reverse} and noting that $T^t_{2,2}=T_{2,2}$.
\end{proof}

\begin{definition}\label{def:psi}
For every $2\le k\le n-2$, define a map $\psi:\SYT(n-k,2,1^{k-2}) \to \SYT(n-k,2,1^{k-2})$
as follows.
\begin{itemize}
\item 
If $1 \not\in \cDes(T)$, let the first row of $\psi T$ contain the entries $[n] \setminus\cDes(T)$,
in increasing order from left to right. 
By Lemma~\ref{lem:hook2}, $T_{2,2}-1$ is the first letter in the $i$th run (for some $i \ne 1$) of either $\cDes(T)$ or $\cDes(T^t)$. In any case, let $(\psi T)_{2,2}$ be the first letter in the $i$th run of $\cDes(T)$.
Equivalently,
\[
(\psi T)_{2,2} =
\begin{cases}
T_{2,2}-1& \text{if $T_{2,2}-1$ is in the first row of $T$ (i.e., if $n\notin\cDes(T)$),} \\
a-1 & \text{if $T_{2,2}-1$ is in the first column of $T$ and $a$ is the entry below it,} \\
n & \text{if $T_{2,2}-1$ is at the bottom of the first column of $T$.} \end{cases}
\]
Place the remaining letters in the first column of $\psi T$, in increasing order from top to bottom.

\item 
If $1 \in \cDes(T)$, define $\psi T := (\psi T^t)^t$, reducing to the previous case.   
    
\end{itemize}
\end{definition}

\begin{remark}
For $T\in\SYT(n-k,2,1^{k-2})$ with $1,n$ both in $\cDes(T)$ or 
both not in $\cDes(T)$, the map $\psi$ coincides with the promotion operator.
\end{remark}

\begin{proposition}
The map $\psi$ is a bijection (generating a $\ZZ_n$-action) on $\SYT(n-k,2,1^{k-2})$ which cyclically shifts the cyclic descent set $\cDes$ defined in Theorem~\ref{thm:hooks_plus1_intro} (or Corollary~\ref{t.n_in_cDes}.2(c)); 
namely, for every $T\in \SYT(n-k,2,1^{k-2})$, $\cDes(\psi T)=-1+ \cDes(T)$ with addition modulo~$n$.
\end{proposition}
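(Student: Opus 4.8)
The plan is to verify the two defining properties of $\psi$ separately—that $\psi T$ is a standard Young tableau of shape $(n-k,2,1^{k-2})$, and that $\cDes(\psi T)=-1+\cDes(T)$—and then to deduce bijectivity and the $\ZZ_n$-action. First I would reduce everything to the case $1\notin\cDes(T)$. By Corollary~\ref{t.transpose_reverse} we have $\cDes(T^t)=[n]\setminus\cDes(T)$, so $1\in\cDes(T)$ exactly when $1\notin\cDes(T^t)$; moreover transposition carries $\SYT(n-k,2,1^{k-2})$ to $\SYT(k,2,1^{n-k-2})$, a shape of the same family (with parameter $n-k$) on which $\psi$ is also defined. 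Hence, once the statement is known whenever $1\notin\cDes(T)$, the case $1\in\cDes(T)$ follows from $\psi T=(\psi T^t)^t$: applying the base case to $T^t$ gives $\cDes(\psi T^t)=-1+([n]\setminus\cDes(T))$, and since complementation in $[n]$ commutes with the cyclic shift (both factor through the bijection $d\mapsto d-1$ of $[n]$), one obtains $\cDes(\psi T)=[n]\setminus\cDes(\psi T^t)=-1+\cDes(T)$.

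For the base case $1\notin\cDes(T)$, recall that the first row of $\psi T$ is $[n]\setminus\cDes(T)$ (which has size $n-k$ and fills the first row exactly), the internal cell $(\psi T)_{2,2}$ is the first letter of the $i$th run of $\cDes(T)$, and the remaining $k-1$ elements of $\cDes(T)$ fill the first column below the corner in increasing order. I would first check that $\psi T$ is a genuine SYT: the corner entry is $1\in[n]\setminus\cDes(T)$, so the first column is increasing; and since $i\ne1$, the first letter of run $1$ (namely $\min\cDes(T)$) lies in the first column, which forces both $(\psi T)_{2,1}$ and $(\psi T)_{1,2}$ to be smaller than $(\psi T)_{2,2}$, giving monotonicity in row $2$ and column $2$. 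I would then compute $\Des(\psi T)=\{j\in[n-1]:j+1\in\cDes(T)\}$ by a short case check: if $j+1\notin\cDes(T)$ then $j+1$ sits in the top row and $j$ is not a descent; if $j+1\in\cDes(T)$ then $j+1$ lies in the internal cell or in the first column and, in every sub-case, $j$ is strictly above $j+1$—the only delicate sub-case being $j,j+1\in\cDes(T)$ with $j+1$ in the first column, where one uses that $j$ and $j+1$ belong to the same run together with the increasing arrangement of the column. This identifies $\Des(\psi T)$ with $(-1+\cDes(T))\cap[n-1]$.

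It remains to pin down membership of $n$. By Corollary~\ref{t.n_in_cDes}.2(c), $n\in\cDes(\psi T)$ if and only if $(\psi T)_{2,2}-1$ lies in the first column of $\psi T$. But $(\psi T)_{2,2}$ is the first letter of a run of $\cDes(T)$, so $(\psi T)_{2,2}-1\notin\cDes(T)$; since the first column of $\psi T$ is contained in $\cDes(T)$, it follows that $n\notin\cDes(\psi T)$—consistent with $n\notin-1+\cDes(T)$, which holds because $1\notin\cDes(T)$. Combined with the previous paragraph, this yields $\cDes(\psi T)=-1+\cDes(T)$ in the base case and completes the shift property.

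Finally, for bijectivity I would proceed as follows. The shift property shows that $\psi$ maps the fiber $\cDes^{-1}(J)$ into $\cDes^{-1}(-1+J)$; by Theorem~\ref{t.near-hook:fibers}(c) these two fibers have equal cardinality, since the number of cyclic runs of $J$ is invariant under the cyclic shift. Hence it suffices to prove $\psi$ injective. If $\psi T=\psi T'$, then comparing cyclic descent sets gives $\cDes(T)=\cDes(T')=:J$, so $T$ and $T'$ lie in a common fiber; comparing the internal entries $(\psi T)_{2,2}=(\psi T')_{2,2}$ forces, via the run structure, the same run index and hence $T_{2,2}=T'_{2,2}$ (using Lemma~\ref{lem:hook2}), and since a tableau in a fiber is determined by its internal entry we get $T=T'$. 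Thus $\psi$ is injective, and being a self-map of a finite set it is a bijection; iterating $\psi$ shifts $\cDes$ by $-1$ and permutes the internal entries so as to return to the starting tableau after $n$ steps, giving $\psi^n=\mathrm{id}$ and the $\ZZ_n$-action. I expect the main obstacle to be the base-case verification in the middle two paragraphs: showing simultaneously that $\psi T$ is standard and that its descent set is exactly $(-1+\cDes(T))\cap[n-1]$, which requires tracking the run structure of $\cDes(T)$ across the three regimes for $(\psi T)_{2,2}$ listed in Definition~\ref{def:psi}.
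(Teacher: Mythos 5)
Your verification that $\psi T$ is standard and that $\cDes(\psi T)=-1+\cDes(T)$ follows the paper's proof closely: the same reduction via $T\mapsto T^t$ and Corollary~\ref{t.transpose_reverse} to the case $1\notin\cDes(T)$, and essentially the same computation identifying the cells of $\psi T$ in rows $\ge 2$ with $\cDes(T)$ and reading off the descents. Your bijectivity argument is a mild variant of the paper's and is sound: since $\cDes(T)=\cDes(T')$ together with $(\psi T)_{2,2}=(\psi T')_{2,2}$ pins down the run index and hence $T_{2,2}=T'_{2,2}$, and a tableau of this shape is recoverable from the pair $(\cDes(T),T_{2,2})$ (by Lemma~\ref{t.near-hooks}(c) the first-column set $C$ is $(\cDes(T)\setminus\{n\})+1$ or $(\cDes(T)\setminus\{T_{2,2}-1\})+1$ according to whether $n\in\cDes(T)$), injectivity follows, and a finite self-map that is injective is a bijection. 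You do not actually need the fiber-size comparison from Theorem~\ref{t.near-hook:fibers}(c) for this.

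The one genuine gap is the $\ZZ_n$-action. Your closing claim that iterating $\psi$ ``permutes the internal entries so as to return to the starting tableau after $n$ steps'' is asserted, not proved. After $n$ iterations you only know $\cDes(\psi^n T)=\cDes(T)$; since the fiber $\cDes^{-1}(\cDes(T))$ generally contains more than one tableau (its size is the number of cyclic runs minus one), this does not yield $\psi^n T=T$. The paper closes exactly this gap by introducing the run-index statistic $f(T)=i$ of Lemma~\ref{lem:hook2}, checking that $f$ is $\psi$-invariant, and observing that the marked map $\cDes\times f$ is injective (via Theorem~\ref{t.near-hook:fibers}(c)); then $\cDes(\psi^n T)=\cDes(T)$ and $f(\psi^n T)=f(T)$ together force $\psi^n T=T$. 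Your injectivity argument already contains the key observation that $(\cDes(T),T_{2,2})$ determines $T$, so the missing step is precisely to show that the run index is preserved along the orbit --- i.e., that $(\psi T)_{2,2}-1$ is again the first letter of the $i$th run of $\cDes(\psi T)$ or $\cDes((\psi T)^t)$ --- which is immediate from Definition~\ref{def:psi} and the shift property but must be said.
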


\begin{example}
Below are three orbits of the $\bbz_6$-action on $\SYT(3,2,1)$. Cyclic descents are marked in boldface \textcolor{red}{\bf red}.

\begin{center}
\begin{tikzpicture}[scale=2.3]
\node (0) at (0,0) {$\young(12\rthree,4\rfive,\rsix)\overset{\psi}{\mapsto}$};
\node (1) at (1,0) {$\young(1\rtwo\rfour,3\rfive,6)\overset{\psi}{\mapsto}$};
\node (2) at (2,0) {$\young(\rone\rthree6,2\rfour,5)\overset{\psi}{\mapsto}$};
\node (3) at (3,0) {$\young(1\rtwo\rsix,\rthree5,4)\overset{\psi}{\mapsto}$};
\node (4) at (4,0) {$\young(\rone4\rfive,\rtwo6,3)\overset{\psi}{\mapsto}$};
\node (5) at (4.9,0) {$\young(\rone3\rfour,2\rsix,5)$,};
\node (5) at (4.9,0) {$$,};
\draw [->, >=stealth] (4.9,.35) -- (4.9,.5) -- node[above]{\scriptsize $\psi$}  (-0.1,.5) -- (-0.1,.35);  
\end{tikzpicture}
\end{center}

\begin{center}
\begin{tikzpicture}[scale=2.3]
\node (0) at (0,0) {$\young(\rone\rthree\rfive,24,6)\overset{\psi}{\mapsto}$};
\node (1) at (1,0) {$\young(1\rtwo\rsix,3\rfour,5)$\ \ ,};
\draw [->, >=stealth] (0.9,.35) -- (0.9,.5) -- node[above]{\scriptsize $\psi$}  (-0.1,.5) -- (-0.1,0.35);  
\node (4) at (4,0) {$\young(\rone\rthree\rfive,26,4)\overset{\psi}{\mapsto}$};
\node (5) at (4.9,0) {$\young(1\rtwo\rfour,3\rsix,5)$.};
\node (5) at (4.9,0) {$$.};
\draw [->, >=stealth] (4.9,.35) -- (4.9,.5) -- node[above]{\scriptsize $\psi$}  (3.9,.5) -- (3.9,0.35);  
\end{tikzpicture}
\end{center}
\end{example}

\begin{proof} 
First we show that $\psi T$ is a standard Young tableau. 
By definition, the first row and first column are increasing, so it remains to show that 
 $(\psi T)_{2,2}>\max \{(\psi T)_{1,2}, (\psi T)_{2,1}\}$.
By Corollary~\ref{t.transpose_reverse} (regarding the effect of the transpose on $\cDes$), 
it suffices to prove this inequality for the case $1 \not\in \cDes(T)$.
In this case, by Definition~\ref{def:psi} and Lemma~\ref{lem:hook2}, $(\psi T)_{2,2}$ belongs to a run of $\cDes(T)$ which is not the first run,
    thus it is bigger than the minimum of $\cDes(T)$, which equals $(\psi T)_{2,1}$.
Also, by definition, if $1\not\in \cDes(T)$ then 
$(\psi T)_{1,2}$ is the smallest value in $[n]\setminus\cDes(T)$ which is not 1,
thus smaller than first letter in any run of $\cDes(T)$ which is not
the first run, hence $(\psi T)_{1,2}\le (\psi T)_{2,2}$.

\medskip

Next we prove that $\cDes(\psi  T)=-1+\cDes(T)$ . Again, by Corollary~\ref{t.transpose_reverse}, 
it suffices to prove this statement for the case $1\not\in \cDes(T)$.
Since $(\psi T)_{2,2}$ is the first letter in a run of $\cDes(T)=\{(\psi T)_{i,j}:\ i>1\}$ which is not the first run, $(\psi T)_{2,2}-1$ lies in the first row of $\psi T$. 
Now observe that, by Lemma~\ref{t.near-hooks}(b), for every $T\in\SYT(n-k,2,1^{k-2})$,
\[
\cDes(T)=\begin{cases}
 \{T_{i,1}-1:\ i>1\}\sqcup \{T_{2,2}-1\} &  \text{if $T_{2,2}-1$ lies in the first row of $T$},\\
 \{T_{i,1}-1:\ i>1\}\sqcup \{n\} & \text{otherwise}.
 \end{cases}
\]
Hence,
\begin{equation}\label{eq:psi}
\cDes(\psi T)=\{(\psi T)_{i,1}-1:\ i>1\}\sqcup\{(\psi T)_{2,2}-1\}=\{(\psi T)_{i,j}-1:\ i>1\}=-1+\cDes(T).
\end{equation}
The last equality follows from the assumption $1\not\in \cDes(T)$ and the definition of $\psi$.

\medskip

To prove that  $\psi$ is a bijection, it suffices to show that $\psi^n$ is the identity map.
For every $\varnothing\ne J\subseteq [n]$, let
$r(J):=|\{d\in J:\ d-1\not\in J \}|$ (with addition modulo $n$) be the number of cyclic runs in $J$.
Since $\cDes(T^t)=[n]\setminus \cDes(T)$ for $T\in\SYT(n-k,2,1^{k-2})$ by Corollary~\ref{t.transpose_reverse},
$r(\cDes(T^t))=r(\cDes(T))$.
If $1\not\in \cDes(T)$, then
all cyclic runs in $\cDes(T)$ are regular runs. Similarly, if $1\in \cDes(T)$, then by  Corollary~\ref{t.transpose_reverse},
$1\not\in \cDes(T^t)$ and all cyclic runs in $\cDes(T^t)$ are regular runs.
Thus, by Lemma~\ref{lem:hook2}, for every $T\in\SYT(n-k,2,1^{k-2})$ there exists a unique 
$1<i\le r(\cDes(T))$ such that
$T_{2,2}-1$ is either the first letter in the $i$th run of $\cDes(T)$ 
or the first letter in the $i$th run of $\cDes(T^t)$. 
Let $f$ be the map from the set $\SYT(n-k,2,1^{k-2})$ to the interval $[2,r(\cDes(T))]$
which associates, to each $T\in\SYT(n-k,2,1^{k-2})$, this positive integer $i$.

Combining this with Theorem~\ref{t.near-hook:fibers}(c), we deduce that
the {\em marked} cyclic descent set map $\cDes\times f$ from $\SYT(n-k,2,1^{k-2})$ to the set of pairs
\[
\{(J,i):\ J\subseteq[n],\ |J|=k+2 \text{ and } 2 \le i \le r(J)\}
\]
is a bijection. 
Furthermore,
by definition, $f$ is invariant under $\psi$, namely, $f(\psi T)=f(T)$ for every $T\in\SYT(n-k,2,1^{k-2})$.
Combining this with Equation \eqref{eq:psi}, the 
length of the orbit of $T$ under $\psi$ is equal to the length of the orbit of 
$\cDes(T)$ under
the shift operation on subsets of $[n]$, thus it divides $n$. One deduces that $\psi^n$ is the identity map, and so $\psi$ is a bijection that generates a $\ZZ_n$-action on $\SYT(n-k,2,1^{k-2})$.
\end{proof}

\subsection{Generating functions for cyclic descent set over near-hook shapes}\label{sec:GF-near-hooks}

Next we derive some enumerative results, for the various types of near-hook shapes, from the explicit descriptions of cyclic descent sets given in Theorem~\ref{t.near-hook:fibers}.
For a subset $J\subseteq [n]$, let ${\bf x}^J:=\prod\limits_{i\in J}x_i$.

\begin{proposition}\label{prop:gf1} We have
\begin{equation}\label{eq:gf1}
\sum_{k=1}^{n-1} \sum_{T \in \SYT ((1^{n-k}) \oplus (k))} {\bf x}^{\cDes(T)}
= \prod_{i=1}^n (1+x_i) - 1 - x_1 \cdots x_n.
\end{equation}
In particular,
\[
\sum_{k=1}^{n-1} \sum_{T \in \SYT ((1^{n-k}) \oplus (k))} q^{\cdes(T)}
= (1+q)^n - 1 - q^n.
\]
\end{proposition}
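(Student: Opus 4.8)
The plan is to prove the generating function identity in Proposition~\ref{prop:gf1} by combining the combinatorial description of the cyclic descent set for hooks-minus-corner-cell shapes (Theorem~\ref{t.near-hook:fibers}(a)) with a direct enumeration of subsets $J \subseteq [n]$. The key structural input is that for $\s = (1^{n-k}) \oplus (k)$ — a hook minus its corner cell — Theorem~\ref{t.near-hook:fibers}(a) gives $\card{\cDes^{-1}(J)} = 1$ for every $J \subseteq [n]$ with $\card{J} = k$. Equivalently, the map $\cDes$ restricted to $\SYT((1^{n-k}) \oplus (k))$ is a bijection onto the collection of size-$k$ subsets of $[n]$. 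This is the crux: once we know each admissible $J$ is hit exactly once, the left-hand side becomes a pure sum over subsets, with no multiplicities to track.

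\medskip

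First I would fix $k$ with $1 \le k \le n-1$ and observe, via Theorem~\ref{t.near-hook:fibers}(a), that
\[
\sum_{T \in \SYT((1^{n-k}) \oplus (k))} {\bf x}^{\cDes(T)}
= \sum_{\substack{J \subseteq [n] \\ \card{J} = k}} {\bf x}^J,
\]
since each size-$k$ subset arises as $\cDes(T)$ for exactly one tableau $T$. Summing this over all $k$ from $1$ to $n-1$ gives
\[
\sum_{k=1}^{n-1} \sum_{T \in \SYT((1^{n-k}) \oplus (k))} {\bf x}^{\cDes(T)}
= \sum_{\substack{J \subseteq [n] \\ 1 \le \card{J} \le n-1}} {\bf x}^J.
\]
The right-hand side is the sum of ${\bf x}^J$ over all nonempty proper subsets of $[n]$. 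Next I would recognize the full sum over \emph{all} subsets as the standard factorization $\sum_{J \subseteq [n]} {\bf x}^J = \prod_{i=1}^n (1 + x_i)$, obtained by expanding the product and matching each subset $J$ to the term that picks $x_i$ exactly when $i \in J$. Removing the two excluded subsets — the empty set (contributing $1$) and the full set $[n]$ (contributing $x_1 \cdots x_n$) — yields precisely $\prod_{i=1}^n (1 + x_i) - 1 - x_1 \cdots x_n$, establishing \eqref{eq:gf1}.

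\medskip

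For the specialization, I would set $x_i = q$ for all $i$. Then ${\bf x}^{\cDes(T)} = q^{\card{\cDes(T)}} = q^{\cdes(T)}$, the product $\prod_{i=1}^n(1+x_i)$ collapses to $(1+q)^n$, and $x_1 \cdots x_n$ becomes $q^n$, giving $(1+q)^n - 1 - q^n$ as claimed. \textbf{The main obstacle} is essentially nonexistent at the level of new ideas: the entire content is packaged in Theorem~\ref{t.near-hook:fibers}(a), whose proof already shows that $\cDes$ is injective with full image on size-$k$ subsets for this shape. The only care required is bookkeeping — confirming that as $k$ ranges over $1, \dots, n-1$ the subsets $J$ range over exactly the nonempty proper subsets (no subset is counted twice, since distinct $k$ give subsets of distinct sizes, and every nonempty proper subset has size between $1$ and $n-1$), and that the two boundary terms removed correspond precisely to $\varnothing$ and $[n]$, which are excluded because they would require $k=0$ or $k=n$, outside the stated range.
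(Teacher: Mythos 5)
Your proof is correct and is essentially the paper's own one-line argument: Theorem~\ref{t.near-hook:fibers}(a) makes $\cDes$ a bijection onto the subsets of $[n]$ of the appropriate fixed size, and summing over $k$ produces every nonempty proper subset exactly once, whence $\prod_{i=1}^n(1+x_i)-1-x_1\cdots x_n$. One small indexing remark: for the shape $(1^{n-k})\oplus(k)$ the column has length $n-k$, so the common cyclic descent number is $n-k$ rather than $k$ and the inner sum ranges over subsets $J$ with $\card{J}=n-k$; this reindexing is harmless once you sum over $k=1,\dots,n-1$ (and the paper's own proof elides it in the same way), but the displayed intermediate identity should read $\card{J}=n-k$.
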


\begin{proof}
By Theorem~\ref{t.near-hook:fibers}(a), each subset of $[n]$ of size $k$ appears exactly once as $\cDes(T)$ for some $T \in \SYT((1^{n-k})\oplus (k))$.
\end{proof}

\begin{proposition}\label{prop:gf2} We have
\begin{equation}\label{eq:gf2}
\sum_{k=1}^{n-1} \sum_{T \in \SYT((n-k,1^{k-1})\oplus (1))} {\bf x}^{\cDes(T)}
= \prod_{i=1}^{n} (1+x_i) \cdot \sum_{j=1}^{n}\frac{x_j}{(1+x_{j-1})(1+x_j)}.
\end{equation}
In particular,
\[
\sum_{k=1}^{n-1}\sum_{T \in \SYT((n-k,1^{k-1}) \oplus (1))} q^{\cdes(T)}
= nq (1+q)^{n-2}.
\]
\end{proposition}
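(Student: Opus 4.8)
The plan is to establish the weighted identity \eqref{eq:gf2}, since the specialization to $q^{\cdes(T)}$ then follows by setting all $x_i = q$ and simplifying. By Theorem~\ref{t.near-hook:fibers}(b), for the shape $(n-k,1^{k-1})\oplus(1)$ each subset $J\subseteq[n]$ with $|J|=k$ appears as $\cDes(T)$ exactly as many times as there are cyclic runs in $J$. Summing over all $k$, the left-hand side of \eqref{eq:gf2} therefore equals $\sum_{\varnothing\ne J\subseteq[n]} r(J)\,{\bf x}^J$, where $r(J)$ denotes the number of cyclic runs of $J$ (as defined before Theorem~\ref{t.near-hook:fibers}). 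Thus the entire problem reduces to computing this run-weighted generating function over subsets of $[n]$ arranged cyclically.

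The key combinatorial step is to re-express $r(J)$ as a sum of indicator contributions. Recall from the proof of the preceding proposition that $r(J) = |\{j \in J : j-1 \notin J\}|$ (with indices modulo $n$): each cyclic run is counted once, by its smallest element $j$, characterized by $j\in J$ and $j-1\notin J$. Hence
\[
\sum_{\varnothing\ne J\subseteq[n]} r(J)\,{\bf x}^J
= \sum_{j=1}^{n}\ \sum_{\substack{J\subseteq[n]\\ j\in J,\ j-1\notin J}} {\bf x}^J.
\]
For each fixed $j$, the inner sum factors completely over the elements of $[n]$: the element $j$ must be present (contributing $x_j$), the element $j-1$ must be absent (contributing $1$), and every other element $i$ is free (contributing $1+x_i$). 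This yields the inner sum as $x_j \cdot \prod_{i\ne j,\,j-1}(1+x_i)$, which we rewrite as $\prod_{i=1}^n(1+x_i)\cdot \dfrac{x_j}{(1+x_{j-1})(1+x_j)}$, exactly matching the summand on the right-hand side of \eqref{eq:gf2}.

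I expect the main (though minor) obstacle to be bookkeeping with the cyclic indices modulo $n$: one must check that the case $j=1$ is handled correctly, where $j-1=0$ is interpreted as $n$, so that the forbidden element is $x_n$ and the factor $(1+x_{j-1})$ reads $(1+x_n)$; this is precisely the convention already introduced in Section~\ref{sec:prel2}. With that convention the factorization is uniform across all $j$, and no summand is double-counted because distinct run-minima give disjoint defining conditions on a single $J$. For the $q$-specialization, substituting $x_i=q$ collapses each summand to $q\,(1+q)^{n-2}$, and summing the $n$ equal terms gives $nq(1+q)^{n-2}$, completing the proof.
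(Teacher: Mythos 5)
Your proposal is correct and follows essentially the same route as the paper: invoke Theorem~\ref{t.near-hook:fibers}(b) to identify the coefficient of ${\bf x}^J$ with the number of cyclic runs of $J$, represent each cyclic run by its first element $j$ (so $j \in J$, $j-1 \notin J$, indices mod $n$), and interchange the order of summation so the inner sum factors as $x_j \prod_{i \ne j, j-1}(1+x_i)$. The paper's proof is just a more compressed version of the same argument, so nothing further is needed.
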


\begin{proof}
By Theorem~\ref{t.near-hook:fibers}(b), the multiplicity of each monomial ${\bf x}^J$ in the LHS of Equation~\eqref{eq:gf2} (for $\varnothing \subsetneq J \subsetneq [n]$) is equal to the number of cyclic runs in $J$. Representing each cyclic run by its first element, this is the number of $j \in [n]$ for which $j \in J$ but $j-1 \not\in J$ (with subtraction modulo $n$). Changing the order of summation, this amounts to counting, for each $j \in [n]$, each subset $J$ which contains $j$ but not $j-1$.
\end{proof}

\begin{proposition}\label{prop:gf3}
We have
\begin{equation}\label{eq:gf3}
\sum_{k=2}^{n-2} \sum_{T \in \SYT((n-k,2,1^{k-2}))} {\bf x}^{\cDes(T)}
= 1 + x_1 \cdots x_n + \prod_{i=1}^{n} (1+x_i) \cdot \left[ -1 + \sum_{j=1}^{n}\frac{x_j}{(1+x_{j-1})(1+x_j)} \right].
\end{equation}
In particular,
\[
\sum_{k=2}^{n-2} \sum_{T \in \SYT((n-k,2,1^{k-2}))} q^{\cdes(T)}
= nq (1+q)^{n-2} - (1+q)^n+1+q^n.
\]
\end{proposition}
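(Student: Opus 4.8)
We want a generating-function identity for the sum of $\mathbf{x}^{\cDes(T)}$ over all tableaux whose shape is a hook-plus-an-internal-cell, $(n-k,2,1^{k-2})$, as $k$ ranges from $2$ to $n-2$; together with its principal specialization $x_i = q$.

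The plan is to argue exactly as in the proof of Proposition~\ref{prop:gf2}, but using part (c) of Theorem~\ref{t.near-hook:fibers} in place of part (b). By Theorem~\ref{t.near-hook:fibers}(c), for each shape $(n-k,2,1^{k-2})$ the fiber size $\card{\cDes^{-1}(J)}$ over a subset $J \subseteq [n]$ with $\card{J} = k$ equals the number of cyclic runs in $J$ \emph{minus one}. Summing over $k$ from $2$ to $n-2$, the subsets $J$ that actually occur are exactly those with $2 \le \card{J} \le n-2$, since $\card{\cDes(T)} = k \in [2, n-2]$ for these shapes. First I would therefore write the left-hand side as $\sum_{J} (r(J) - 1)\,\mathbf{x}^J$, where the sum is over all $J$ with $2 \le \card{J} \le n-2$ and $r(J)$ denotes the number of cyclic runs of $J$.

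The main step is to split this into $\sum_J r(J)\,\mathbf{x}^J - \sum_J \mathbf{x}^J$ and evaluate each piece over the full range of nonempty proper subsets, correcting at the boundary. For the ``$r(J)$'' piece, the computation in the proof of Proposition~\ref{prop:gf2} already shows that summing $r(J)\,\mathbf{x}^J$ over \emph{all} $\varnothing \subsetneq J \subsetneq [n]$ gives $\prod_{i=1}^n(1+x_i)\cdot\sum_{j=1}^n \frac{x_j}{(1+x_{j-1})(1+x_j)}$; indeed, singletons $J$ (with $\card{J}=1$) and co-singletons (with $\card{J}=n-1$) contribute $r(J)=1$, so they do not differ between ``number of runs'' and ``number of runs minus one'' — except that for the present proposition they must be excluded entirely since they correspond to $k=1$ or $k=n-1$. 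For the ``$-1$'' piece, $\sum_J \mathbf{x}^J$ over all subsets is $\prod_{i=1}^n(1+x_i)$, from which I subtract the terms $J=\varnothing$ (contributing $1$) and $J=[n]$ (contributing $x_1\cdots x_n$). The bookkeeping is to check that the size-$1$ and size-$(n-1)$ subsets, which are included in Proposition~\ref{prop:gf2}'s formula but excluded here, contribute a net factor that exactly matches the explicit corrections $1 + x_1\cdots x_n$ and the $-1$ inside the bracket; this is where I expect the only real care to be needed.

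Concretely, I would observe that a subset $J$ with $\card{J}=1$ has $r(J)=1$ and $r(J)-1=0$, and likewise $\card{J}=n-1$ gives $r(J)=1$ and $r(J)-1=0$, so these contribute nothing to the left-hand side even though they \emph{are} present in the range $\varnothing\subsetneq J\subsetneq[n]$. Hence $\sum_{2\le\card J\le n-2}(r(J)-1)\mathbf{x}^J = \sum_{\varnothing\subsetneq J\subsetneq[n]}(r(J)-1)\mathbf{x}^J$, and I may extend the sum to all proper nonempty $J$ at no cost. Now $\sum_{\varnothing\subsetneq J\subsetneq[n]} r(J)\,\mathbf{x}^J$ equals the right-hand side of \eqref{eq:gf2}, while $\sum_{\varnothing\subsetneq J\subsetneq[n]} \mathbf{x}^J = \prod_{i=1}^n(1+x_i) - 1 - x_1\cdots x_n$. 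Subtracting yields
\[
\prod_{i=1}^{n}(1+x_i)\sum_{j=1}^{n}\frac{x_j}{(1+x_{j-1})(1+x_j)} - \prod_{i=1}^{n}(1+x_i) + 1 + x_1\cdots x_n,
\]
which is exactly the claimed right-hand side of \eqref{eq:gf3} after factoring $\prod_i(1+x_i)$ out of the first two terms. The specialization $x_i\mapsto q$ then follows by substituting $\prod_i(1+x_i)\mapsto(1+q)^n$, $x_1\cdots x_n\mapsto q^n$, and $\sum_j \frac{x_j}{(1+x_{j-1})(1+x_j)}\mapsto n\cdot\frac{q}{(1+q)^2}$, giving $nq(1+q)^{n-2} - (1+q)^n + 1 + q^n$. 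The only obstacle is the boundary bookkeeping at sizes $1$ and $n-1$, which I have just resolved; everything else is a direct reuse of the Proposition~\ref{prop:gf2} computation.
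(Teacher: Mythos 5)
Your proposal is correct and follows essentially the same route as the paper: the paper also invokes Theorem~\ref{t.near-hook:fibers}(c) to identify the coefficient of $\mathbf{x}^J$ as the number of cyclic runs minus one, and then obtains \eqref{eq:gf3} by subtracting Equation~\eqref{eq:gf1} from Equation~\eqref{eq:gf2}. Your explicit check that subsets of size $1$ and $n-1$ contribute zero (since they have exactly one cyclic run) is the right bookkeeping, which the paper leaves implicit.
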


\begin{proof}
By Theorem~\ref{t.near-hook:fibers}(c), the multiplicity of each monomial ${\bf x}^J$ in the LHS of Equation~\eqref{eq:gf3} (for $\varnothing \subsetneq J \subsetneq [n]$) is equal to the number of cyclic runs in $J$ minus $1$.
Therefore, this equation is obtained by subtracting Equation~\eqref{eq:gf1} from Equation~\eqref{eq:gf2}.
\end{proof}

\section{Strips}\label{sec:strips}

\subsection{From near-hooks to strips}\label{sec:near-hooks_to_strips}

In this section we generalize the cyclic descent set map on a hook minus a corner cell to certain shapes called strips. 

A {\em strip} is a skew shape all of whose connected components have either only one row or only one column; equivalently, it is a skew shape which
contains neither of the shapes
\[
\young(\hfill\hfill,\hfill) \ \ ,\ \
\young(:\hfill,\hfill\hfill)\ .
\]

\begin{example} The following shapes are strips:
\[
\young(::::::::\hfill\hfill\hfill\hfill,:::::::\hfill,:::::::\hfill,:::::::\hfill,:::::::\hfill)\ \ ,
\quad 
\young(::::\hfill\hfill\hfill\hfill,\hfill\hfill\hfill\hfill)\ \ \ \ ,
\quad 
\young(::::\hfill\hfill\hfill\hfill,:::\hfill,:::\hfill,:::\hfill,\hfill\hfill\hfill)\ \   . 
\]
\end{example}

\medskip

By Corollary~\ref{t.n_in_cDes}.2(a), there exists a unique cyclic descent map for SYT of shape a hook minus its corner,
and it is defined by letting
$n \in \cDes(T)$ if and only if the entry $1$ is in the first column of $T$. 
The existence part of this statement
can be generalized to arbitrary strips. The uniqueness part does not generalize, as the example in Remark~\ref{rem:skew}.2 shows.

\begin{proposition}\label{def-strip}
For every strip $\lambda/\mu$ with $n$ cells, there exists a cyclic descent map, specified for $T\in\SYT(\lambda/\mu)$ by letting $n\in\cDes(T)$ if and only if 
either $n$ is strictly north of $1$, or $1$ and $n$ are
in the same vertical component; equivalently,
if $n-1 \in \Des (pT)$, where $p$ is the promotion operator. 
\end{proposition}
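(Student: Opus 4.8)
The plan is to verify directly that the explicitly defined map satisfies the two axioms of Definition~\ref{def:extension}, and, in parallel, to check that the combinatorial rule (the relative position of $1$ and $n$) agrees with the promotion rule $n-1\in\Des(pT)$. Axiom~(i) is automatic, since the rule only prescribes whether $n\in\cDes(T)$ and leaves $\cDes(T)\cap[n-1]=\Des(T)$ untouched. The entire content therefore lies in axiom~(ii), the closure of the multiset $\multiset{\cDes(T):T\in\SYT(\lambda/\mu)}$ under cyclic rotation.

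For axiom~(ii) I would follow the template already used for rectangles and for $\lambda^\Box$ in Section~\ref{sec:prel2}: exhibit a bijection that cyclically shifts $\cDes$. The natural candidate is inverse promotion $p^{-1}T=\jdt(1+T)$, and the goal is to prove that $p^{-1}$ is a shape-preserving bijection on $\SYT(\lambda/\mu)$ satisfying $\cDes(p^{-1}T)=1+\cDes(T)$ (addition mod $n$). Granting this, closure under rotation is immediate: the orbits of $p^{-1}$ partition $\SYT(\lambda/\mu)$, and along each orbit the sets $\cDes(T)$ traverse a full cyclic-shift orbit, so the multiset is rotation-invariant. This route simultaneously yields the $\bbz_n$-action promised for strips, subsumes Corollary~\ref{t.n_in_cDes}.2(a) (the hook minus its corner is the strip $(1^k)\oplus(n-k)$), and covers the degenerate strips consisting of a single row or single column, where the rule forces $\cDes(T)=\varnothing$ or $\cDes(T)=[n]$ and one checks closure by hand.

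To establish the shift I would split the identity $\cDes(p^{-1}T)=1+\cDes(T)$ into an interior part and a boundary part. For the interior, I would show that $i\in\Des(p^{-1}T)\iff i-1\in\Des(T)$ for all $2\le i\le n-1$: the relabeling $i\mapsto i+1$ (for $i\le n-1$) and $n\mapsto1$ preserves the relative order of the values $1,\dots,n-1$ of $T$, so the only entry out of place is the image of $n$, now equal to $1$, sitting in the (corner) cell formerly occupied by $n$ and bubbling up-and-left under the elementary step $\ele$. The boundary part is the single equivalence $1\in\Des(p^{-1}T)\iff n\in\cDes(T)$, i.e.\ whether $2$ lands strictly below $1$ in $p^{-1}T$. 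This is exactly where the combinatorial rule enters: one tracks the trajectory of the bubbling entry through the strip and records the row in which it comes to rest relative to the image of $1$, matching the condition ``$n$ strictly north of $1$, or $1$ and $n$ in the same vertical component.'' The equivalence of the two descriptions in the statement is proved along the way, by locating in $pT=\jdt(-1+T)$ the cell occupied by $n$ (the image of the old $1$) and comparing its row with that of $n-1$.

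The main obstacle will be precisely this boundary analysis: controlling the shape-preserving jeu de taquin on a general strip, whose components are disconnected horizontal and vertical segments, and proving that the bubbling entry affects \emph{only} the wrap-around descent between $n$ and $1$, obeying the stated rule no matter how the segments are arranged. Because a strip with more than one component is disconnected, it is not a connected ribbon, so Theorem~\ref{conj1} already guarantees that some cyclic descent map with rotation-invariant fiber sizes exists; this furnishes an alternative to the bijective argument, in which one instead computes the fiber sizes $\card{\cDes^{-1}(J)}$ of the explicit map and verifies their invariance under $J\mapsto 1+J$ directly, bypassing promotion. I would nonetheless keep the promotion argument as the primary one, since it delivers both the cyclic descent map and the accompanying $\bbz_n$-action in a single construction.
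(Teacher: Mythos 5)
Your proposal follows essentially the same route as the paper: axiom (i) is immediate, and axiom (ii) is obtained by showing that promotion (equivalently, its inverse) is a shape-preserving bijection on $\SYT(\lambda/\mu)$ satisfying $\cDes(pT) = -1+\cDes(T)$, which simultaneously yields the $\bbz_n$-action of Corollary~\ref{cor:strips_action}. The substantive difference is in how the equivariance is verified. You propose to track the jeu-de-taquin trajectory of the displaced entry and flag this as ``the main obstacle''; the paper dissolves that obstacle with one structural observation: on a strip the displaced entry can only travel within its own connected component (cells of distinct components are never edge-adjacent), so $pT$ is obtained from $-1+T$ simply by re-sorting the entries within each component, each of which is a single row or a single column. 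Granting this, $\cDes(pT)=-1+\cDes(T)$ follows from a three-case analysis on the relative position of $i$ and $i+1$ (mod $n$): same row, same column, or different components. In the last case one uses that any two components of a skew shape are strictly separated --- one lies strictly north and strictly east of the other, by contiguity of the rows and columns of $\lambda/\mu$ --- so the descent relation between entries in different components depends only on which components they occupy, not on the exact cells. Be aware that this separation argument is also needed for what you call the ``interior part'': the bubbling moves entries other than the displaced one (it shifts the remaining entries of a column component by one row), so the preservation of interior descents is not a pure relabeling statement, contrary to what your sketch suggests. Your fallback via Theorem~\ref{conj1} and direct computation of fiber sizes would also work for multi-component strips, but is unnecessary here and would not by itself deliver the $\bbz_n$-action.
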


\begin{proof}
For $T\in\SYT(\lambda/\mu)$, $pT$ is obtained from $-1+T$ by sorting the entries within each row so that they
increase from left to right, and within each column so that they increase from top to bottom.
Since $p: \SYT(\lambda/\mu) \to \SYT(\lambda/\mu)$ is a bijection (and, in fact, generates a $\ZZ_n$-action in this case), it is enough to
show that $\cDes(pT)=-1+\cDes(T)$ with the above definition, by considering
the following cases, for $1\le i\le n$, with addition mod $n$:
\begin{itemize}
\item 
If $i$ and $i+1$ are in different connected components of $T$, then the corresponding components of $p T$ contain $i-1$ and $i$, respectively.
Thus, $i\in\cDes(T)$ if and only if $i-1\in\cDes(pT)$.
\item 
If $i$ and $i+1$ are in the same row of $T$, then $i-1$ and $i$ are in the corresponding row of $p T$. 
In this case,  $i\not\in\cDes(T)$ and $i-1\not\in\cDes(p T)$.
\item 
If $i$ and $i+1$ are in the same column of $T$, then $i-1$ and $i$ are in the corresponding column of $p T$.
In this case, $i\in\cDes(T)$ and $i-1\in\cDes(p T)$.\qedhere
\end{itemize}
\end{proof}

We conclude that the inverse promotion operator $\bij T:=\jdt(1+T)=p^{-1} T$ shifts (cyclically) the cyclic descent sets of the SYT of any given strip shape.

\begin{corollary}\label{cor:strips_action}
For every SYT $T$ of a strip shape, 
\[
\cDes(\bij T)=1+\cDes(T).
\]
\end{corollary}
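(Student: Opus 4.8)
The plan is to deduce Corollary~\ref{cor:strips_action} directly from Proposition~\ref{def-strip}. The proposition establishes that, for the cyclic descent map on any strip shape, the promotion operator $p$ satisfies $\cDes(pT) = -1 + \cDes(T)$ for every SYT $T$ of that shape, with addition modulo $n$. Since $p$ is a bijection on $\SYT(\lambda/\mu)$ (indeed it generates a $\ZZ_n$-action, as noted in the proof of the proposition), its inverse $p^{-1} = \bij$ is well defined, and the desired identity should follow by simply inverting the relation for $p$.

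First I would apply the relation $\cDes(pT') = -1 + \cDes(T')$ from Proposition~\ref{def-strip} to the particular tableau $T' = \bij T = p^{-1} T$. Since $p(p^{-1}T) = T$, the left-hand side becomes $\cDes(T)$, giving
\[
\cDes(T) = -1 + \cDes(\bij T).
\]
Adding $1$ (modulo $n$) to both sides then yields $1 + \cDes(T) = \cDes(\bij T)$, which is exactly the claimed statement. The only care needed is that addition of a constant modulo $n$ is an invertible operation on subsets of $[n]$ (it is the cyclic shift), so that $-1 + X = Y$ is equivalent to $X = 1 + Y$; this is immediate from the definition of $k + D$ given in Section~\ref{sec:prel2}.

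This is essentially a one-line corollary, so there is no genuine obstacle. The only point that warrants a remark is the well-definedness of $\bij = p^{-1}$: one must invoke that $p$ is a bijection on the finite set $\SYT(\lambda/\mu)$, which is guaranteed because $p$ restricts to a self-map of this set (the promotion of a standard tableau of strip shape is again a standard tableau of the same shape, as used throughout Proposition~\ref{def-strip}) and is injective, hence bijective on a finite set. With that in hand, the identity $\cDes(\bij T) = 1 + \cDes(T)$ is a formal consequence of the proposition, and the proof is complete.
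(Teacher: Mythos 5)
Your proposal is correct and is essentially the paper's own (implicit) argument: the paper states the corollary as an immediate consequence of Proposition~\ref{def-strip}, obtained by applying the relation $\cDes(pT')=-1+\cDes(T')$ to $T'=\bij T=p^{-1}T$ and inverting the cyclic shift. Your additional remarks on the well-definedness of $p^{-1}$ and the invertibility of the shift on subsets of $[n]$ are accurate and consistent with the paper's observation that $p$ generates a $\ZZ_n$-action on $\SYT(\lambda/\mu)$ for strip shapes.
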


\begin{example} 
Iteratively applying $\bij$ to the SYT below, we get
\begin{center}
\begin{tikzpicture}[scale=2.3]
\node (0) at (0,0) {$\young(:12,3,4)$};
\node (1) at (0.5,0) {$\overset{\bij}{\mapsto}$};
\node (2) at (1,0) {$\young(:23,1,4)$};
\node (3) at (1.5,0) {$\overset{\bij}{\mapsto}$};
\node (4) at (2,0) {$\young(:34,1,2)$};
\node (5) at (2.5,0) {$\overset{\bij}{\mapsto}$};
\node (6) at (3,0) {$\young(:14,2,3)$};
\node (7) at (3.5,-0.15) {,};
\draw [->, >=stealth] (3,.35) -- (3,.5) -- node[above]{\scriptsize $\bij$}  (0,.5) -- (0,.35);  
\end{tikzpicture}
\end{center}
with cyclic descent sets $\{2,3\}$,\ $\{3,4\}$,\ $\{1,4\}$ and $\{1,2\}$, respectively.
\end{example}
\medskip

Of special interest are {\em horizontal strips}, which are the skew shapes all of whose connected components are single rows. 

\begin{corollary}\label{def-hor}
For every horizontal strip $\lambda/\mu$ with $n$ cells, a cyclic descent map is given by letting
\[
\cDes(T) := \{i\in[n]:\ i+1 \!\pmod n \text{ is in a lower row than $i$ in $T$}\}
\]
for $T\in\SYT(\lambda/\mu)$.
\end{corollary}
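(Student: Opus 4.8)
The goal is to prove Corollary~\ref{def-hor}: for every horizontal strip $\lambda/\mu$ with $n$ cells, the map
\[
\cDes(T) := \{i\in[n]:\ i+1 \!\!\pmod n \text{ is in a lower row than $i$ in $T$}\}
\]
is a cyclic descent map. The plan is to deduce this directly from Proposition~\ref{def-strip}, which already establishes the existence of a cyclic descent map for arbitrary strips (and horizontal strips are a special case, all of whose connected components are single rows). By the remark following Definition~\ref{def:extension}, a cyclic descent map is uniquely determined by its restriction $\Des(T) = \cDes(T) \cap [n-1]$ together with the rule for whether $n \in \cDes(T)$; so it suffices to check that the explicit formula above agrees on $[n-1]$ with the usual descent set, and that its prescription for the element $n$ matches the one given in Proposition~\ref{def-strip}.

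First I would verify the restriction to $[n-1]$. For $i \in [n-1]$ we have $i+1 \pmod n = i+1$, so the condition ``$i+1$ is in a lower row than $i$'' is exactly the definition of $\Des(T)$ for the skew shape $\lambda/\mu$. Hence $\cDes(T) \cap [n-1] = \Des(T)$, which is condition (i) of Definition~\ref{def:extension}.

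Next I would check the rule for $n$. Here $n+1 \pmod n = 1$, so the formula places $n \in \cDes(T)$ precisely when the entry $1$ lies in a lower row than the entry $n$, i.e.\ when $n$ is strictly north of $1$. In a horizontal strip every connected component is a single row, so the alternative condition in Proposition~\ref{def-strip} (that $1$ and $n$ lie in the same vertical component) never occurs: there are no vertical components. Thus the two rules for $n$ coincide, and the map defined in Corollary~\ref{def-hor} is identical to the one produced by Proposition~\ref{def-strip}. Since the latter is already a cyclic descent map, condition (ii) of Definition~\ref{def:extension} (closure of the multiset under cyclic rotation) is inherited automatically, completing the proof.

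I expect no genuine obstacle here, as the statement is essentially a specialization and reformulation of Proposition~\ref{def-strip}. The only point requiring a moment's care is confirming that the two descriptions of the $n$-rule agree: one must observe that ``$n$ strictly north of $1$'' together with ``$1,n$ in the same vertical component'' collapses to the single condition ``$1$ in a lower row than $n$'' once vertical components are absent. If one preferred a self-contained argument not invoking Proposition~\ref{def-strip}, one could instead reprove equivariance directly by showing $\cDes(p^{-1}T) = 1 + \cDes(T)$ via the promotion operator $\bij = p^{-1}$ as in Corollary~\ref{cor:strips_action}, but the shortest route is simply to note that the formula unpacks the strip definition in the horizontal case.
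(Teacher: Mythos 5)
Your proposal is correct and takes essentially the same route as the paper, which states this as an immediate specialization of Proposition~\ref{def-strip} without a separate proof: the restriction to $[n-1]$ is the usual descent set by definition, and for horizontal strips the strip rule for $n$ collapses to ``$1$ lies in a lower row than $n$'' since $1$ and $n$ (for $n\ge 2$) can never share a vertical (single-cell) component. The only point worth making explicit, which you do, is that agreement of the two $n$-rules plus condition (i) forces the two maps to coincide, so condition (ii) is inherited.
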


\subsection{Cyclic super-descents for words and cyclic descents for strips}\label{sec:words}

The cyclic descent map on SYT of a strip shape can be equivalently described in terms of a {\em cyclic super-descent} map on words with bi-colored letters, as follows.

Let $[m]^n$ be the set of all words $w = w_1 \cdots w_n$ of length $n$ over the alphabet $[m]$. Let $p:[m]^n\rightarrow [m]^n$ be the cyclic rotation defined by
\[
p(w_1 \cdots w_n) = w_n w_1 \cdots w_{n-1}.
\]
We now assume that the letters in the alphabet $[m]$ are colored with two colors; equivalently, partitioned into two complementary subsets. 
Let $N \subseteq [m]$ be one of these subsets.

\begin{definition}\label{def:cDes_N}
Fix a subset $N \subseteq [m]$.
\begin{enumerate}
\item
Define the {\em $N$-super-descent} map, $\Des_N:[m]^n \rightarrow 2^{[n-1]}$, by  
 \[
\Des_N(w) := \{1 \le i \le n-1 \,:\, \text{either}\ w_i > w_{i+1} \text{ or } w_i = w_{i+1} \in N\}.
\]

\item
Define the {\em cyclic $N$-super-descent} map,  $\cDes_N:[m]^n \rightarrow 2^{[n]}$, by  
\[
 \cDes_N(w) := \{1 \le i \le n \,:\, \text{either } w_i > w_{i+1} \text{ or } w_i = w_{i+1} \in N\},
\]
where index addition is modulo $n$.
\end{enumerate}
\end{definition}

\begin{example}
For $m=3$ and $N=\{2\}$, 
$\Des_N(233122)=\{3,5\}$ and $\cDes_N(233122)=\{3,5,6\}$.
\end{example}

\begin{remark}
This notion of $N$-super-descents implicitly appears 
in~\cite{BR} for $N := [k]$ and in~\cite{RS} for any $N \subseteq [n]$;
see also~\cite{Remmel}.
The super-RSK algorithm maps words $w \in [m]^n$ to pairs $(P,Q)$ of tableaux of the same shape, where $P$ is $(m-|N|,|N|)$-semistandard and $Q$ is standard.
This agrees with Definition~\ref{def:cDes_N}.1, in the sense that $\Des_N(w) = \Des(Q)$ for every $w \in [m]^n$.
\end{remark}

\begin{remark}\
\begin{enumerate}
\item
The cyclic rotation  $p$ shifts cyclic $N$-super-descent sets; namely, for every $w \in [m]^n$,
\[
\cDes_N(p(w))=1+\cDes_N(w),
\]
where addition is modulo $n$.

\item
For every subset $N \subseteq [m]$, the pair $(\cDes_N,p)$, when restricted to the set of non-constant words in $[m]^n$, is a cyclic extension (as in Definition~\ref{def:cDes}) of $\Des_N$. 
Note that constant words $w = r \cdots r$ (for $r \in [m]$) always violate the non-Escher axiom: $\cDes(w)$ is $[n]$ for $r \in N$, and is $\varnothing$ otherwise.
\end{enumerate}
\end{remark}

\medskip

Now fix a strip shape $\gamma$ with $n$ cells and $m \ge 2$ connected components.
Let $N = N_\gamma \subseteq [m]$ be the set of all $r \in [m]$ such that the $r$-th component of $\gamma$ (from the bottom up) is vertical.
Define a map $f : \SYT(\gamma) \to [m]^n$ as follows. 
For each $T \in \SYT(\gamma)$, let $f(T)$ be the word $w = w_1 \cdots w_n \in [m]^n$ where $w_i = r$ if the letter $i$ is in the $r$-th connected component of $T$, for $1 \le i \le n$.

It is easy to see that the following properties hold for every strip shape $\gamma$:
\begin{enumerate}
\item 
The map $f$ is injective.

\item 
The map $f$ is $\Des$- and $\cDes$-preserving;
that is, for all $T \in \SYT(\gamma)$
\[
\Des_{N_\gamma}(f(T))=\Des(T)
\]
and 
\[
\cDes_{N_\gamma}(f(T))=\cDes(T).
\]

\item
The map $f$ commutes with cyclic rotation; namely, for every $T\in \SYT(\gamma)$
\[
p(f(T))=f(\jdt(1+T)).
\]
\end{enumerate}

\begin{example} 
Let $\gamma=(7,4,4,4,3)/(4,3,3) = (3) \oplus (1^3) \oplus (3)$ be a strip with $m=3$ and $N_\gamma=\{2\}$. Let
\[
T=\young(::::347,:::1,:::8,:::9,256) \in \SYT(\gamma), \  \qquad f(T)=213311322. 
\]
Then $\cDes(T)=\{1,4,7,8,9\}=\cDes_{\{2\}}(213311322)$ . 
\end{example}

\medskip

For a horizontal strip with two connected components ($m = 2$ and $N = \varnothing$), we can apply the above bijection to obtain a formula for the number of tableaux with a given cyclic descent set.

\begin{proposition}\label{prop:cDes-2-rows}
Let $(k) \oplus (n-k)$ be a horizontal strip with two components, containing $k$ cells in the lower component and $n-k$ cells in the upper one, with $1 \le k \le n-1$. 
For each nonempty subset $J = \{j_1 < \ldots < j_t\} \subseteq [n]$,
define the sequence of cyclic differences $(d_1, \ldots, d_t)$ by
\[
\begin{aligned}
d_1 &:= j_1 - j_t + n, \\
d_i &:= j_i - j_{i-1} \qquad (2 \le i \le t).
\end{aligned}
\]
Then 
\[
\sum_{k=1}^{n-1} \card{\{T \in \SYT((k) \oplus (n-k)) \,:\, \cDes(T) = J\}} \, q^{k}
= \prod_{i=1}^{t} 
\sum_{j=1}^{d_i-1} q^j
\]
In particular, a subset $J \subseteq [n]$ as above is a cyclic descent set for some $T \in \SYT((k) \oplus (n-k))$ if and only if 
$1 \le \card{J} \le \min(k,n-k)$ and
the elements of $J$ are cyclically non-adjacent (namely, $d_i \ge 2$ for all $i$).
\end{proposition}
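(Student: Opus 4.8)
The plan is to reduce the count to a combinatorial problem on binary circular words via the bijection $f$ of Section~\ref{sec:words}. Since $(k) \oplus (n-k)$ is a horizontal strip with $m=2$ components and $N_\gamma = \varnothing$, the map $f$ identifies $\SYT((k) \oplus (n-k))$ with the set of words $w \in \{1,2\}^n$ having exactly $k$ letters equal to $1$ (recording which entries lie in the lower row), and it is $\cDes$-preserving, so $\cDes(T) = \cDes_\varnothing(f(T))$. By Definition~\ref{def:cDes_N}, $\cDes_\varnothing(w)$ is the set of positions $i \in [n]$ (indices modulo $n$) with $w_i > w_{i+1}$; over the two-letter alphabet this means exactly $w_i = 2$ and $w_{i+1} = 1$, which also matches the description of $\cDes$ for horizontal strips in Corollary~\ref{def-hor}. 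Thus $\card{\{T \in \SYT((k)\oplus(n-k)) : \cDes(T) = J\}}$ equals the number of circular binary words with $k$ ones whose set of cyclic $21$-occurrences is precisely $J$.

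Next I would read off the block structure that $J$ imposes. Viewing $w$ around a cycle of $n$ positions, a cyclic $21$-occurrence at $i$ is exactly a position where a maximal run of $2$'s ends and a run of $1$'s begins; hence if this set of positions is $J = \{j_1 < \cdots < j_t\}$, then $w$ has exactly $t$ maximal runs of each letter, and each $j_i$ is forced to satisfy $w_{j_i} = 2$, $w_{j_i + 1} = 1$. Cutting the cycle at the points $j_1, \ldots, j_t$ partitions it into $t$ arcs, where the arc following $j_{i-1}$ and ending at $j_i$ has length $d_i$ (with indices cyclic, so the wrap-around arc ending at $j_1$ has length $d_1 = j_1 - j_t + n$); note $\sum_i d_i = n$. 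Having $J$ as the \emph{exact} cyclic descent set is equivalent to each arc being a nonempty run of $1$'s followed by a nonempty run of $2$'s, with no further $21$-occurrence inside. The key point is that these arc-fillings are independent of one another, so the count factorizes over $i$.

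It then remains to enumerate each arc while tracking the number of $1$'s by $q$. Within the arc of length $d_i$ one chooses the number $a_i$ of initial $1$'s; the constraint that both the $1$-run and the $2$-run be nonempty gives $1 \le a_i \le d_i - 1$, contributing $\sum_{j=1}^{d_i-1} q^{j}$, and multiplying over the independent arcs yields the stated product $\prod_{i=1}^{t} \sum_{j=1}^{d_i-1} q^{j}$, in which the coefficient of $q^k$ counts words with exactly $k$ ones. Finally, for the ``in particular'' statement I would extract when this product has a nonzero coefficient of $q^k$: each factor is nonzero precisely when $d_i \ge 2$ (cyclic non-adjacency of $J$), and since $\sum_i (d_i - 1) = n - t$ while each $a_i \ge 1$, the attainable values of $k = \sum_i a_i$ range over $t \le k \le n - t$, which is equivalent to $1 \le t = \card{J} \le \min(k, n-k)$.

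The argument is largely bookkeeping once the word model is in place; the one step needing care is the cyclic decomposition — namely checking that demanding $J$ to be the exact set of cyclic descents forces precisely the ``ones-then-twos'' shape on every arc (including the wrap-around arc of length $d_1$) and that the arcs may be filled independently. This independence is what turns the enumeration into a clean product, and I expect it to be the crux of the proof.
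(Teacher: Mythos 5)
Your proof is correct and follows essentially the same route as the paper's: encode each tableau as a binary word via the map $f$, observe that $\cDes(w)=J$ forces each cyclic interval $w_{j_{i-1}+1}\cdots w_{j_i}$ to have the form $1^{c_i}2^{d_i-c_i}$ with $1\le c_i\le d_i-1$, and read off the product formula and the nonvanishing condition $t\le k\le n-t$ from the independence of the arc fillings. The one step you flag as needing care (the exact-descent-set condition forcing the ones-then-twos shape on every arc, including the wrap-around one) is precisely the observation the paper states, so there is no gap.
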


\begin{proof}
Apply the map $f$ defined above to encode each $T \in \SYT((k) \oplus (n-k))$ by a word 
$w = w_1 \cdots w_n \in [2]^n$ where, for each $1 \le i \le n$,
\[
w_i := 
\begin{cases}
1, & \text{if entry } i \text{ is in the lower row of } T; \\
2, & \text{if entry } i \text{ is in the upper row of } T.
\end{cases}
\]
This encoding is a bijection from $\SYT((k) \oplus (n-k))$ to the set of words $w \in [2]^n$ in which $1$ appears $k$ times (and $2$ appears $n-k$ times), and this bijection preserves cyclic descents. It thus suffices to study $\cDes$ for such words.
Clearly, $\cDes(w) = J = \{j_1 < \ldots < j_t\}$ if and only if each cyclic interval $w_{j_{i-1}+1} \cdots w_{j_i}$ has the form $1^{c_i} 2^{d_i - c_i}$ for some $1 \le c_i \le d_i-1$, with $c_1 + \ldots + c_t = k$.
This yields the claimed formula.

Finally, to determine when a subset $J \subseteq [n]$ appears as a cyclic descent set, note that the factors in the RHS of the formula have the form $q + \ldots + q^{d_i-1}$.
Thus the coefficient of $q^k$ in the LHS of the formula is nonzero if and only if $d_i \ge 2$ for each $1 \le i \le t$ and also $t \le k \le n-t$, which is equivalent to $t \le \min(k,n-k)$.
\end{proof}

Another enumerative application of this bijection
will be presented in Section~\ref{subsec:2rows_gf}.

\section{Two-row straight shapes}
\label{sec:2rows}

In this section we provide an explicit combinatorial description of a cyclic descent map
for two-row straight shapes, proving Theorem~\ref{thm:2rows_intro}.

\subsection{From two-row shapes to lattice paths}\label{sec:paths}
It will be useful to view two-row SYT as lattice paths on the plane.
Let $\AAA_n$ denote the set of all lattice paths with $n$ steps
$U=(1,1)$ and $D=(1,-1)$ starting at the origin. Let $\AAA_n^h$
denote the set of paths in $\AAA_n$ whose ending height (that is,
the $y$-coordinate of its endpoint) is $h$. Define the {\em peak set} of a path $P\in\AAA_n$ as
\[
\Peak(P)=\{i: \text{the $i$th step of $P$ is a $U$ followed by a $D$}\}\subseteq[n-1].
\]
Let $\PPP_n\subset\AAA_n$ be the set of paths that do not go
below the $x$-axis, and let $\PPP_n^h=\PPP_n\cap\AAA_n^h$ be those that end at
height $h$. Let $\QQQ_n=\AAA_n\setminus\PPP_n$, and define $\QQQ_n^h$
similarly.

Consider the map $\Gamma$ from two-row SYT (possibly of skew shape) to lattice paths obtained by letting the $i$th step of the path be a $U$ if $i$ is the first row of the tableau, and a $D$ otherwise. Clearly, $\Gamma$ sends the statistic $\Des$ on tableaux to the statistic $\Peak$ on paths.
The map $\Gamma$ restricts to a bijection between $\SYT(n-k,k)$ and $\PPP_n^{n-2k}$. 
As a consequence, finding a cyclic descent map for $(n-k,k)$ as in Definition~\ref{def:extension} is equivalent to constructing a {\em cyclic peak map} on $\PPP_n^{n-2k}$, defined as follows.

\begin{definition}\label{def:cPeak} Let $\mathcal{B}\subseteq\AAA_n$. A {\em cyclic peak map} on $\mathcal{B}$ is a map $\cPeak:\PPP_n^{n-2k}\to 2^{[n]}$ such that
\begin{enumerate}[(i)]
\item $\cPeak(P)\cap[n-1]=\Peak(P)$ for all $P\in\mathcal{B}$ (i.e., $\cPeak$ {\em extends} $\Peak$);
\item the multiset $\multiset{\cPeak(P):P\in\mathcal{B}}$ is closed under cyclic rotation modulo $n$.
\end{enumerate} 
\end{definition}

Indeed, once a cyclic peak map $\cPeakP$ on $\PPP_n^{n-2k}$ has been constructed, the map $\cDes:\SYT(n-k,k)\to2^{[n]}$ defined by
$\cDes(T):=\cPeakP(\Gamma(T))$ for $T\in\SYT(n-k,k)$ will be a cyclic descent map for $(n-k,k)$.

We start with the much simpler problem of defining cyclic peak map $\cPeakA$ on $\AAA_n^h$, for any fixed $h$. Unlike $\PPP_n^h$, the set $\AAA_n^h$ is closed under cyclic shifts, namely the operation of moving the last step to the beginning of the path. Thus, for $P\in\AAA_n^h$, we can simply define
$$\cPeakA(P)=\begin{cases} \Peak(P)\sqcup\{n\} & \text{if $P$ ends with a $U$ and starts with a $D$,} \\ \Peak(P) & \text{otherwise.}\end{cases}$$
Clearly, $\cPeakA$ on $\AAA_n^h$ satisfies Definition~\ref{def:cPeak}. 

Next we relate the sets $\AAA_n$ and $\PPP_n$ by describing an injection $\varphi:\AAA_n^{n-2k+2}\to\AAA_n^{n-2k}$ with the property that 
the set of paths that are not in its image is $\AAA_n^{n-2k}\setminus \QQQ_n^{n-2k}=\PPP_n^{n-2k}$.

\begin{lemma}\label{lem:varphi} For $1\le k\le n/2$, there is a $\Peak$-preserving bijection
$$\varphi:\AAA_n^{n-2k+2}\longrightarrow \QQQ_n^{n-2k}\subset \AAA_n^{n-2k}.$$
\end{lemma}

\begin{proof} Given $P\in\AAA_n^{n-2k+2}$, match $U$ and
$D$ steps that face each other, meaning that they are at the same height, $U$ is to the left of $D$, and the horizontal line
segment connecting their midpoints stays below the path. (Viewing $U$ and $D$ steps as opening and closing parentheses, respectively, 
this is equivalent to matching parentheses properly.) Note that the sequence of unmatched steps, from left to right, is of the form $D^iU^j$, and that $j>0$ because $n-2k+2>0$. Let $\varphi(P)$ be the path obtained from $P$ by changing its leftmost unmatched $U$ into a $D$. See Figure~\ref{fig:varphi} for an example. It is clear that $\Peak(P)=\Peak(\varphi(P))$, since the changed step is not part of a peak in either $P$ or $\varphi(P)$.

Since the leftmost unmatched $U$ of $P$ must start at height $0$ or lower, we have that
$\varphi(P)\in\QQQ_n^{n-2k}$. To show that $\varphi$ is a bijection between $\AAA_n^{n-2k+2}$ and $\QQQ_n^{n-2k}$,
we describe its inverse. Given a path in $\QQQ_n^{n-2k}$, match again $U$ and
$D$ steps that face each other, and then change the rightmost unmatched $D$,
which must exist because the path goes below the $x$-axis, into a $U$.
\end{proof}

\begin{figure}[htb]
\centering
    \begin{tikzpicture}[thick, scale=0.35]
    \def\U{(1,1)}     \def\D{(1,-1)}
    \draw (0,0) coordinate(d0)
      -- ++\U coordinate(d1)
      -- ++\D coordinate(d2)
      -- ++\D coordinate(d3)
      -- ++\U coordinate(d4)
      -- ++\U coordinate(d5) 
      -- ++\D coordinate(d6)
      -- ++\D  coordinate(d7)
      -- ++\U coordinate(d8)
      -- ++\U coordinate(d9)
      -- ++\U  coordinate(d10)
      -- ++\U  coordinate(d11)
      -- ++\D coordinate(d12) 
      -- ++\U coordinate(d13);
      \draw[densely dotted] (0,0)--(13,0);
      \draw[dotted,orange] (.5,0.5)--(1.5,0.5);
      \draw[dotted,orange] (4.5,0.5)--(5.5,0.5);
      \draw[dotted,orange] (3.5,-0.5)--(6.5,-0.5);
      \draw[dotted,orange] (10.5,2.5)--(11.5,2.5);
      \draw[green,line width=2.0pt] (d7)--(d8);
      \foreach \x in {0,...,13} {
        \draw (d\x) circle (1.5pt);
      }
      \draw (15,1) node [label=$\varphi$] {$\mapsto$};
      \begin{scope}[shift={(17,0)}]
    \draw (0,0) coordinate(d0)
      -- ++\U coordinate(d1)
      -- ++\D coordinate(d2)
      -- ++\D coordinate(d3)
      -- ++\U coordinate(d4)
      -- ++\U coordinate(d5) 
      -- ++\D coordinate(d6)
      -- ++\D  coordinate(d7)
      -- ++\D coordinate(d8)
      -- ++\U coordinate(d9)
      -- ++\U  coordinate(d10)
      -- ++\U  coordinate(d11)
      -- ++\D coordinate(d12) 
      -- ++\U coordinate(d13);
      \draw[densely dotted] (0,0)--(13,0);
      \draw[dotted,orange] (.5,0.5)--(1.5,0.5);
      \draw[dotted,orange] (4.5,0.5)--(5.5,0.5);
      \draw[dotted,orange] (3.5,-0.5)--(6.5,-0.5);
      \draw[dotted,orange] (10.5,0.5)--(11.5,0.5);
      \draw[green,line width=2.0pt] (d7)--(d8);
      \foreach \x in {0,...,13} {
        \draw (d\x) circle (1.5pt);
      }
      \end{scope}
    \end{tikzpicture}
\caption{An example of the bijection $\varphi:\AAA_n^{n-2k+2}\longrightarrow \QQQ_n^{n-2k}$.}
\label{fig:varphi}
\end{figure}

Unfortunately, $\varphi$ does not preserve $\cPeakA$ in general: there are paths $P\in\AAA_n^{n-2k+2}$ with $n\in\cPeakA(\varphi(P))$ but $n\notin\cPeakA(P)$.
We will require our map $\cPeakP$ on $\PPP_n^{n-2k}$ to satisfy that, for all $J\subseteq[n]$,
\begin{equation}\label{eq:cPeak'}
|\{P\in\PPP_n^{n-2k}:\cPeakP(P)=J\}|=|\{P\in\AAA_n^{n-2k}:\cPeakA(P)=J\}|-|\{P\in\AAA_n^{n-2k+2}:\cPeakA(P)=J\}|.
\end{equation}
This requirement, together with the fact that $\multiset{\cPeakA(P):P\in\AAA_n^h}$ is closed under cyclic rotation for every $h$,
will imply that condition (ii) of Definition~\ref{def:cPeak} holds for $\cPeakP$. 

\subsection{Cyclic descents on two-row straight shapes}\label{sec:combinatorial2row}

In the previous section we have reduced the construction of a cyclic descent map for $(n-k,k)$ to the construction of a cyclic peak set $\cPeakP$ on $\PPP_n^{n-2k}$ that extends $\Peak$ and satisfies Equation~\eqref{eq:cPeak'}.
In the rest of this section we assume that $2\le k\le n/2$, since we know by Proposition~\ref{prop:ribbons} that no cyclic descent map 
exists for $(n-1,1)$.

Fix $I\subseteq[n-1]$. Paths in $\AAA_n^{n-2k}$ with peak set $I$ can be partitioned as follows:
\begin{multline}\label{eq:union}
\{P\in\AAA_n^{n-2k}:\Peak(P)=I\}\ =\{P\in\PPP_n^{n-2k}:\Peak(P)=I\}\sqcup\{P\in\QQQ_n^{n-2k}:\Peak(P)=I\}\\
=\{P\in\PPP_n^{n-2k}:\cPeakA(P)=I\}\sqcup\{P\in\QQQ_n^{n-2k}:\cPeakA(P)=I\}\sqcup\{P\in\QQQ_n^{n-2k}:\cPeakA(P)=I\sqcup\{n\}\},
\end{multline}
where the last equality uses the fact that every $P\in\PPP_n^{n-2k}$
starts with a $U$ step, and thus satisfies $n\notin\cPeakA(P)$.

\begin{figure}[htb]
\centering
\begin{tikzpicture}[scale=1.1]
\draw (0,0) rectangle (6,2.5);
\fill[blue!30!white,postaction={pattern=north east lines}] (0,1) rectangle (6,1.5);
\fill[blue!30!white] (0,1.5) rectangle (6,2.5);
\draw (0,1) rectangle (6,2.5);
\filldraw[fill=yellow!40!white] (0,0) rectangle (6,1);
\draw (3,1.75) node {$\{P\in\AAA_n^{n-2k+2}:\cPeakA(P)=I\}$};
\draw (3,.5) node {$\{P\in\AAA_n^{n-2k+2}:\cPeakA(P)=I\sqcup\{n\}\}$};
\draw [decorate,decoration={brace,mirror,amplitude=5pt},xshift=4pt] (6,0) -- (6,2.5);
\draw[->] (6.5,1.25)-- node[above]{$\varphi$} node[below]{$\sim$} (7.5,1.25);
\draw (3,3.25) node {$\{P\in\AAA_n^{n-2k+2}:\Peak(P)=I\}$};

\begin{scope}[shift={(8,0)}]
\draw [decorate,decoration={brace,amplitude=5pt},xshift=-4pt] (0,0) -- (0,2.5);
\draw (0,0) rectangle (6,3.5);
\filldraw[fill=red!30!white] (0,2.5) rectangle (6,3.5);
\filldraw[fill=blue!30!white] (0,1.5) rectangle (6,2.5);
\fill[yellow!40!white,postaction={pattern=north east lines}] (0,1) rectangle (6,1.5);
\fill[yellow!40!white] (0,0) rectangle (6,1);
\draw (0,0) rectangle (6,1.5);
\draw (3,3) node {$\{P\in\PPP_n^{n-2k}:\cPeakA(P)=I\}$};
\draw (3,2) node {$\{P\in\QQQ_n^{n-2k}:\cPeakA(P)=I\}$};
\draw (3,.75) node {$\{P\in\QQQ_n^{n-2k}:\cPeakA(P)=I\sqcup\{n\}\}$};
\draw (3,4.25) node {$\{P\in\AAA_n^{n-2k}:\Peak(P)=I\}$};
\end{scope}
\end{tikzpicture}
\caption{The behavior of $\varphi$ with respect to $\cPeakA$.}
\label{fig:varphi_blocks}
\end{figure}

By Lemma~\ref{lem:varphi}, $\varphi$ gives a bijection between
$\{P\in\AAA_n^{n-2k+2}:\Peak(P)=I\}$ and
$\{P\in\QQQ_n^{n-2k}:\Peak(P)=I\}$.
Even though $\varphi$ does not preserve $\cPeakA$ in general
, the next lemma describes precisely when it does. The paths with $\cPeakA(\varphi(P))=\cPeakA(P)\sqcup\{n\}$ correspond to the striped region in Figure~\ref{fig:varphi_blocks}.

\begin{lemma}\label{lem:cPeak}
For $2\le k\le n/2$ and $P\in \AAA_n^{n-2k+2}$, 
$$\cPeakA(\varphi(P))=\begin{cases} \cPeakA(P)\sqcup\{n\} & \text{if $P$ is of the form $P=URU$ for some $R\in\PPP_{n-2}^{n-2k}$},\\
\cPeakA(P) & \text{otherwise.} \end{cases}$$
\end{lemma}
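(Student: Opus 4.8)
The plan is to exploit the fact, established in the proof of Lemma~\ref{lem:varphi}, that $\Peak(\varphi(P)) = \Peak(P)$, since the single step altered by $\varphi$ (the leftmost unmatched $U$) is never part of a peak. Consequently $\cPeakA(P)$ and $\cPeakA(\varphi(P))$ already agree on $[n-1]$, and the entire lemma reduces to deciding, for each $P$, whether $n$ lies in one of these sets but not the other. By the definition of $\cPeakA$, this is governed solely by the first and last steps of $P$ and of $\varphi(P)$: recall that $n \in \cPeakA(Q)$ if and only if $Q$ starts with a $D$ and ends with a $U$.

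First I would show that $\varphi$ leaves the last step unchanged. Since $P \in \AAA_n^{n-2k+2}$ ends at height $n-2k+2 \ge 2$ (using $k \le n/2$), the excess of unmatched $U$ steps over unmatched $D$ steps equals the ending height and is therefore at least $2$; in particular $P$ has at least two unmatched $U$ steps. As $\varphi$ rewrites only the \emph{leftmost} unmatched $U$, and the last step of $P$—if it is a $U$—is necessarily the \emph{rightmost} unmatched $U$ (a terminal $U$ can have no matching $D$ to its right, so it is unmatched), the altered step is never the last one. Hence $P$ and $\varphi(P)$ share the same final step.

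Next I would analyze the first step. The map $\varphi$ changes the first step precisely when that step is itself the leftmost unmatched $U$, equivalently when $P$ begins with an \emph{unmatched} $U$; in that event $\varphi(P)$ begins with a $D$, and in all other cases $\varphi(P)$ begins with the same step as $P$. Running the resulting case split—combined with the fact that the last steps agree—shows that $n \in \cPeakA(\varphi(P)) \setminus \cPeakA(P)$ exactly when $P$ begins with an unmatched $U$ and ends with a $U$, while $\cPeakA(\varphi(P)) = \cPeakA(P)$ in every other case: if $P$ starts with a $D$, or with a matched $U$, then $P$ and $\varphi(P)$ have the same first step, so the membership of $n$ is unaffected.

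It remains to identify the condition ``$P$ begins with an unmatched $U$ and ends with a $U$'' with ``$P = URU$ for some $R \in \PPP_{n-2}^{n-2k}$''. Writing $h_0 = 0, h_1, \dots, h_n$ for the successive heights of $P$, the first step is an unmatched $U$ exactly when $h_j \ge 1$ for all $1 \le j \le n$, since otherwise the first $U$ would be matched to the first return to height $0$. If in addition the last step is a $U$, then $P = URU$, and the middle portion $R$—shifted down by $1$—starts and stays at height $\ge 0$ and, because $h_{n-1} = n-2k+1$, ends at height $n-2k$; thus $R \in \PPP_{n-2}^{n-2k}$. The converse is a direct check: prepending and appending a $U$ to such an $R$ keeps every intermediate height $\ge 1$, forcing the initial $U$ to be unmatched. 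I expect the main obstacle to be exactly this last translation, namely justifying rigorously that the first $U$ is unmatched if and only if the path never returns to height $0$, which requires careful handling of the face-to-face matching that defines $\varphi$.
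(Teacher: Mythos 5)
Your proposal is correct and follows essentially the same route as the paper's proof: both reduce the problem to tracking the first and last steps via the $\Peak$-preservation of $\varphi$, observe that the leftmost unmatched $U$ cannot be the last step because the ending height $n-2k+2\ge 2$ forces at least two unmatched $U$ steps, and then identify the only case where membership of $n$ changes with the condition that the first step is an unmatched $U$ and the last step is a $U$, i.e.\ $P=URU$ with $R\in\PPP_{n-2}^{n-2k}$. Your write-up simply supplies more detail than the paper's (in particular the careful justification that the first $U$ is unmatched exactly when the path never returns to height $0$), which is fine.
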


\begin{proof}
Recall from Lemma~\ref{lem:varphi} that $\varphi$ is $\Peak$-preserving, and $\varphi(P)$ is obtained from $P\in \AAA_n^{n-2k+2}$ by
changing the leftmost unmatched $U$ of $P$ into a $D$. Since
$n-2k\ge 0$, the leftmost unmatched $U$ is never the last step
of $P$. Therefore, $\varphi$ does not change the last step.

It follows that in order to have
$\cPeakA(\varphi(P))\neq\cPeakA(P)$, $\varphi$ must change the first
step of $P$ (from a $U$ to a $D$) and $P$ must end with a $U$. For
the first step of $P$ to be the leftmost unmatched $U$, we must
have $P=URU$ for some $R\in\PPP_{n-2}^{n-2k}$. In this case,
$\varphi(P)=DRU$, and thus $n\in\cPeakA(\varphi(P))$ but
$n\notin\cPeakA(P)$.
\end{proof}

Our strategy to define $\cPeakP$ satisfying Equation~\eqref{eq:cPeak'} will be to declare that $n\in\cPeakP(P)$ for certain paths $P\in\PPP_n^{n-2k}$, making the number of paths with this property equal to the number of paths $P\in \AAA_n^{n-2k+2}$ for which $\cPeakA(P)$ and $\cPeakA(\varphi(P))$ disagree.

Provided that $\cPeakP$ extends $\Peak$, in order for Equation~\eqref{eq:cPeak'} to hold for all $J\subseteq[n]$, it is enough if it holds for sets $J$ containing $n$. This is because, for $I\subseteq[n-1]$, having $\cPeakP(P)$ equal to $I$ or $I\sqcup\{n\}$  is equivalent to having $\Peak(P)=I$, and
$|\{P\in\PPP_n^{n-2k}:\Peak(P)=I\}|=|\{P\in\AAA_n^{n-2k}:\Peak(P)=I\}|-|\{P\in\AAA_n^{n-2k+2}:\Peak(P)=I\}|$ by Lemma~\ref{lem:varphi}.

For $J=I\sqcup\{n\}$, Equation~\eqref{eq:cPeak'} becomes
\begin{multline}\label{eq:size}
|\{P\in\PPP_n^{n-2k}:\cPeakP(P)=I\sqcup\{n\}\}|\\
=|\{P\in\AAA_n^{n-2k}:\cPeakA(P)=I\sqcup\{n\}\}|-|\{P\in\AAA_n^{n-2k+2}:\cPeakA(P)=I\sqcup\{n\}\}|\\
=|\{P\in\QQQ_n^{n-2k}:\cPeakA(P)=I\sqcup\{n\}\}|-|\{P\in\AAA_n^{n-2k+2}:\cPeakA(P)=I\sqcup\{n\}\}|\\
=|\{P: P=URU \text{ for some } R\in\PPP_{n-2}^{n-2k} \text{ and }
\Peak(P)=I\}|,
\end{multline}
where we have used the decomposition~\eqref{eq:union} and Lemma~\ref{lem:cPeak}.

To define $\cPeakP$ satisfying condition \eqref{eq:size}, we will find a subset of $\{P\in\PPP_n^{n-2k}:\Peak(P)=I\}$ of size
equal to the right-hand side of \eqref{eq:size} (corresponding to the striped region in Figure~\ref{fig:varphi_blocks}); and then let $\cPeakP(P):=I\cup\{n\}$ for $P$ in the subset, and $\cPeakP(P):=I$ otherwise. To find such subset, we construct
an injection
$$\psi:\{P: P=URU \text{ for some } R\in\PPP_{n-2}^{n-2k} \text{ and } \Peak(P)=I\}\longrightarrow \{P\in\PPP_n^{n-2k}:\Peak(P)=I\}$$
and let the aforementioned subset be its image.

Given $P$ in the domain of the map, let $\psi(P)$ be the path obtained by changing the
last occurrence of $DU$ in $P$ into $DD$. Note that $P$ must
contain a $D$ because $n-2k+2<n$, since $k>1$, and thus it must contains some $DU$ since the last letter of paths in the domain is $U$. Also, we have
$\psi(P)\in\PPP_n^{n-2k}$, and $\psi$ preserves $\Peak$, since the
last $DU$ of $P$ cannot be followed by a $D$. Figure~\ref{fig:psi} shows an example of this construction. 
To see that $\psi$
is an injection, note that $P$ can be recovered from $\psi(P)$ by
changing the rightmost $D$ into a $U$. 

\begin{figure}[htb]
\centering
    \begin{tikzpicture}[thick, scale=0.35]
    \def\U{(1,1)}     \def\D{(1,-1)}
    \draw (0,0) coordinate(d0)
      -- ++\U coordinate(d1)
      -- ++\U coordinate(d2)
      -- ++\U coordinate(d3)
      -- ++\D coordinate(d4)
      -- ++\U coordinate(d5) 
      -- ++\D coordinate(d6)
      -- ++\D  coordinate(d7)
      -- ++\U coordinate(d8)
      -- ++\U coordinate(d9);
     \draw[densely dotted] (0,0)--(9,0);
      \draw[blue,line width=2.0pt] (d6)--(d7);
      \draw[blue,line width=2.0pt] (d7)--(d8);
      \foreach \x in {0,...,9} {
        \draw (d\x) circle (1.5pt);
      }
      \draw (11,1) node [label=$\psi$] {$\mapsto$};
      \begin{scope}[shift={(13,0)}]
       \draw (0,0) coordinate(d0)
      -- ++\U coordinate(d1)
      -- ++\U coordinate(d2)
      -- ++\U coordinate(d3)
      -- ++\D coordinate(d4)
      -- ++\U coordinate(d5) 
      -- ++\D coordinate(d6)
      -- ++\D  coordinate(d7)
      -- ++\D coordinate(d8)
      -- ++\U coordinate(d9);
      \draw[densely dotted] (0,0)--(9,0);
      \draw[blue,line width=2.0pt] (d6)--(d7);
      \draw[blue,line width=2.0pt] (d7)--(d8);
      \foreach \x in {0,...,9} {
        \draw (d\x) circle (1.5pt);
      }
      \end{scope}
    \end{tikzpicture}
\caption{An example of the injection $\psi$.}
\label{fig:psi}
\end{figure}

For $P\in\PPP_n^{n-2k}$ with $\Peak(P)=I$, we declare that $n\in
\cPeakP(P)$ if and only if $P$ is in the image of $\psi$, which
can be described as the set of paths in $\PPP_n^{n-2k}$ where the
rightmost $D$ is preceded by another $D$, and after changing that
rightmost $D$ into a $U$, the resulting path is of the form
$P=URU$ with $R\in\PPP_{n-2}$. This is equivalent to the following definition. 
Recall that a {\em return} of a path in $\PPP_n$ is a $D$ step that ends on the $x$-axis.

\begin{definition}\label{def:cPeakP}
For $P\in\PPP_n^{n-2k}$ with $2\le k\le n/2$, let $n\in\cPeakP(P)$ if and only if the
rightmost $D$ of $P$ is preceded by another $D$, and $P$ has no returns before the rightmost $D$.
\end{definition}

We point out that, from the above definition, it is clear that having $n\in\cPeakP(P)$ forces $1,n-1\notin\cPeakP(P)$. This is consistent with the fact that a path cannot have two peaks in adjacent positions.
Via the bijection $\Gamma$ between $\SYT(n-k,k)$ and $\PPP_n^{n-2k}$, the statistic $\cPeakP$ translates into the following statistic $\cDes$ on $\SYT(n-k,k)$.

\begin{definition}\label{def:cDes2rowleft}
For $T\in\SYT(n-k,k)$ with $2\le k\le n/2$, let $n\in\cDes(T)$ if and only if both of the following conditions
hold:
\begin{enumerate}
\item the last two entries in the second row of $T$ are
consecutive, that is, $T_{2,k}=T_{2,k-1} + 1$, \item for every
$1<i<k$,  $T_{2,i-1} > T_{1,i}$.
\end{enumerate}
\end{definition}

\begin{example} According to Definition~\ref{def:cDes2rowleft}, we have $9\in\cDes\left(\young(12359,4678)\right)$, but $9\notin\cDes\left(\young(13469,2578)\right)$.
\end{example}

\begin{proof}[Proof of Theorem~\ref{thm:2rows_intro}]
Theorem~\ref{thm:2rows_intro} states that, for $2\le k\le n/2$, Definition~\ref{def:cDes2rowleft} gives a cyclic descent map $\cDes$ for $(n-k,k)$. By construction, this definition is equivalent to setting $\cDes(T)=\cPeakP(\Gamma(T))$ for all $T\in\SYT(n-k,k)$.
Having shown that $\cPeakP$ on $\PPP_n^{n-2k}$ satisfies Definition~\ref{def:cPeak}, it follows that $\cDes$ on $\SYT(n-k,k)$ satisfies Definition~\ref{def:extension}.
\end{proof}

Note that condition 2 in Definition~\ref{def:cDes2rowleft} is equivalent to the
fact that, after removing $T_{2,k}$ and $T_{1,1}=1$ from $T$ and applying jdt, the resulting tableau has shape $(n-k-1,k-1)$.

We conclude this section by comparing Definition~\ref{def:cDes2rowleft} with previous definitions of cyclic descent sets.

\begin{remark}\begin{enumerate}
\item For the shape $(n-2,2)$, Definition~\ref{def:cDes2rowleft} coincides with
the only cyclic descent map for hooks plus an internal cell described in
Theorem~\ref{thm:hooks_plus1_intro}.

\item On two-row rectangular shapes, Definition~\ref{def:cDes2rowleft}
coincides with Rhoades' definition~\cite{Rhoades}, restated right after Theorem~\ref{Rhoades_thm}. 
Indeed, for even $n$ and $T\in\SYT(n/2,n/2)$, Rhoades' definition declares that $n\in\cDes(T)$ if and only if, when applying $\jdt$ to $-1+T$, the path that $n$ follows goes east at every step until the last step, when it goes south. This happens if and only if $T_{2,i-1}>T_{1,i}$ for $1<i\le n/2$, which is equivalent to condition 2 of Definition~\ref{def:cDes2rowleft} plus the fact that $T_{2,n/2-1}=n-1$, which translates into condition~1. 
\end{enumerate}
\end{remark}

\subsection{The bijection $\bij$ for two-row straight shapes}
\label{sec:2rows_action}

Let $1<k\le n/2$. We will define a bijection $\bij:\SYT(n-k,k)\to\SYT(n-k,k)$ with the property that, with $\cDes$ as given by Definition~\ref{def:cDes2rowleft},
\begin{equation}\label{eq:rot2rows} \cDes(\bij T)=1+\cDes(T) \end{equation}
for all $T\in \SYT(n-k,k)$, with addition modulo $n$.

We first construct a bijection $\rho:\PPP^{n-2k}_n\to\PPP^{n-2k}_n$ such that
$\cPeakP(\rho P)=1+\cPeakP(P)$ for all $P\in\PPP^{n-2k}_n$, with addition modulo $n$. Let $\PPP=\bigcup_{n\ge0}\PPP_n$ and $\PPP^0=\bigcup_{n\ge0}\PPP^0_n$.
Given $P\in\PPP^{n-2k}_n$, define $\rho P$ by considering four cases:
\begin{enumerate}
	\item If $n\in\cPeakP(P)$, let $\rho P$ be the path obtained by moving the last $D$ of $P$ and inserting it as the second step of the path. In other words, write $P=UQDDU^i$ for some $i\ge0$ and some path $Q$, and let $\rho P=UDQDU^i$. 
	\item If $n-1\in\cPeakP(P)$, write $P=QDU^{i+1}D$ for some $i\ge0$ and some path $Q\in\PPP$, and let $\rho P=UQDDU^i$.
	\item If $n-1,n\notin\cPeakP(P)$ and the last $D$ of $P$ is preceded by a $U$, let $\rho P$ be the path obtained by simply moving the last step of $P$ (which is necessarily a $U$) to the beginning.
	\item If $n-1,n\notin\cPeakP(P)$ and the last $D$ is preceded by a $D$, write $P=RUQDDU^i$ for some $i\ge0$, $R\in\PPP^0$ and $Q\in\PPP$ (that is, the last return of $P$ not including the final $D$ occurs at the end of $R$), and let $\rho P=URDQDU^i$. (Note that $R$ is never empty because, since $n\notin\cPeakP(P)$, $P$ must have some return before the last $D$.)
\end{enumerate}

To prove that $\rho$ is a bijection, we describe its inverse in each of the four cases above, for given $P'\in\PPP^{n-2k}_n$, as follows:
\begin{enumerate}
	\item If $1\in\cPeakP(P')$, then $\rho^{-1}P'$ is the path obtained by moving the first $D$ of $P'$ (which is the second step) and inserting it right after the last $D$ of $P'$.
	\item If $n\in\cPeakP(P')$, write $P'=UQDDU^i$ for some $i\ge0$ and some path $Q\in\PPP$, and let $\rho^{-1}P'=QDU^{i+1}D$.
	\item If $1,n\notin\cPeakP(P')$ and $P'$ has no returns, then $\rho^{-1}P'$ is the path obtained by simply moving the first step of $P$ (which is necessarily a $U$) to the end.
	\item If $1,n\notin\cPeakP(P)$ and $P'$ has returns, write $P'=URDQDU^i$ for some $i\ge0$, $R\in\PPP^0$ and $Q\in\PPP$ (that is, the first return of $P'$ is immediately before $Q$), and let $\rho^{-1}P'=RUQDDU^i$.
\end{enumerate}

It is easy to check from the above definition that $\cPeakP(\rho P)=1+\cPeakP(P)$ in all four cases. Letting $\bij=\Gamma^{-1}\circ\rho\circ\Gamma$, we obtain a bijection $\bij:\SYT(n-k,k)\to\SYT(n-k,k)$ satisfying Equation~\eqref{eq:rot2rows}.

Next we describe $\bij$ directly in terms of SYT as follows. The three cases below correspond to the four cases above, with 1 and 4 consolidated into one new case 1'. For $T\in\SYT(n-k,k)$, let $x$ denote the last entry in the second row of $T$, and let $1+T_{\le x}$ be the tableau obtained by adding $1$ modulo $x$ to the entries of $T$ that are smaller or equal to $x$ (and leaving the entries bigger than $x$ unchanged). Given $T\in\SYT(n-k,k)$, construct $\bij T$ by considering the following cases, which depend on whether the last two entries of the second row of $T$ are consecutive or not:
\begin{enumerate}
	\item[1'.] If they are consecutive, let $\bij T=\jdt(1+T_{\le x})$.
	\item[2.] If they are not consecutive and $n$ is in the first row of $T$, let $\bij T=\jdt(1+T)$.
	\item[3.]  If they are not consecutive and $n$ is in the second row of $T$, let $\bij T$ be the tableau obtained from $1+T$ as follows: switch $1$ (which is the last entry in the second row of $1+T$) with $y+1$, where $y$ is the entry immediately to the left of $1$; then apply $\jdt$ as usual.
\end{enumerate}

\begin{remark}
	When $n$ is even and $k=n/2$, the above definition of $\bij T$ coincides with $\jdt(1+T)$ in all cases.
\end{remark}

\begin{example} Below are two orbits of the $\bbz$-actions on $\SYT(5,4)$ and $\SYT(6,3)$ generated by $\bij$, respectively. Cyclic descents in each SYT are marked in boldface \textcolor{red}{\bf red}.
	\begin{multline*}
T=\young(\rone\rthree56\rseven,2489)\overset{\bij}{\mapsto}\young(1\rtwo\rfour7\reight,3569)\overset{\bij}{\mapsto}\young(12\rthree\rfive\rnine,4678)\overset{\bij}{\mapsto}\young(\rone3\rfour\rsix9,2578)\overset{\bij}{\mapsto}\young(1\rtwo\rfive\rseven9,3468)\\
	\overset{\bij}{\mapsto}\young(12\rthree\rsix\reight,4579)\overset{\bij}{\mapsto}\young(123\rfour\rseven,568\rnine)\overset{\bij}{\mapsto}\young(\rone34\rfive\reight,2679)\overset{\bij}{\mapsto}\young(1\rtwo45\rsix,378\rnine){\mapsto}\young(\rone\rthree56\rseven,2489)=T
	\end{multline*}
	\begin{multline*}
	T=\young(\rone3\rfour\rsix89,257)\overset{\bij}{\mapsto}\young(1\rtwo4\rfive\rseven9,368)\overset{\bij}{\mapsto}\young(12\rthree5\rsix\reight,479)\overset{\bij}{\mapsto}\young(123\rfour6\rseven,58\rnine)\overset{\bij}{\mapsto}\young(\rone34\rfive7\reight,269)\\
	\overset{\bij}{\mapsto}\young(1\rtwo45\rsix\rnine,378)\overset{\bij}{\mapsto}\young(\rone\rthree56\rseven9,248)\overset{\bij}{\mapsto}\young(1\rtwo\rfour67\reight,359)\overset{\bij}{\mapsto}\young(12\rthree\rfive8\rnine,467)\overset{\bij}{\mapsto}\young(\rone3\rfour\rsix89,257)=T
	\end{multline*}
\end{example}

Despite the above two examples, it is not true in general that $\bij^n T=T$ for all $T\in\SYT(n-k,k)$, and so $\bij$ does not generate a $\ZZ_n$-action. For example, letting $T=\young(13479,2568)$, we have
$\bij^9 T=\young(13467,2589)\neq T$. In this case, $\bij^6 T=T$.

\subsection{Generating functions for $\cdes$ over two-row straight shapes}
\label{subsec:2rows_gf}

For a positive integer $n$, denote $n_1 := \lfloor n/2 \rfloor$ and $n_2 := \lceil n/2 \rceil$, so that $n_1 \le n_2$ and $n_1 + n_2 = n$.
Let $\TTT$ be the set of all straight SYT with $n$ cells whose shape has at most two rows.
Recall the definition $\des(T) := \card{\Des(T)}$ of the {\em descent number} of $T \in \TTT$.
The following result appears, for example, in~\cite[Theorem 3.2]{BBES}).
\begin{proposition}\label{t.gf_des} We have
\[
\TTT^{\des}(q) := \sum_{T \in \TTT} q^{\des(T)}
= \sum_{d=0}^{n_1} \binom{n_1}{d} \binom{n_2}{d} q^d.
\]
\end{proposition}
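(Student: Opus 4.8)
The plan is to transport the statement to lattice paths via the bijection $\Gamma$ of Section~\ref{sec:paths} and then to exploit the peak-preserving map $\varphi$ of Lemma~\ref{lem:varphi}; the identity is classical (see \cite{BBES}), but this route keeps it self-contained. Since $\Gamma$ restricts to a bijection $\SYT(n-k,k)\to\PPP_n^{n-2k}$ carrying $\Des$ to $\Peak$, taking the union over $0\le k\le n_1$ identifies $\TTT$ with $\PPP_n=\bigsqcup_{k}\PPP_n^{n-2k}$ and turns $\des(T)=\card{\Des(T)}$ into $\card{\Peak(\Gamma T)}$. Hence it suffices to prove the path identity $\sum_{P\in\PPP_n}q^{\card{\Peak(P)}}=\sum_{d\ge 0}\binom{n_1}{d}\binom{n_2}{d}q^d$.

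I would first record the peak-refined consequence of Lemma~\ref{lem:varphi}: for each ending height $h\ge 0$ with $h\equiv n\pmod 2$, the map $\varphi$ is a $\Peak$-preserving bijection $\AAA_n^{h+2}\to\QQQ_n^{h}$, so, using the disjoint decomposition $\AAA_n^h=\PPP_n^h\sqcup\QQQ_n^h$,
\[
\sum_{P\in\PPP_n^h}q^{\card{\Peak(P)}}
=\sum_{P\in\AAA_n^h}q^{\card{\Peak(P)}}-\sum_{P\in\AAA_n^{h+2}}q^{\card{\Peak(P)}}
\]
(the boundary case $h=n$ holding trivially, as $\QQQ_n^n=\AAA_n^{n+2}=\varnothing$). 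Summing over the admissible heights $h=n_2-n_1,\,n_2-n_1+2,\dots,n$ makes the right-hand side telescope; since $\AAA_n^h=\varnothing$ for $h>n$, all terms cancel except the bottom one, yielding $\sum_{P\in\PPP_n}q^{\card{\Peak(P)}}=\sum_{P\in\AAA_n^{\,n_2-n_1}}q^{\card{\Peak(P)}}$. The point of landing on $\AAA_n^{\,n_2-n_1}$ is that this \emph{unconstrained} set is exactly the set of all $U/D$ sequences with $n_2$ up-steps and $n_1$ down-steps (its ending height forces $\#U=n_2$, $\#D=n_1$, and $n_1+n_2=n$).

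It then remains to count such sequences by the number of factors $UD$, which is their peak statistic. I would use the standard run decomposition: a word with exactly $d$ factors $UD$ is uniquely of the form $D^{e_0}U^{c_1}D^{e_1}\cdots U^{c_d}D^{e_d}U^{c_{d+1}}$ with $c_1,\dots,c_d\ge 1$ and $e_1,\dots,e_d\ge 1$, while $c_{d+1},e_0\ge 0$. The condition $c_1+\dots+c_d+c_{d+1}=n_2$ has $\binom{n_2}{d}$ solutions, and the independent condition $e_0+e_1+\dots+e_d=n_1$ has $\binom{n_1}{d}$ solutions, so there are $\binom{n_1}{d}\binom{n_2}{d}$ such words; substituting this count gives the claimed formula. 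All steps are short, and the only point needing care is the telescoping: one must verify that $\varphi$ preserves the ordinary peak set $\Peak\subseteq[n-1]$ exactly (so $\card{\Peak}$ is preserved at every level) and that the admissible ending heights of $\PPP_n$ are precisely $n_2-n_1,n_2-n_1+2,\dots,n$, so that the collapse leaves nothing beyond $\AAA_n^{\,n_2-n_1}$. The Vandermonde identity $\sum_d\binom{n_1}{d}\binom{n_2}{d}=\binom{n}{n_1}=\card{\PPP_n}$ provides a convenient consistency check at $q=1$.
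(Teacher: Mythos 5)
Your proof is correct, and it is genuinely different from what the paper does: the paper offers no proof of Proposition~\ref{t.gf_des} at all, deferring to \cite[Theorem 3.2]{BBES}. Your telescoping identity
$\sum_{P \in \PPP_n} q^{\card{\Peak(P)}} = \sum_{P \in \AAA_n^{n_2-n_1}} q^{\card{\Peak(P)}}$,
obtained by summing the $\Peak$-refined consequence of Lemma~\ref{lem:varphi} over the ending heights $h = n_2-n_1,\, n_2-n_1+2, \ldots, n$, is the lattice-path counterpart of the identity the paper derives via \emph{jeu de taquin} (Lemma~\ref{lem:shapes} with $m=k=n_1$, as exploited in Equation~\eqref{eq:substract} and in the second and third proofs of Theorem~\ref{t.gf_cdes}): both reduce the constrained count over $\PPP_n \cong \TTT$ to the unconstrained count over $\AAA_n^{n_2-n_1}$, i.e., over binary words with $n_1$ letters $D$ and $n_2$ letters $U$. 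Your run decomposition $D^{e_0}U^{c_1}D^{e_1}\cdots U^{c_d}D^{e_d}U^{c_{d+1}}$ is unique under the stated positivity constraints, and the composition counts $\binom{n_2}{d}$ and $\binom{n_1}{d}$ are right; this is essentially the same device the paper uses in the second proof of Theorem~\ref{t.gf_cdes}, there adapted to cyclic peaks. What the citation buys is brevity; what your route buys is self-containedness and the reuse of $\varphi$, which the paper must introduce anyway to construct $\cPeakP$. The two points you flag as needing care (that $\varphi$ preserves the ordinary peak set exactly, and that the ending heights of $\PPP_n$ are exactly $n_2-n_1, n_2-n_1+2, \ldots, n$) are indeed the only ones, and both follow immediately from Lemma~\ref{lem:varphi} and parity.
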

We now prove an analogous formula for the {\em cyclic descent number} $\cdes(T) = \card{\cDes(T)}$. To this end, we replace the pair of hook shapes in $\TTT$, $h_0 = (n)$ and $h_1 = (n-1,1)$, which do not possess cyclic descent extensions, with the single shape $h' = (1) \oplus (n-1)$, and denote by $\hTTT$ the resulting set of SYT. 
Note that $\hTTT^{\des}(q) = \TTT^{\des}(q)$, since there is an obvious $\Des$-preserving bijection between $\SYT(h')$ and $\SYT(h_0) \cup \SYT(h_1)$.

\begin{theorem}\label{t.gf_cdes} We have
\[
\hTTT^{\cdes}(q) := \sum_{T \in \hTTT} q^{\cdes(T)}
= \sum_{d=1}^{n_1} \frac{n}{d} \binom{n_1-1}{d-1} \binom{n_2-1}{d-1} q^d.
\]
\end{theorem}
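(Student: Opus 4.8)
The plan is to bypass any direct construction of tableaux and instead determine the cyclic descent distribution $c_d := \card{\{T \in \hTTT : \cdes(T) = d\}}$ (the coefficient of $q^d$ in $\hTTT^{\cdes}(q)$) purely from the already-known descent distribution $D_d := \card{\{T \in \hTTT : \des(T) = d\}}$, using only the cyclic rotation invariance in Definition~\ref{def:extension}(ii). By Theorem~\ref{conj1} the fiber sizes $\card{\cDes^{-1}(J)}$, and hence the numbers $c_d$, are independent of the chosen cyclic descent map, so I may freely invoke the maps built earlier (Theorem~\ref{thm:2rows_intro} for the shapes $(n-k,k)$ with $2\le k\le n_1$, and Corollary~\ref{t.n_in_cDes} or Corollary~\ref{def-hor} for $h'=(1)\oplus(n-1)$). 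Since $\hTTT = \bigsqcup_{k=2}^{n_1}\SYT(n-k,k)\sqcup\SYT(h')$ and each summand has a rotation-closed multiset of cyclic descent sets, the full multiset $M:=\multiset{\cDes(T):T\in\hTTT}$ is also closed under the rotation $J\mapsto 1+J$ modulo $n$.

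The key step is an averaging argument. Let $M_d\subseteq M$ be the sub-multiset of sets of size $d$; it is rotation-closed (rotation preserves cardinality) and $\card{M_d}=c_d$. For fixed $i\in[n]$, rotation invariance shows that $\sum_{J\in M_d}\mathbf{1}[i\in J]$ does not depend on $i$; summing over $i$ gives $\sum_{i=1}^{n}\sum_{J\in M_d}\mathbf{1}[i\in J]=\sum_{J\in M_d}\card{J}=d\,c_d$, so in particular $b_d:=\card{\{T:\cdes(T)=d,\ n\in\cDes(T)\}}=\tfrac{d}{n}c_d$. Because $\des(T)=\cdes(T)-\mathbf{1}[n\in\cDes(T)]$, a tableau has $\des(T)=d$ exactly when either $\cdes(T)=d$ with $n\notin\cDes(T)$, or $\cdes(T)=d+1$ with $n\in\cDes(T)$; counting these disjoint cases yields
\[
D_d=(c_d-b_d)+b_{d+1}=\frac{n-d}{n}\,c_d+\frac{d+1}{n}\,c_{d+1},
\]
i.e.\ $nD_d=(n-d)c_d+(d+1)c_{d+1}$ for all $d\ge0$. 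Every $T\in\hTTT$ satisfies $\cdes(T)\ge1$ (indeed $\des(T)\ge1$ on the shapes $(n-k,k)$ with $k\ge2$, since their maximal column length is $2$, while $\cdes\equiv1$ on $\SYT(h')$), so $c_0=0$, and the recurrence then determines the entire sequence $(c_d)_{d\ge1}$ from $(D_d)$.

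It remains to feed in the descent distribution and verify the closed form. By Proposition~\ref{t.gf_des} together with the $\Des$-preserving bijection $\SYT(h')\leftrightarrow\SYT(h_0)\cup\SYT(h_1)$, we have $D_d=\binom{n_1}{d}\binom{n_2}{d}$. Setting $\tilde c_d:=\tfrac{n}{d}\binom{n_1-1}{d-1}\binom{n_2-1}{d-1}$ for $d\ge1$ and $\tilde c_0:=0$ (consistent, as $\binom{\cdot}{-1}=0$), I will check that $(\tilde c_d)$ obeys the same recurrence; by the uniqueness just noted this forces $c_d=\tilde c_d$ and proves the theorem. Substituting into $(n-d)\tilde c_d+(d+1)\tilde c_{d+1}=nD_d$, dividing by $n$, and then by $\binom{n_1-1}{d-1}\binom{n_2-1}{d-1}$, the identity collapses---via Pascal's rule $\binom{n_j}{d}=\binom{n_j-1}{d}+\binom{n_j-1}{d-1}$ and the ratio $\binom{n_j-1}{d}/\binom{n_j-1}{d-1}=(n_j-d)/d$---to the elementary equality $\tfrac{n_1-d}{d}+\tfrac{n_2-d}{d}+1=\tfrac{n-d}{d}$, which holds since $n_1+n_2=n$.

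This last binomial manipulation is the only genuine computation, and it is routine. The real content, and the step requiring the most care, is the averaging argument giving $b_d=\tfrac{d}{n}c_d$ and the case-bookkeeping relating $D_d$ to $c_d$ and $c_{d+1}$: one must justify that rotation-closure of $M$ is correctly inherited from the individual shapes (all with the same $n$, so that rotation modulo $n$ is uniform across the union), and that $c_0=0$ so that the forward recurrence pins the sequence down uniquely.
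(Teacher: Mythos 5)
Your proof is correct. It is closest in spirit to the paper's first proof: your recurrence $nD_d=(n-d)c_d+(d+1)c_{d+1}$ is precisely the coefficient-by-coefficient form of the differential equation $\tfrac{n\,\hTTT^{\des}(q)}{(1-q)^{n+1}}=\tfrac{d}{dq}\bigl[\tfrac{\hTTT^{\cdes}(q)}{(1-q)^{n}}\bigr]$ that the paper imports from \cite[Lemma 2.4]{ARR}. The difference is twofold. First, you prove that relation from scratch by the averaging/double-counting argument ($b_d=\tfrac{d}{n}c_d$ from rotation-closure of the multiset of cyclic descent sets, which is correctly inherited from the shape-by-shape decomposition of $\hTTT$ since all shapes have the same $n$), whereas the paper cites it; this makes your argument self-contained and exposes the combinatorial content of the ODE. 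Second, you solve the recurrence by verifying that the proposed closed form satisfies it together with $c_0=0$ (which does pin the sequence down, since the coefficient $d+1$ of $c_{d+1}$ never vanishes), whereas the paper integrates the ODE and carries out a substantially longer chain of binomial manipulations; your route trades a derivation for a verification and is computationally lighter. The paper's second and third proofs are genuinely bijective (via words in $[2]^n$ and via lattice paths) and yield the formula directly from orbit counting; your approach does not produce such a bijection but needs less machinery. The only point to tidy up is the division by $\binom{n_1-1}{d-1}\binom{n_2-1}{d-1}$ in the final check, which requires a separate (trivial) remark for the degenerate values of $d$ where these binomials vanish.
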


We present three proofs of this theorem. The first applies a differential equation;
the second applies the bijection to words from Section~\ref{sec:words};
and the third applies the bijection to lattice paths from Section~\ref{sec:paths}.

\begin{proof}[First Proof]
Let us first restate the formula for $\hTTT^{\des}(q)$ from Proposition~\ref{t.gf_des}.
\begin{align*}
\hTTT^{\des}(q) 
&= \sum_{d=0}^{n_1} \binom{n_1}{d} \binom{n_2}{d} q^d 
= \sum_{d=0}^{n_1} \sum_{k=0}^{d} \binom{n_1}{d} \binom{n_2}{d} \binom{d}{k} (q-1)^k \\
&= \sum_{k=0}^{n_1} \sum_{d=k}^{n_1} \binom{n_1}{d} \binom{n_2}{k} \binom{n_2 - k}{n_2 - d} (q-1)^k 
= \sum_{k=0}^{n_1} \binom{n_2}{k} \binom{n - k}{n_2} (q-1)^k.
\end{align*}
By \cite[Lemma 2.4 and Remark 2.5]{ARR},
\[
\frac{n \hTTT^{\des}(q)}{(1-q)^{n+1}} 
= \frac{d}{dq} \left[ \frac{\hTTT^{\cdes}(q)}{(1-q)^{n}} \right]
\]
and, moreover, the polynomial $\hTTT^{\cdes}(q)$ has no constant term.
Thus
\begin{align*}
\hTTT^{\cdes}(q)
&= (1-q)^n \int_{0}^{q} \frac{n \hTTT^{\des}(t)}{(1-t)^{n+1}} dt \\
&= (1-q)^n \int_{0}^{q} \sum_{k=0}^{n_1} \binom{n_2}{k} \binom{n - k}{n_2} (t-1)^k \cdot n (1-t)^{-(n+1)} dt \\
&= (1-q)^n \sum_{k=0}^{n_1} \binom{n_2}{k} \binom{n - k}{n_2} \frac{(-1)^k n}{n-k} \left[ (1-q)^{k-n} - 1 \right] \\
&= \sum_{k=0}^{n_1} \binom{n_2}{k} \binom{n - k - 1}{n_2 - 1} \frac{(-1)^k n}{n_2} \left[ (1-q)^{k} - (1-q)^{n} \right] \\
&= \sum_{k=0}^{n_1} \sum_{d=\max(k,1)}^{n_1} \binom{n_2}{k} \binom{n_1 - 1}{d - 1} \binom{n_2 - k}{n_2 - d} \frac{(-1)^k n}{n_2} \left[ (1-q)^{k} - (1-q)^{n} \right] \\
&= \sum_{d=1}^{n_1} \sum_{k=0}^{d} \binom{n_1 - 1}{d - 1} \binom{n_2}{d} \binom{d}{k} \frac{(-1)^k n}{n_2} \left[ (1-q)^{k} - (1-q)^{n} \right] \\
&= \sum_{d=1}^{n_1} \binom{n_1 - 1}{d - 1} \binom{n_2}{d} \frac{n}{n_2} \left[ ((q-1)+1)^d - (1-1)^d (1-q)^n \right] \\
&= \sum_{d=1}^{n_1} \frac{n}{d} \binom{n_1 - 1}{d - 1} \binom{n_2 - 1}{d - 1} q^d.
\qedhere
\end{align*}
\end{proof}

\begin{proof}[Second Proof]
Recall (from Lemma~\ref{lem:jdtDes} and Lemma~\ref{lem:shapes} with $m = k = n_1$) that jdt is a $\Des$-preserving map from  $\SYT((n_1) \oplus (n_2))$  to $\TTT$. As discussed above, $(1)\oplus (n-1)$ may replace the pair of shapes $(n-1,1)$ and $(n)$, so we can consider $\hTTT$ instead of $\TTT$.

By \cite[Lemma 2.2(iii)]{ARR},
the distribution of $\cDes$ is determined by the distribution of $\Des$.
Hence, the number of SYT in $\hTTT$ with $d$ cyclic descents is equal to the number of  SYT of shape $(n_1) \oplus (n_2)$ with $d$ cyclic descents; namely,
\begin{equation}\label{eq:n1n2}
\hTTT^{\cdes}(q) = \sum_{T\in (n_1)\oplus (n_2)} q^{\cdes(T)}. 
\end{equation}

Next, recall  the $\cDes$-preserving injection $f$ from Section~\ref{sec:words}, which encodes each $T \in \SYT((n_1) \oplus (n_2))$ by a word 
$w = w_1 \cdots w_n \in [2]^n$.
This encoding is a bijection from $\SYT((n_1) \oplus (n_2))$ to the set of words $w \in [2]^n$ in which $1$ appears $n_1$ times. 
If $n \in \cDes(w)$ then $w$ has the form $1^{k_1} 2^{m_1} 1^{k_2} 2^{m_2} \cdots 1^{k_d} 2^{m_d}$, where $d = \cdes(w)$ and $k_i \ge 1$, $m_i \ge 1$ for all $1 \le i \le d$. Also, $\sum_{i=1}^d k_i = n_1$ and $\sum_{i=1}^d m_i = n_2$.
There are
\[
\binom{n_1-1}{d-1}\binom{n_2-1}{d-1}
\]
solutions to this system of equations and inequalities. 
Since all $n$ rotations of a given word have the same cyclic descent number $d$, and exactly $d$ of these words have a cyclic descent at $n$, we have to multiply the above product by $n/d$.
This conclusion is valid even if $w$ has a nontrivial cyclic symmetry.
\end{proof}

\begin{proof}[Third Proof]
We use Equation~\eqref{eq:n1n2} and the bijection $\Gamma$ (defined in Section~\ref{sec:paths}) from $\SYT((n_1) \oplus (n_2))$ to $\AAA_n^{n_2-n_1}$, 
which in this case maps $\cDes$ to $\cPeakA$. 
It will be convenient to rotate the paths in $\AAA_n^{n_2-n_1}$ and view them as paths from $(0,0)$ to $(n_1,n_2)$ with steps $N=(1,0)$ and $E=(0,1)$. Peaks are then corners $NE$, and $n$ is a cyclic peak if and only if the path starts with $E$ and ends with $N$. 
As shown in~\cite[Theorem 3.2]{BBES}, a path from $(0,0)$ to $(n_1,n_2)$ having $k$ peaks is uniquely determined by any choice of values $0\le x_1<\dots<x_k\le n_1-1$ and $1\le y_1<\dots<y_k\le n_2$; indeed, such a choice corresponds to the path with peaks at coordinates $(x_1,y_1),\dots,(x_k,y_k)$. Besides, $n$ is a cyclic peak of the path if and only if $x_1\neq 0$ and $y_k\neq n_2$.
It follows that there are $\binom{n_1}{k}\binom{n_2}{k}$ paths with $k$ peaks, and $\binom{n_1-1}{k}\binom{n_2-1}{k}$ of those have an additional cyclic peak at $n$.

Thus, the number of paths with $d$ cyclic peaks is equal to the number of paths having $d-1$ peaks and a cyclic peak at $n$, namely $\binom{n_1-1}{d-1}\binom{n_2-1}{d-1}$, plus the number of those where all $d$ cyclic peaks are regular peaks, namely $\binom{n_1}{d}\binom{n_2}{d}-\binom{n_1-1}{d}\binom{n_2-1}{d}$. It follows that
$$\hTTT^{\cdes}(q)=\binom{n_1-1}{d-1}\binom{n_2-1}{d-1}+\binom{n_1}{d}\binom{n_2}{d}-\binom{n_1-1}{d}\binom{n_2-1}{d}=\frac{n}{d}\binom{n_1-1}{d-1}\binom{n_2-1}{d-1}.\qedhere$$
\end{proof}

\medskip

\begin{remark}\label{rmk:Narayana}
For odd $n$, writing $n=2m+1$, Theorem~\ref{t.gf_cdes} states that the number of SYT in $\hTTT$ with $d$ cyclic descents 
equals $n\,N(m,d)$, where $N(m,d)$ is the Narayana number, counting, among many other things, Dyck paths of semilength $m$ with $d$ peaks. 
\end{remark}

\medskip

The proofs of Proposition~\ref{t.gf_des} and Theorem~\ref{t.gf_cdes} can be generalized in a straightforward manner, replacing $n_1$ with any $1 \le k \le n_1$ (and $n_2$ with $n-k$).  
One deduces the following two results, refining the previous results to the level of single shapes.

\begin{proposition}\label{t.gf_des_1}
For any $1 \le k \le n/2$,
\[
\sum_{T \in \SYT((n-k,k))} q^{\des(T)}
= \sum_{d=0}^{k} \left[ \binom{k}{d} \binom{n-k}{d} -\binom{k-1}{d} \binom{n-k+1}{d} \right] q^d.
\]
\end{proposition}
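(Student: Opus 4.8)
The plan is to transport the statement to lattice paths and then exploit the inclusion--exclusion already packaged in Lemma~\ref{lem:varphi}. Recall from Section~\ref{sec:paths} that $\Gamma$ restricts to a bijection between $\SYT(n-k,k)$ and $\PPP_n^{n-2k}$ carrying $\Des$ to $\Peak$, so that
\[
\sum_{T \in \SYT((n-k,k))} q^{\des(T)} = \sum_{P \in \PPP_n^{n-2k}} q^{|\Peak(P)|}.
\]
It therefore suffices to count, for each $d$, the paths in $\PPP_n^{n-2k}$ with exactly $d$ peaks. First I would use the disjoint decomposition $\AAA_n^{n-2k} = \PPP_n^{n-2k} \sqcup \QQQ_n^{n-2k}$ together with the $\Peak$-preserving bijection $\varphi:\AAA_n^{n-2k+2}\to\QQQ_n^{n-2k}$ of Lemma~\ref{lem:varphi} (available for all $1\le k\le n/2$). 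This yields, for every $d$,
\[
|\{P\in\PPP_n^{n-2k}:|\Peak(P)|=d\}| = |\{P\in\AAA_n^{n-2k}:|\Peak(P)|=d\}| - |\{P\in\AAA_n^{n-2k+2}:|\Peak(P)|=d\}|,
\]
reducing everything to peak enumeration over the \emph{unrestricted} sets $\AAA_n^h$.

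Second, I would invoke the peak distribution on the free sets $\AAA_n^h$: a path with $a$ up-steps and $b$ down-steps has exactly $\binom{a}{d}\binom{b}{d}$ members with $d$ peaks. This is precisely the enumeration established in the third proof of Theorem~\ref{t.gf_cdes}, by rotating to $NE$-lattice paths and invoking \cite[Theorem 3.2]{BBES}; it is symmetric in $a$ and $b$, so the roles of the two step types are immaterial. Applying it to $\AAA_n^{n-2k}$, which has $n-k$ up-steps and $k$ down-steps, gives $\binom{n-k}{d}\binom{k}{d}$, while applying it to $\AAA_n^{n-2k+2}$, which has $n-k+1$ up-steps and $k-1$ down-steps, gives $\binom{n-k+1}{d}\binom{k-1}{d}$.

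Combining the two displays yields
\[
|\{P\in\PPP_n^{n-2k}:|\Peak(P)|=d\}| = \binom{n-k}{d}\binom{k}{d} - \binom{n-k+1}{d}\binom{k-1}{d},
\]
and summing against $q^d$ over $0\le d\le k$ (the factor $\binom{k}{d}$ vanishes for $d>k$) produces exactly the claimed formula. The only point requiring genuine care — the main obstacle, such as it is — is confirming that the free peak statistic on $\AAA_n^h$ really is distributed as $\binom{a}{d}\binom{b}{d}$ and that $\varphi$ preserves it; both facts are already supplied by Lemma~\ref{lem:varphi} and the BBES peak enumeration, so the argument is a short assembly rather than a new computation. (As a sanity check, the $d=0$ coefficient is $1-1=0$, matching the fact that every path in $\PPP_n^{n-2k}$ with $k\ge 1$ has at least one peak.)
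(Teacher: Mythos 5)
Your argument is correct, and it reaches the formula by a genuinely different route than the paper. The paper's proof stays on the tableau side: it uses Lemma~\ref{lem:jdtDes} and Lemma~\ref{lem:shapes} (i.e.\ the Littlewood--Richardson/jeu-de-taquin input) to identify $\SYT((k)\oplus(n-k))$ descent-preservingly with $\bigsqcup_{d=0}^{k}\SYT(n-d,d)$, telescopes by subtracting the analogous identity for $k-1$ to isolate $\SYT(n-k,k)$, and then evaluates the strip generating function via Proposition~\ref{t.gf_des}. You instead pass to lattice paths via $\Gamma$ and realize the same subtraction through the decomposition $\AAA_n^{n-2k}=\PPP_n^{n-2k}\sqcup\QQQ_n^{n-2k}$ and the $\Peak$-preserving matching bijection $\varphi$ of Lemma~\ref{lem:varphi}, then apply the BBES peak count $\binom{a}{d}\binom{b}{d}$ to the two free sets $\AAA_n^{n-2k}$ and $\AAA_n^{n-2k+2}$ (with $(a,b)=(n-k,k)$ and $(n-k+1,k-1)$ respectively, which you identify correctly). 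Since $\SYT((k)\oplus(n-k))$ corresponds to $\AAA_n^{n-2k}$ under the word/path encoding, the two proofs perform the same inclusion--exclusion between levels $k$ and $k-1$, but justify it by different bijections: the paper's route needs the jdt machinery of Section~\ref{sec:jdt} and, because it is phrased at the level of $\mathbf{x}^{\Des(T)}$ (Equation~\eqref{eq:substract}), feeds directly into the cyclic refinement in Theorem~\ref{t.gf_cdes_1}; your route is self-contained within the Section~\ref{sec:paths} machinery and avoids Littlewood--Richardson entirely, and since $\varphi$ preserves the full peak \emph{set} it would also deliver the set-valued refinement if needed. The one point worth stating explicitly rather than citing is the peak distribution $\binom{a}{d}\binom{b}{d}$ on $\AAA_n^h$, but as you note this is exactly \cite[Theorem 3.2]{BBES} as invoked in the third proof of Theorem~\ref{t.gf_cdes}, so no gap remains.
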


\begin{proof}
Lemma~\ref{lem:jdtDes} and Lemma~\ref{lem:shapes} (with $m = k \le n/2$) imply that
\[
\sum_{T \in \SYT((k) \oplus (n-k))} {\bf x}^{\Des(T)}
= \sum_{d=0}^{k} \,\sum_{T \in \SYT((n-d,d))} {\bf x}^{\Des(T)}.
\]
Subtracting the corresponding equation for $k-1$ gives 
\begin{equation}\label{eq:substract}
\sum_{T \in \SYT((k) \oplus(n-k))} {\bf x}^{\Des(T)} - \sum_{T \in \SYT((k-1) \oplus(n-k+1))} {\bf x}^{\Des(T)}
= \sum_{T \in \SYT((n-k,k))} {\bf x}^{\Des(T)}.
\end{equation}
The specialization $x_1 = \ldots = x_{n-1} = q$ together with the aforementioned generalization of Proposition~\ref{t.gf_des} yield the desired formula.
\end{proof}

\begin{theorem}\label{t.gf_cdes_1}
For any $2 \le k \le n/2$,
\[
\sum_{T \in \SYT((n-k,k))} q^{\cdes(T)}
= \sum_{d=1}^{k} \frac{n}{d} \left[ \binom{k-1}{d-1} \binom{n-k-1}{d-1} -\binom{k-2}{d-1} \binom{n-k}{d-1} \right] q^d.
\]
\end{theorem}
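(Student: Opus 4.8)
The plan is to mirror the proof of Proposition~\ref{t.gf_des_1}, replacing the descent statistic by the cyclic descent statistic throughout: first express the single shape $(n-k,k)$ as a ``difference'' of two disjoint strips at the level of $\Des$-distributions, then transport this difference to $\cdes$-distributions, and finally compute each strip's $\cdes$-generating function by the word encoding used in the Second Proof of Theorem~\ref{t.gf_cdes}.

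First I would invoke Lemma~\ref{lem:jdtDes} together with Lemma~\ref{lem:shapes} (taking $m=k$, so that $(n-k+m,k)/(m)=(k)\oplus(n-k)$, and using $k\le n/2$ so that $\min\{k,n-k\}=k$) to obtain a $\Des$-preserving bijection between $\SYT((k)\oplus(n-k))$ and $\bigsqcup_{d=0}^{k}\SYT(n-d,d)$. Performing the same reduction for $k-1$ and subtracting, exactly as in Equation~\eqref{eq:substract}, isolates the $d=k$ term:
\[
\sum_{T\in\SYT(n-k,k)} \mathbf{x}^{\Des(T)} = \sum_{T\in\SYT((k)\oplus(n-k))}\mathbf{x}^{\Des(T)} - \sum_{T\in\SYT((k-1)\oplus(n-k+1))}\mathbf{x}^{\Des(T)}.
\]
Specializing $x_1=\dots=x_{n-1}=q$ gives the corresponding identity for $\des$-generating functions.

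The crucial—and most delicate—step is to pass this subtraction from $\des$ to $\cdes$. Since the shapes $(k)\oplus(n-k)$ and $(k-1)\oplus(n-k+1)$ are strips, and $(n-k,k)$ is not a ribbon for $k\ge2$, all three admit cyclic descent maps; hence for each of them the $\cdes$-distribution is obtained from the $\des$-distribution by the same $n$-dependent \emph{linear} transformation, namely $g\mapsto(1-q)^n\int_0^q \frac{n\,g(t)}{(1-t)^{n+1}}\,dt$, as encoded by the integral relation of \cite[Lemma 2.4 and Remark 2.5]{ARR} (equivalently, by the determination statement \cite[Lemma 2.2(iii)]{ARR}). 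Because this transformation is linear and identical for all sets with $n$ cells, the displayed $\des$-identity yields
\[
\sum_{T\in\SYT(n-k,k)} q^{\cdes(T)} = \sum_{T\in\SYT((k)\oplus(n-k))}q^{\cdes(T)} - \sum_{T\in\SYT((k-1)\oplus(n-k+1))}q^{\cdes(T)}.
\]
I expect this to be the main obstacle, precisely because $\cDes$ is not defined on the individual straight shapes $\SYT(n-d,d)$ with $d\le 1$ appearing in the decomposition, so the subtraction cannot be carried out tableau-by-tableau and must instead be justified at the level of distributions via linearity of the $\des\to\cdes$ map.

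Finally I would compute each strip generating function by generalizing the Second Proof of Theorem~\ref{t.gf_cdes}: the $\cDes$-preserving encoding $f$ of Section~\ref{sec:words} identifies $\SYT((k)\oplus(n-k))$ with the words $w\in[2]^n$ having $k$ ones and $n-k$ twos, and a word with $n\in\cDes(w)$ and $\cdes(w)=d$ has the form $1^{a_1}2^{b_1}\cdots 1^{a_d}2^{b_d}$ with all $a_i,b_i\ge1$, $\sum_i a_i=k$ and $\sum_i b_i=n-k$; there are $\binom{k-1}{d-1}\binom{n-k-1}{d-1}$ of these, and the rotation argument contributes a factor $n/d$, giving $\sum_{d=1}^{k}\frac{n}{d}\binom{k-1}{d-1}\binom{n-k-1}{d-1}q^d$ (with $d$ ranging up to $\min\{k,n-k\}=k$). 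The analogous formula with $k$ replaced by $k-1$ gives the second strip's generating function, and subtracting—noting that the $d=k$ term of the latter vanishes since $\binom{k-2}{k-1}=0$—produces exactly the claimed expression.
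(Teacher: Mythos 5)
Your proposal is correct and follows essentially the same route as the paper: establish Equation~\eqref{eq:substract} via Lemmas~\ref{lem:jdtDes} and~\ref{lem:shapes}, transfer it from $\Des$ to $\cDes$ using the fact (from \cite[Lemma 2.2(iii)]{ARR}) that the cyclic descent distribution is a fixed linear function of the descent distribution, and then evaluate the two strip generating functions by the generalization of Theorem~\ref{t.gf_cdes} with $n_1$ replaced by $k$ (your word-encoding computation is exactly the generalized Second Proof). The step you flag as delicate is handled identically in the paper, and your explicit justification of it via linearity is sound.
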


\begin{proof}
By \cite[Lemma 2.2(iii)]{ARR},
the cyclic descent set distribution is determined by the descent set distribution.
Hence, Equation~\eqref{eq:substract} also holds when $\Des$ is replaced by $\cDes$. The result now follows from the aforementioned generalization of Theorem~\ref{t.gf_cdes}.
\end{proof}

By a similar argument, Proposition~\ref{prop:cDes-2-rows} implies
the following more refined result.

\begin{theorem}\label{t.gf_cdes_2}
For any $2 \le k \le n/2$ and a subset $\varnothing \subsetneq J \subsetneq [n]$ of size $t$, define the cyclic differences $d_1, \ldots, d_t$ of $J$ as in Proposition~\ref{prop:cDes-2-rows}. Then
\[
\card{\{T \in \SYT((n-k,k)) \,:\, \cDes(T) = J\}}
= \coeff_{q^{k}} \left( \prod_{i=1}^{t} \sum_{j=1}^{d_i-1} q^j \right) -
\coeff_{q^{k-1}} \left( \prod_{i=1}^{t} \sum_{j=1}^{d_i-1} q^j \right).
\]
\end{theorem}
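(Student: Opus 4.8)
The plan is to mirror the proof of Theorem~\ref{t.gf_cdes_1}, but to track the entire cyclic descent set $J$ rather than only its cardinality. Fix $n$, fix a subset $\varnothing\subsetneq J\subsetneq[n]$ with $|J|=t$, and let $d_1,\dots,d_t$ be its cyclic differences; these depend only on $J$ and $n$, hence are common to the two horizontal strips $(k)\oplus(n-k)$ and $(k-1)\oplus(n-k+1)$ that appear below.

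First I would establish the set-refined analogue of~\eqref{eq:substract} at the level of cyclic descents, namely
\[
\card{\{T\in\SYT((n-k,k)):\cDes(T)=J\}}=\card{\{T\in\SYT((k)\oplus(n-k)):\cDes(T)=J\}}-\card{\{T\in\SYT((k-1)\oplus(n-k+1)):\cDes(T)=J\}}.
\]
Exactly as in Proposition~\ref{t.gf_des_1}, Lemma~\ref{lem:jdtDes} and Lemma~\ref{lem:shapes} give a $\Des$-preserving bijection from $\SYT((k)\oplus(n-k))$ onto $\bigsqcup_{d=0}^{k}\SYT((n-d,d))$; subtracting the corresponding statement for $k-1$ shows that the descent fiber size $\card{\{T\in\SYT((n-k,k)):\Des(T)=I\}}$ equals the difference of the descent fiber sizes of the two strips, for every $I\subseteq[n-1]$. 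To pass from $\Des$ to $\cDes$ I would invoke \cite[Lemma 2.2(iii)]{ARR}, which expresses each cyclic descent fiber size $\card{\cDes^{-1}(J)}$ as a fixed linear combination, with coefficients depending only on $J$ and $n$, of the descent fiber sizes $\card{\Des^{-1}(I)}$ with $I\subseteq[n-1]$. Applying this one linear functional to the difference of descent fiber sizes just described yields the asserted difference of cyclic descent fiber sizes.

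It then remains to substitute the values from Proposition~\ref{prop:cDes-2-rows}. Since $d_1,\dots,d_t$ are common to both strips, that proposition gives
\[
\card{\{T\in\SYT((k)\oplus(n-k)):\cDes(T)=J\}}=\coeff_{q^{k}}\Bigl(\prod_{i=1}^{t}\sum_{j=1}^{d_i-1}q^j\Bigr),
\]
and likewise with $k$ replaced by $k-1$ on both sides; plugging these into the cyclic subtraction identity gives precisely the claimed formula. (Here $2\le k\le n/2$ guarantees $k-1\ge1$, so both strips fall within the range of Proposition~\ref{prop:cDes-2-rows}.)

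The main obstacle is the promotion of the set-level subtraction from $\Des$ to $\cDes$. The subtlety is that the intermediate shapes $(n-d,d)$ with $d\le1$ are hooks, which admit no cyclic descent map, so the decomposition into fibers cannot be read off shape-by-shape at the cyclic level. The way around this is precisely the linearity of the ARR fiber-size formula in the descent data: the cancellation is carried out on the descent fiber sizes, where every shape is legitimate, and only afterwards is the (linear) cyclic-descent functional applied, thereby bypassing the offending hooks.
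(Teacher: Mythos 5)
Your proposal is correct and follows essentially the same route as the paper, which proves this theorem in one line by observing that the $\cDes$-level analogue of Equation~\eqref{eq:substract} (justified via \cite[Lemma 2.2]{ARR}, exactly as in the proof of Theorem~\ref{t.gf_cdes_1}) combines with the fiber sizes from Proposition~\ref{prop:cDes-2-rows} to give the formula. Your explicit remark that the hook shapes $(n)$ and $(n-1,1)$ lack cyclic descent maps, and that the argument sidesteps them by performing the cancellation on descent fibers before applying the linear functional from \cite{ARR}, correctly fills in a detail the paper leaves implicit.
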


\section{Two-row skew shapes}\label{sec:skew}

In this section we give two different definitions of cyclic descent sets on skew
SYT with two rows. The first one applies the {\it jeu de taquin} construction from Section~\ref{sec:jdt} to reduce 
to straight tableaux, while the second applies the lattice path interpretation from Section~\ref{sec:2rows}.

Parameterizing two-row skew shapes as $(n-k+m,k)/(m)$, where $0\le m\le k<n$ and $2k\le n+m$,
we will consider those with $k\ge m+2$, which we call {\em non-ribbon two-row skew shapes}. For $k=m+1$, the shape $(n-k+m,k)/(m)$ is a ribbon and thus, by Proposition~\ref{prop:ribbons}, has no cyclic descent map.
For $k=m$, the shape $(n-k+m,k)/(m)$ is a strip, and this case was considered in Section~\ref{sec:strips}.

\subsection{Cyclic descents on two-row skew shapes via jeu de taquin}

\begin{definition}\label{def:cDesJ}
For a SYT $T$ of non-ribbon two-row skew shape, let
$$\cDes_J(T):=\cDes(\Jdt(T)),$$
where the right-hand side is given by
Definition~\ref{def:cDes2rowleft}.
\end{definition}

\begin{remark}
The above definition does not depend on $m$, that is, any $T\in\SYT((n-k+m,k)/(m))$ and $T'\in\SYT((n-k+m',k)/(m'))$ of non-ribbon two-row skew shape having the same entries in the first and second row satisfy $\cDes_J(T')=\cDes_J(T)$.
\end{remark}

\begin{theorem}\label{skew-2-row-thm_J} 
For every $2\le m+2\le k<n$ with $2k\le n+m$, Definition~\ref{def:cDesJ} gives a cyclic descent map for $(n-k+m,k)/(m)$.
\end{theorem}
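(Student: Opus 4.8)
The plan is to reduce the skew statement entirely to the straight two-row case through $\Jdt$, exploiting that $\Jdt$ is a descent-preserving bijection from $\SYT((n-k+m,k)/(m))$ onto a disjoint union of sets of straight two-row tableaux. Before doing anything else I would verify that the hypotheses line up so that both Lemma~\ref{lem:shapes} and Theorem~\ref{thm:2rows_intro} are applicable. Since $m+2\le k$ we have $k-m\ge 2$, and the assumption $2k\le n+m$ is exactly the inequality $k-m\le n-k$, so the index range in Lemma~\ref{lem:shapes} is nonempty. Moreover, for every $d$ with $k-m\le d\le\min\{k,n-k\}$ one has $d\ge k-m\ge 2$ and also $d\le n-k\le n-d$, whence $d\le n/2$; thus each $(n-d,d)$ is a genuine straight two-row partition with $2\le d\le n/2$, precisely the regime in which Definition~\ref{def:cDes2rowleft} (equivalently Theorem~\ref{thm:2rows_intro}) supplies a cyclic descent map.

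Property (i) of Definition~\ref{def:extension} is then immediate. For any $T$ of non-ribbon two-row skew shape,
\[
\cDes_J(T)\cap[n-1]=\cDes(\Jdt(T))\cap[n-1]=\Des(\Jdt(T))=\Des(T),
\]
where the middle equality is property (i) of the cyclic descent map on the straight shape $\shape(\Jdt(T))$, and the final equality is Lemma~\ref{lem:jdtDes}.

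The heart of the argument is property (ii), and the key point is that $\Jdt$ is a \emph{bijection} onto the disjoint union. By Lemma~\ref{lem:shapes}, the image of $\Jdt$ is $\bigcup_{d=k-m}^{\min\{k,n-k\}}\SYT(n-d,d)$, and by Theorem~\ref{LR_jdt} (as recorded in the proof of Lemma~\ref{lem:shapes}) each $P$ in this union has exactly one $\Jdt$-preimage, because the relevant coefficient $c^\lambda_{\mu\nu}$ equals $1$. Consequently,
\[
\multiset{\cDes_J(T):T\in\SYT((n-k+m,k)/(m))}
=\biguplus_{d=k-m}^{\min\{k,n-k\}}\multiset{\cDes(P):P\in\SYT(n-d,d)}.
\]
By Theorem~\ref{thm:2rows_intro}, each summand on the right is a multiset closed under cyclic rotation modulo $n$, and a multiset union of rotation-closed multisets is again rotation-closed; this yields (ii) and completes the proof.

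The only delicate part — the closest thing to an obstacle — is the bookkeeping in the first paragraph: one must confirm that every shape $(n-d,d)$ arising as a $\Jdt$-image lies in the range $2\le d\le n/2$ where the straight-shape construction is valid, and that it occurs with multiplicity exactly one. Once these two facts are secured, the remainder is a clean assembly of rotation-closed multisets and requires no further computation.
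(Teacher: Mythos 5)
Your proposal is correct and follows essentially the same route as the paper's own proof: part (i) via Lemma~\ref{lem:jdtDes}, and part (ii) via the $\Jdt$ bijection of Lemma~\ref{lem:shapes} onto a disjoint union of straight two-row tableau sets, each rotation-closed by Theorem~\ref{thm:2rows_intro}. Your explicit check that every $d$ in the range satisfies $2\le d\le n/2$ (so the straight-shape construction applies) is a worthwhile detail the paper leaves implicit.
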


\begin{proof}
Recall that, by Theorem~\ref{thm:2rows_intro}, Definition~\ref{def:cDes2rowleft} gives a cyclic descent map for two-row straight shapes. Let $T\in\SYT((n-k+m,k)/(m))$. By 
Definition~\ref{def:cDesJ} and Lemma~\ref{lem:jdtDes},
$$
\cDes_J(T)\cap[n-1]=\cDes(\Jdt(T))\cap[n-1]=\Des(\Jdt(T))=\Des(T),
$$
and thus $\cDes_J$ satisfies part (i) of Definition~\ref{def:extension}.

Next we show that part (ii) is also satisfied. By Lemma~\ref{lem:shapes},
\[
\{\Jdt(T):\ T\in\SYT((n-k+m,k)/(m))\}= \bigcup_{d=k-m}^{\min\{k,n-k\}} \SYT(n-d,d).
\]
Hence, since $k-m\ge2$,
\[
\multiset{\cDes_J(T):\ T\in\SYT((n-k+m,k)/(m))}=
\bigsqcup_{d=k-m}^{\min\{k,n-k\}}\multiset{\cDes(T):\ T\in 
\SYT(n-d,d)},
\]
with $\cDes$ on the right hand side given by Definition~\ref{def:cDes2rowleft}.
By Theorem~\ref{thm:2rows_intro}, each multiset in this union is closed under cyclic rotation modulo $n$. 
\end{proof}

\subsection{Cyclic descents on two-row skew shapes via lattice paths}

A second definition of cyclic descents on two-row skew shapes is obtained by
generalizing the construction in Section~\ref{sec:2rows}, considering paths in $\AAA_n$ that do not go below a given horizontal line, and extending the injections $\varphi$ and $\psi$ to more general sets of paths.

Let $\PPP_{n,m}$ be the set of paths in $\AAA_n$ that do not go below the line $y=-m$, and let $\PPP_{n,m}^h$ be the subset of those that end at height $h$. Let $\QQQ_{n,m}=\AAA_n\setminus\PPP_{n,m}$, and define $\QQQ_{n,m}^h$ similarly. Note that, by definition, $\PPP^h_{n,0}=\PPP^h_n$ and $\QQQ^h_{n,0}=\QQQ^h_n$.

Fix $0\le m\le k<n$ with $2k\le n+m$. The map $\Gamma$ defined in Section~\ref{sec:paths} gives a bijection between $\SYT((n-k+m,k)/(m))$ and $\PPP_{n,m}^{n-2k}$, mapping the statistic $\Des$ on tableaux to the statistic $\Peak$ on paths.
Again, we restrict to non-ribbon two-row skew shapes by further assuming in the rest of this section that $k\ge m+2$.

As before, finding a cyclic descent map for $(n-k+m,k)/(m)$ as in Definition~\ref{def:extension} is equivalent to finding a cyclic peak map $\cPeakm$ on $\PPP_{n,m}^{n-2k}$ as in Definition~\ref{def:cPeak}. 
Indeed, once $\cPeakm$ has been defined, letting $\cDes_P(T):=\cPeakm(\Gamma(T))$ for $T\in\SYT((n-k+m,k)/(m))$ will yield a cyclic descent map for $(n-k+m,k)/(m)$.

The following lemma generalizes Lemma~\ref{lem:varphi}. Note that the condition $k\neq m$ guarantees that the set $\AAA_n^{n-2k+2m+2}$ is not empty.

\begin{lemma}\label{lem:varphi_m} There is a $\Peak$-preserving bijection
$$\varphi_m:\AAA_n^{n-2k+2m+2}\longrightarrow \QQQ_{n,m}^{n-2k}\subset \AAA_n^{n-2k}.$$
\end{lemma}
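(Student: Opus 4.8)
The plan is to generalize the construction of $\varphi$ from Lemma~\ref{lem:varphi}, which is exactly the case $m=0$. As there, I would first match the $U$ and $D$ steps of $P\in\AAA_n^{n-2k+2m+2}$ that face each other (proper parenthesis matching), so that the unmatched steps, read from left to right, form a block $D^iU^j$. Since changing a single $U$ into a $D$ lowers the terminal height by $2$, and I must pass from terminal height $n-2k+2m+2$ to $n-2k$, the map $\varphi_m$ should change the leftmost $m+1$ unmatched $U$ steps into $D$ steps. This is well defined because there are enough unmatched $U$'s: from $2k\le n+m$ one gets $j\ge j-i=n-2k+2m+2\ge m+2>m+1$.

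The key structural facts I would record about the matching are: (a) all unmatched $D$'s precede all unmatched $U$'s; (b) if $p_1<\dots<p_j$ are the positions of the unmatched $U$'s, then the $r$-th one starts at height $-i+r-1$, the global minimum being $-i$; and (c) immediately after $p_r$ the path never drops below height $-i+r$ (otherwise the descending step returning to $-i+r-1$ would face and match $p_r$). Using (a)--(c) I would trace the modified path $\varphi_m(P)$: each change of $p_r$ lowers all subsequent heights by $2$, so on the final block $[p_{m+1},n]$ the path runs $2(m+1)$ below its original height; as the original height there is $\ge -i+(m+1)$ by (c), with equality just after $p_{m+1}$, the modified path attains $-(i+m+1)\le -(m+1)<-m$. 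Hence $\varphi_m(P)\in\QQQ_{n,m}^{n-2k}$, and its terminal height is $n-2k$ as required.

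For the inverse I would match again and change the rightmost $m+1$ unmatched $D$'s back into $U$'s; this is well defined because $\varphi_m(P)$ dips below $-m$, so it has at least $m+1$ unmatched $D$'s. The point is that the $m+1$ steps I altered become precisely the $(i+1),\dots,(i+m+1)$-th unmatched $D$'s of $\varphi_m(P)$ --- i.e.\ its rightmost unmatched $D$'s --- since the original $i$ unmatched $D$'s keep their positions and the new global minimum $-(i+m+1)$ is attained right after $p_{m+1}$. This is the analogue of the inverse in Lemma~\ref{lem:varphi} and recovers $P$, establishing bijectivity.

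The main obstacle, and the only place requiring more care than the $m=0$ case, is the claim that $\varphi_m$ preserves $\Peak$, now that several (possibly adjacent, possibly separated by matched excursions) steps are altered at once. A peak is a $U$ immediately followed by a $D$. No peak is destroyed: by the matching, no unmatched $U$ is immediately followed by a $D$ (such a pair would face each other and be matched), so none of the $p_r$ is the left step of a peak. No peak is created either: a new peak could only appear at a position $p_r-1$, and I would check that the step at $p_r-1$ is either a changed $U$ (when $p_{r-1}=p_r-1$, giving $DD$) or a $D$ --- for $r=1$ because $h_{p_1-1}=-i$ is the global minimum, and for $r\ge 2$ because by (c) the path cannot reach $-i+r-1$ from below just before $p_r$. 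Once $\varphi_m$ is shown to be a peak-preserving bijection, its inverse is automatically peak-preserving, completing the proof.
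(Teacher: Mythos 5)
Your proposal is correct and follows essentially the same route as the paper: match facing $U$ and $D$ steps, turn the leftmost $m+1$ unmatched $U$'s into $D$'s, and invert by turning the rightmost $m+1$ unmatched $D$'s back into $U$'s. The paper's own proof is terser (it leaves the peak-preservation and the identification of the altered steps with the rightmost unmatched $D$'s of the image implicit, citing the $m=0$ case), whereas you verify these details explicitly; your verification is sound.
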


\begin{proof} Given $P\in\AAA_n^{n-2k+2}$, match $U$ and
$D$ steps that face each other, as in the proof of Lemma~\ref{lem:varphi}. Then take the leftmost $m+1$ unmatched $U$ steps (which must
exist because $n-2k+2m+2\ge m+1$) and turn them into $D$ steps. Let $\varphi_m(P)$
be the resulting path. 

The image of $\varphi_m$ consists of those paths in $\AAA_n^{n-2k}$ that have at least $m+1$
unmatched $D$ steps, that is, those in $\QQQ_{n,m}^{n-2k}$.
The preimage of a path in $\QQQ_{n,m}^{n-2k}$ is obtained by matching $U$
and $D$ steps that face each other, and then changing the rightmost $m+1$ unmatched $D$ steps into $U$ steps.
\end{proof}

The set of paths in $\AAA_n^{n-2k}$ that are not in the image of $\varphi_m$ is $\PPP_{n,m}^{n-2k}$. Since $\multiset{\cPeakA(P):P\in\AAA_n^h}$ is closed under cyclic rotation, 
requiring that 
\begin{equation}\label{eq:Peak'm}
|\{P\in\PPP_{n,m}^{n-2k}:\cPeakm(P)=J\}|=|\{P\in\AAA_n^{n-2k}:\cPeakA(P)=J\}|-|\{P\in\AAA_n^{n-2k+2m+2}:\cPeakA(P)=J\}|
\end{equation}
for every $J\subseteq[n]$ is sufficient for $\cPeakm$ on $\PPP_{n,m}^{n-2k}$ to satisfy condition (ii) in Definition~\ref{def:cPeak}.
Additionally, provided that $\cPeakm$ extends $\Peak$, Lemma~\ref{lem:varphi_m} implies that it is enough to require condition~\eqref{eq:Peak'm} for sets 
$J=I\sqcup\{n\}$ with $I\subseteq[n-1]$.

Fix $I\subseteq[n-1]$. Paths in $\AAA_n^{n-2k}$ with peak set $I$ can be partitioned as follows:
\begin{multline}\label{eq:unionm}
\{P\in\AAA_n^{n-2k}:\Peak(P)=I\}\ =\{P\in\PPP_{n,m}^{n-2k}:\Peak(P)=I\}\sqcup\{P\in\QQQ_{n,m}^{n-2k}:\Peak(P)=I\}\\
=\{P\in\PPP_{n,m}^{n-2k}:\cPeakA(P)=I\}\sqcup\{P\in\PPP_{n,m}^{n-2k}:\cPeakA(P)=I\sqcup\{n\}\}\\
\sqcup\{P\in\QQQ_{n,m}^{n-2k}:\cPeakA(P)=I\}\sqcup\{P\in\QQQ_{n,m}^{n-2k}:\cPeakA(P)=I\sqcup\{n\}\}.
\end{multline}

By Lemma~\ref{lem:varphi_m}, $\varphi_m$ gives a bijection between
$\{P\in\AAA_n^{n-2k+2m+2}:\Peak(P)=I\}$ and
$\{P\in\QQQ_{n,m}^{n-2k}:\Peak(P)=I\}$.
The next lemma, whose proof is almost identical to that of Lemma~\ref{lem:cPeak},
describes how the statistic $\cPeakA$ is affected by this bijection. The paths with $\cPeakA(\varphi_m(P))=\cPeakA(P)\sqcup\{n\}$ correspond to the striped region in Figure~\ref{fig:varphi_m_blocks}.

\begin{figure}[htb]
\centering
\begin{tikzpicture}[scale=1.1]
\draw (-.5,0) rectangle (6,2.5);
\fill[blue!30!white,postaction={pattern=north east lines}] (-.5,1) rectangle (6,1.5);
\fill[blue!30!white] (-.5,1.5) rectangle (6,2.5);
\draw (-.5,1) rectangle (6,2.5);
\filldraw[fill=yellow!40!white] (-.5,0) rectangle (6,1);
\draw (2.75,1.75) node {$\{P\in\AAA_n^{n-2k+2m+2}:\cPeakA(P)=I\}$};
\draw (2.75,.5) node {$\{P\in\AAA_n^{n-2k+2m+2}:\cPeakA(P)=I\sqcup\{n\}\}$};
\draw [decorate,decoration={brace,mirror,amplitude=5pt},xshift=4pt] (6,0) -- (6,2.5);
\draw[->] (6.5,1.25)-- node[above]{$\varphi_m$} node[below]{$\sim$} (7.5,1.25);
\draw (3,3.25) node {$\{P\in\AAA_n^{n-2k+2m+2}:\Peak(P)=I\}$};

\begin{scope}[shift={(8,0)}]
\draw [decorate,decoration={brace,amplitude=5pt},xshift=-4pt] (0,0) -- (0,2.5);
\draw (0,0) rectangle (6,4.5);
\filldraw[fill=red!30!white] (0,3.5) rectangle (6,4.5);
\filldraw[fill=orange!40!white] (0,2.5) rectangle (6,3.5);
\filldraw[fill=blue!30!white] (0,1.5) rectangle (6,2.5);
\fill[yellow!40!white,postaction={pattern=north east lines}] (0,1) rectangle (6,1.5);
\fill[yellow!40!white] (0,0) rectangle (6,1);
\draw (0,0) rectangle (6,1.5);
\draw (3,4) node {$\{P\in\PPP_{n,m}^{n-2k}:\cPeakA(P)=I\}$};
\draw (3,3) node {$\{P\in\PPP_{n,m}^{n-2k}:\cPeakA(P)=I\sqcup\{n\}\}$};
\draw (3,2) node {$\{P\in\QQQ_{n,m}^{n-2k}:\cPeakA(P)=I\}$};
\draw (3,.75) node {$\{P\in\QQQ_{n,m}^{n-2k}:\cPeakA(P)=I\sqcup\{n\}\}$};
\draw (3,5.25) node {$\{P\in\AAA_n^{n-2k}:\Peak(P)=I\}$};
\end{scope}
\end{tikzpicture}
\caption{The behavior of $\varphi_m$ with respect to $\cPeakA$.}
\label{fig:varphi_m_blocks}
\end{figure}

\begin{lemma}\label{lem:cPeakm}
For $P\in \AAA_n^{n-2k+2m+2}$, 
$$\cPeakA(\varphi_m(P))=\begin{cases} \cPeakA(P)\sqcup\{n\} & \text{if $P$ is of the form $P=URU$ for some $R\in\PPP_{n-2}^{n-2k+2m}$},\\
\cPeakA(P) & \text{otherwise.} \end{cases}$$
\end{lemma}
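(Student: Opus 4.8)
The plan is to mirror closely the proof of Lemma~\ref{lem:cPeak}, tracking only the first and last steps of the path. The starting observation is that, by Lemma~\ref{lem:varphi_m}, the map $\varphi_m$ is $\Peak$-preserving, so $\Peak(\varphi_m(P))=\Peak(P)$. Since $\cPeakA$ agrees with $\Peak$ on $[n-1]$ and differs from it only possibly in the element $n$, the sets $\cPeakA(P)$ and $\cPeakA(\varphi_m(P))$ can disagree at most in whether they contain $n$. Recalling that, by the definition of $\cPeakA$ on $\AAA_n^h$, the element $n$ belongs to the cyclic peak set precisely when the path ends with a $U$ step and begins with a $D$ step, the entire argument reduces to understanding how $\varphi_m$ affects the first and last steps of $P$.

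First I would show that $\varphi_m$ never alters the last step. By construction, $\varphi_m$ turns the leftmost $m+1$ unmatched $U$ steps into $D$ steps. If the last step of $P$ is a $U$, then, being terminal, it cannot be matched, so it is the \emph{rightmost} unmatched $U$. The number of unmatched $U$ steps is at least the ending height $n-2k+2m+2$; since $2k\le n+m$ yields $n-2k+2m+2\ge m+2>m+1$, the rightmost unmatched $U$ is not among the leftmost $m+1$ that get changed. Hence the last step is untouched. This bookkeeping with $m+1$ unmatched steps, rather than the single step in Lemma~\ref{lem:cPeak}, is the one place where the parameter $m$ genuinely enters, and it is the main (though mild) obstacle; everything else is a verbatim adaptation.

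With the last step fixed, $\cPeakA(\varphi_m(P))$ can differ from $\cPeakA(P)$ only if the first step is changed (necessarily from $U$ to $D$, since $\varphi_m$ only turns $U$ steps into $D$ steps) \emph{and} the last step is a $U$. The first step is changed exactly when it is an unmatched $U$; being the very first step, this occurs if and only if the path never returns to height $0$ after it, i.e. $P=UR\cdots$ with the interior staying weakly above height $1$. Combining this with the requirement that the last step be a $U$ forces $P=URU$, where $R$ starts and remains at height $\ge 1$ and ends at height $n-2k+2m+1$; normalizing $R$ to start at the origin, this says precisely that $R\in\PPP_{n-2}^{n-2k+2m}$.

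Finally, in this case $\varphi_m(P)$ starts with $D$ and ends with $U$, so $n\in\cPeakA(\varphi_m(P))$, whereas $P$ starts with $U$ and hence $n\notin\cPeakA(P)$; thus $\cPeakA(\varphi_m(P))=\cPeakA(P)\sqcup\{n\}$. In every other case the first step is unchanged, so (the last step being preserved as well) the $n$-membership is preserved and $\cPeakA(\varphi_m(P))=\cPeakA(P)$. This establishes the claimed dichotomy.
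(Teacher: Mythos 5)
Your proof is correct and follows exactly the route the paper intends: the paper omits the argument, stating only that it is ``almost identical'' to the proof of Lemma~\ref{lem:cPeak}, and your write-up is precisely that adaptation. You also correctly supply the one genuinely new detail, namely that the number $j$ of unmatched $U$ steps satisfies $j\ge n-2k+2m+2\ge m+2>m+1$, so the rightmost unmatched $U$ (and hence the last step) survives the modification.
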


Using the decomposition~\eqref{eq:unionm} and Lemma~\ref{lem:cPeakm}, Equation~\eqref{eq:Peak'm} for $J=I\sqcup\{n\}$ can be written as
\begin{multline}\label{eq:sizem}
|\{P\in\PPP_{n,m}^{n-2k}:\cPeakm(P)=I\sqcup\{n\}\}|\\
=|\{P\in\AAA_n^{n-2k}:\cPeakA(P)=I\sqcup\{n\}\}|-|\{P\in\AAA_n^{n-2k+2m+2}:\cPeakA(P)=I\sqcup\{n\}\}|\\
=|\{P\in\PPP_{n,m}^{n-2k}:\cPeakA(P)=I\sqcup\{n\}\}|\,+\,|\{P: P=URU \text{ for some } R\in\PPP_{n-2}^{n-2k+2m} \text{ and } \Peak(P)=I\}|.
\end{multline}

For paths $P\in\PPP_{n,m}^{n-2k}$ with $\cPeakA(P)=I\sqcup\{n\}$, we simply define $\cPeakm(P)=I\sqcup\{n\}$. For Equation~\eqref{eq:sizem} to be satisfied, we will find a subset of $\{P\in\PPP_{n,m}^{n-2k}:\cPeakA(P)=I\}$ of size equal to the last summand of the right-hand side of \eqref{eq:sizem} (corresponding to the striped region in Figure~\ref{fig:varphi_m_blocks}); and then let $\cPeakm(P):=I\cup\{n\}$ for $P$ in the subset, and $\cPeakm(P):=I$ for $P$ outside of the subset (but with $\cPeakA(P)=I$).
The subset will be the image of an injection 
$$\psi_m:\{P: P=URU \text{ for some } R\in\PPP_{n-2}^{n-2k+2m} \text{ and } \Peak(P)=I\}\longrightarrow \{P\in\PPP_{n,m}^{n-2k}:\cPeakA(P)=I\},$$
that we define next.

Given $P$ in the domain, we can uniquely write $P=UQDU^r$ for some path $Q$ and $r\ge1$. Indeed, $R$ must contain a $D$, since $k\ge m+2$ implies that $n-2k+2m<n-2$. Consider two cases:
\begin{enumerate}[(a)]
\item If $r\ge m+1$, let $\psi_m(P)=UQD^{m+2}U^{r-m-1}$. 
\item If $r<m+1$, let $\psi_m(P)$ be the path obtained from $UQD$ by matching $U$ and $D$ steps as usual and changing the leftmost $m+1-r$ unmatched $U$ steps into $D$ steps, then appending $D^r$. 
\end{enumerate}
Figure~\ref{fig:psi} shows an example of this construction in each case. 

\begin{figure}[htb]
\centering
\begin{tikzpicture}[thick, scale=0.35]
    \def\U{(1,1)}     \def\D{(1,-1)}
    \draw (0,0) coordinate(d0)
      -- ++\U coordinate(d1)
      -- ++\U coordinate(d2)
      -- ++\U coordinate(d3)
      -- ++\D coordinate(d4)
      -- ++\D coordinate(d5) 
      -- ++\U coordinate(d6)
      -- ++\D  coordinate(d7)
      -- ++\U coordinate(d8)
      -- ++\U coordinate(d9)
      -- ++\U coordinate(d10);
     \draw[densely dotted] (0,0)--(10,0);
      \draw[blue,line width=2.0pt] (d7)--(d10);
      \foreach \x in {0,...,10} {
        \draw (d\x) circle (1.5pt);
      }
      \draw (12,1) node [label=$\psi_2$] {$\mapsto$};
      \begin{scope}[shift={(14,0)}]
    \draw (0,0) coordinate(d0)
      -- ++\U coordinate(d1)
      -- ++\U coordinate(d2)
      -- ++\U coordinate(d3)
      -- ++\D coordinate(d4)
      -- ++\D coordinate(d5) 
      -- ++\U coordinate(d6)
      -- ++\D  coordinate(d7)
      -- ++\D coordinate(d8)
      -- ++\D coordinate(d9)
      -- ++\D coordinate(d10);
     \draw[densely dotted] (0,0)--(10,0);
      \draw[blue,line width=2.0pt] (d7)--(d10);
      \foreach \x in {0,...,10} {
        \draw (d\x) circle (1.5pt);
      }
      \end{scope}
    \end{tikzpicture}  \medskip

  \begin{tikzpicture}[thick, scale=0.35]
    \def\U{(1,1)}     \def\D{(1,-1)}
    \draw (0,0) coordinate(d0)
      -- ++\U coordinate(d1)
      -- ++\U coordinate(d2)
      -- ++\U coordinate(d3)
      -- ++\U coordinate(d4)
      -- ++\D coordinate(d5) 
      -- ++\U coordinate(d6)
      -- ++\D  coordinate(d7)
      -- ++\U coordinate(d8)
      -- ++\D coordinate(d9)
      -- ++\U coordinate(d10);
     \draw[densely dotted] (0,0)--(10,0);
      \draw[dotted,orange] (3.5,3.5)--(4.5,3.5);
      \draw[dotted,orange] (5.5,3.5)--(6.5,3.5);
      \draw[dotted,orange] (7.5,3.5)--(8.5,3.5);
      \draw[blue,line width=2.0pt] (d0)--(d2);
      \draw[blue,line width=2.0pt] (d9)--(d10);
      \foreach \x in {0,...,10} {
        \draw (d\x) circle (1.5pt);
      }
      \draw (12,1) node [label=$\psi_2$] {$\mapsto$};
      \begin{scope}[shift={(14,0)}]
    \draw (0,0) coordinate(d0)
      -- ++\D coordinate(d1)
      -- ++\D coordinate(d2)
      -- ++\U coordinate(d3)
      -- ++\U coordinate(d4)
      -- ++\D coordinate(d5) 
      -- ++\U coordinate(d6)
      -- ++\D  coordinate(d7)
      -- ++\U coordinate(d8)
      -- ++\D coordinate(d9)
      -- ++\D coordinate(d10);
     \draw[densely dotted] (0,0)--(10,0);
      \draw[dotted,orange] (3.5,-.5)--(4.5,-.5);
      \draw[dotted,orange] (5.5,-.5)--(6.5,-.5);
      \draw[dotted,orange] (7.5,-.5)--(8.5,-.5);
      \draw[blue,line width=2.0pt] (d0)--(d2);
      \draw[blue,line width=2.0pt] (d9)--(d10);
      \foreach \x in {0,...,10} {
        \draw (d\x) circle (1.5pt);
      }
      \end{scope}
    \end{tikzpicture}
\caption{Two examples of the injection $\psi_m$ with $m=2$, $n=10$ and $k=6$.}
\label{fig:psi2}
\end{figure}

It is clear from the construction that $\psi_m$ preserves the statistic $\Peak$, and that it never creates a cyclic peak at $n$, since $\psi_m(P)$ always either starts with a $U$ (in case (a)) or ends with a $D$ (in case (b)). Also, since $\psi_m$ changes $m+1$ $U$ steps into $D$ steps, the ending height of $\psi_m(P)$ is $n-2k$, and this path never goes below $y=-m$, so $\psi_m(P)\in\PPP_{n,m}^{n-2k}$.

Finally, to check that $\psi_m$ is injective, we describe how to recover $P$ from $P'=\psi_m(P)$. If $P'$ starts with a $U$, then it was obtained from case (a), and so it is enough to change the last $m+1$ $D$ steps of the path into $U$ steps, recovering $P$. If $P'$ starts with a $D$, then it was obtained from case (b). In this case we can find $r$ because the lowest point of $P'$ not including the ending run of $D$s is at height $-(m+1-r)$. Then, $P$ is recovered by first removing the $D^r$ at the end of $P'$, then matching $U$ with $D$ steps as usual, and finally changing the $m+1-r$ unmatched $D$ steps into $U$ steps.

To summarize, we have constructed a cyclic peak map $\cPeakm$ on $\PPP_{n,m}^{n-2k}$ satisfying Definition~\ref{def:cPeak} by letting $n\in\cPeakm(P)$ if and only if either $n\in\cPeakA(P)$, or $P$ is in the image of $\psi_m$ for some $I$.
This image consists of paths in $\PPP_{n,m}^{n-2k}$ that either \begin{enumerate}[(A)]
\item start with a $U$, end with $D^{m+2}U^j$ for some $j\ge0$, and after changing this ending to $DU^{j+m+1}$ they are of the form $URU$ for some $R\in\PPP_{n-2}$; or
\item start with a $D$ and end with $D^{\ell+m+2}$, where $\ell$ is the height of their lowest point not including the ending run of $D$s (note that ending with $D^{\ell+m+2}$ is equivalent to going below $y=-(m-b)$ where $b+1$ is the number of consecutive $D$s at the end of the path).
\end{enumerate}

\medskip

Via the bijection $\Gamma$, the statistic $\cPeakm$ translates into a statistic $\cDes_P$ on $\SYT((n-k+m,k)/(m))$ that we define next.
For $T\in\SYT((n-k+m,k)/(m))$, let $b=b(T)$ be the
number of consecutive entries at the end of the second row of $T$
minus one. In other words, $b\ge 0$ is the largest number so that
the second row of $T$ ends in $i-b,\dots,i-1,i$, for some $i$.

\begin{definition}\label{def:cDesP}
For $T\in\SYT((n-k+m,k)/(m))$ with $2\le m+2\le k<n$ and $2k\le n+m$, let
$n\in\cDes_P(T)$ if and only if at least one of the following
conditions holds:
\begin{enumerate}[(1)]
\item $1$ is lower than $n$ in $T$; \item $1$ is in the first row,
$b>m$, and for every $1\le i\le k-b-1$, we have $T_{2,i} >
T_{1,i+m+1}$; \item $1$ is in the second row and $b>m$; \item $1$
is in the second row, $b\le m$, and there exists some $m-b+1 \le
i\le k-b-1$ such that $T_{2,i} < T_{1,i+b}$.
\end{enumerate}
\end{definition}

\begin{example} According to Definition~\ref{def:cDesP}, we have $10\in\cDes\left(\young(::234567,189\ten)\right)$ because condition (4) is satisfied, but $10\notin\cDes\left(\young(:134567,289\ten)\right)$.
\end{example}

\begin{theorem}\label{skew-2-row-thm_S} 
For every $2\le m+2\le k<n$ and $2k\le n+m$, Definition~\ref{def:cDesP} gives a cyclic descent map for $(n-k+m,k)/(m)$.
\end{theorem}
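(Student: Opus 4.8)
The plan is to imitate the proof of Theorem~\ref{thm:2rows_intro}, transporting everything to the lattice-path model through the bijection $\Gamma$. The key claim to establish is that Definition~\ref{def:cDesP} is merely the transport of the cyclic peak map $\cPeakm$ under $\Gamma$; that is, $\cDes_P(T) = \cPeakm(\Gamma(T))$ for every $T \in \SYT((n-k+m,k)/(m))$. Granting this identity, the theorem is immediate: $\Gamma$ is a bijection onto $\PPP_{n,m}^{n-2k}$ that sends $\Des$ to $\Peak$, and the construction preceding Definition~\ref{def:cDesP} shows that $\cPeakm$ extends $\Peak$ and has a rotation-closed image multiset, i.e.\ satisfies Definition~\ref{def:cPeak}. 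Hence $\cDes_P(T) \cap [n-1] = \cPeakm(\Gamma(T)) \cap [n-1] = \Peak(\Gamma(T)) = \Des(T)$, giving part (i) of Definition~\ref{def:extension}, while the equality of multisets $\multiset{\cDes_P(T)} = \multiset{\cPeakm(P)}$ gives part (ii).

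First I would record the dictionary between $T$ and $P = \Gamma(T)$: the $i$th step of $P$ is $U$ or $D$ according as $i$ lies in the first or second row of $T$, so $1$ is in row $1$ (resp.\ row $2$) exactly when $P$ begins with $U$ (resp.\ $D$), and $n$ is in row $1$ (resp.\ row $2$) exactly when $P$ ends with $U$ (resp.\ $D$); moreover $b = b(T)$ equals the length of the final maximal run of $D$ steps of $P$ minus one. Using the characterization of $n \in \cPeakm(P)$ from the construction --- namely $n \in \cPeakA(P)$, or $P$ is of type (A), or $P$ is of type (B) --- I would then match the four clauses of Definition~\ref{def:cDesP} one by one. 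Clause (1) ($1$ lower than $n$) says precisely that $P$ starts with $D$ and ends with $U$, i.e.\ $n \in \cPeakA(P)$. Clause (3) ($1$ in row $2$, $b>m$) corresponds to type (B) with a long terminal $D$-run: since $P$ starts with $D$ it reaches height $-1$, and $-1 \le b-m-1$ because $b>m$, so the defining requirement of type (B) (that $P$ go below $y=-(m-b)$) holds automatically and imposes no extra inequality. Clauses (2) and (4) correspond to type (A) and to type (B) with $b \le m$, respectively, and these carry the genuine inequalities.

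The main obstacle is to verify that the height conditions hidden inside types (A) and (B) translate exactly into the entry inequalities of clauses (2) and (4). Here I would use the height--entry correspondence: writing $T_{2,i}$ for the position of the $i$th $D$ step, there are $T_{2,i}-i$ many $U$ steps before it, so the height of $P$ immediately after the $i$th $D$ equals $T_{2,i}-2i$; and, because of the skew shift by $\mu=(m)$, the cell $T_{1,i+c}$ is the $(i+c-m)$th $U$ step. With this, the inequality $T_{2,i} > T_{1,i+m+1}$ becomes ``the height after the $i$th $D$ is $\ge 1$'', so that clause (2), ranging over $1 \le i \le k-b-1$, says exactly that the path obtained by collapsing the terminal block (the preimage $URU$ under $\psi_m$) stays weakly above height $1$ --- the defining positivity of type (A). Similarly, $T_{2,i} < T_{1,i+b}$ becomes ``the height after the $i$th $D$ is $\le b-m-1$'', so clause (4) says that $P$ dips to the level $b-m-1$ somewhere before its terminal $D$-run, which is exactly the type-(B) condition when $b \le m$ (the lower bound $i \ge m-b+1$ being just what is needed for the cell $T_{1,i+b}$ to exist). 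I expect the only delicate points to be the careful separation of the terminal consecutive block of length $b+1$ from the first $k-b-1$ second-row entries, and checking that collapsing the terminal block into a single $D$ step does not create a new low point violating positivity. Once these are handled, the identity $\cDes_P = \cPeakm \circ \Gamma$ follows, and the theorem is proved.
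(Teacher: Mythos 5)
Your proposal is correct and follows essentially the same route as the paper: reduce to the identity $\cDes_P(T)=\cPeakm(\Gamma(T))$, then match clause (1) to $n\in\cPeakA(P)$, clause (2) to paths of type (A), and clauses (3)--(4) to paths of type (B). Your explicit height--entry dictionary (height after the $i$th $D$ step equals $T_{2,i}-2i$, and $T_{1,m+c}$ is the position of the $c$th $U$ step) correctly supplies the case-by-case verification that the paper leaves implicit.
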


\begin{proof}
Since we have proved that $\cPeakm$ is a cyclic peak map on $\PPP_{n,m}^{n-2k}$, 
it is enough to show that, for every $T\in \SYT((n-k+m,k)/(m))$, we have $\cDes_P(T)=\cPeakm(P)$, where $P=\Gamma(T)$.

Recall that $n\in\cPeakm(P)$ precisely in the following two scenarios:
\begin{itemize} 
\item 
$n\in\cPeakA(P)$. This case corresponds to $n$ being in the first row of $T$ and $1$ being in the second row, which is condition (1).
\item 
$P$ is in the image of $\psi_m$, which means that cases (A) or (B) above apply.

In case (A), $1$ is in the first row of $T$, the last $m+2$ entries of the second row are consecutive (equivalently, $b>m$), and if we remove $1$ and the last $m+1$ entries (or equivalently, the last $b+1$ entries) of the second row and we left-justify the rows, the resulting tableau is standard. This is equivalent to condition (2).

In case (B), $1$ is in the second row of $T$, and the condition of the path going below $y=-(m-b)$ translates as follows: if we remove the last $b+1$ entries of the second row of $T$ and shift the first row $b$ positions to the left, we no longer have a standard tableau. This is equivalent to conditions (3) and (4). \qedhere
\end{itemize}
\end{proof}

\medskip

\begin{remark}\label{rem:skew}\
\begin{enumerate}
\item 
The above two definitions of cyclic descent set on skew
tableaux do not coincide in general. For example, for $$T=\young(:246,135),\qquad
\cDes_P(T)=\{2,4,6\}\ \ \text{and}\ \ \cDes_J(T)=\{2,4\}.$$ On the other
hand, for
\[
T'=\young(:124,356),\qquad \cDes_P(T')=\{2,4\}\ \ \text{and}\ \
\cDes_J(T')=\{2,4,6\}.
\]

\item 
Examples of special interest are the disconnected two-row shapes, that is, the horizontal strips $(n-k+m,k)/(m)$ with $m\ge k$.
SYT of this shape may be identified with subsets of
$[n]$ of size $k$. Even though Definition~\ref{def:cDesP} requires $m+2\le k$, the definition of
$\cDes_P$ that would result by setting $m=k$ coincides with the definition of $\cDes$ on two-row horizontal strips 
(Definition~\ref{def-hor}). Indeed, for $m=k$, conditions (2), (3) and (4) in Definition~\ref{def:cDesP} never hold.
On the other hand, $\cDes_J$ from Definition~\ref{def:cDesJ} for $m=k$ does not coincide with $\cDes$ from Definition~\ref{def-hor}. 
For example, 
for $T=\young(::12,34)$, \ $\cDes_P(T)=\cDes(T)=\{2\}$ while $\cDes_J(T)=\{2,4\}$.

\item 
Both $\cDes_J$ and $\cDes_P$ coincide with $\cDes$ from Definition~\ref{def:cDes2rowleft} on
straight shapes, and in particular with Rhoades' definition~\cite{Rhoades} on two-row rectangular shapes.
\end{enumerate}

\end{remark}

It is shown in~\cite[Lemma 2.2]{ARR} that, under mild conditions, the distribution of the cyclic descent set 
over $\SYT(\lambda/\mu)$ is uniquely determined by $\lambda/\mu$. 
Since both maps $\cDes_P$ and $\cDes_J$ satisfy the conditions of~\cite[Lemma 2.2]{ARR}
we deduce the following.

\begin{proposition}\label{cor_equid}
The statistics $\cDes_P$ and $\cDes_J$ are equidistributed over
$\SYT((n-k+m,k)/(m))$.
\end{proposition}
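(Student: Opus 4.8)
The plan is to deduce the equidistribution from the uniqueness of the fiber sizes of a cyclic descent map, a fact already available to us. By definition, the statistics $\cDes_P$ and $\cDes_J$ are equidistributed over $\SYT((n-k+m,k)/(m))$ precisely when $\card{\cDes_P^{-1}(J)} = \card{\cDes_J^{-1}(J)}$ for every $J \subseteq [n]$, so it suffices to establish this equality of fiber cardinalities. First I would record that, by Theorems~\ref{skew-2-row-thm_J} and~\ref{skew-2-row-thm_S}, both $\cDes_P$ and $\cDes_J$ are cyclic descent maps for $\lambda/\mu := (n-k+m,k)/(m)$ in the sense of Definition~\ref{def:extension}. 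The whole argument then amounts to feeding these two maps into a single uniqueness statement.

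Next I would verify the hypotheses needed to invoke Theorem~\ref{conj1}. In the non-ribbon regime $2 \le m+2 \le k < n$ with $2k \le n+m$, the shape $\lambda/\mu$ is connected: since $2k \le n+m$ its two rows overlap in the $k-m \ge 2$ columns $m+1, \ldots, k$, which in particular produces a $2\times 2$ block, so $\lambda/\mu$ is not a connected ribbon and Theorem~\ref{conj1} applies. The one point requiring care is that Theorem~\ref{conj1} is phrased for cyclic extensions in the sense of Definition~\ref{def:cDes}, which imposes the non-Escher axiom and an explicit equivariant bijection, whereas $\cDes_P$ and $\cDes_J$ arise as cyclic descent maps in the sense of Definition~\ref{def:extension}. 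I would close this gap on our shape directly: the non-Escher axiom is automatic, since a column of length $2$ forces $\card{\Des(T)} \ge 1$ by Lemma~\ref{t.minmax_Des} (so $\cDes(T) \ne \varnothing$), while $\cDes(T) = [n]$ would require $\Des(T) = [n-1]$, which is impossible in a two-row tableau. Equivalently, one may cite the remark following Definition~\ref{def:cDes}, as a connected non-ribbon two-row shape is none of the exceptional shapes listed there. The passage from the multiset-closure condition of Definition~\ref{def:extension}(ii) to an equivariant bijection $\bij$ is the standard observation that a finite multiset invariant under the cyclic $\ZZ_n$-shift $J \mapsto 1+J$ on $2^{[n]}$ admits a compatible bijection of the underlying tableaux.

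With these checks in place, the conclusion is immediate: by the ``furthermore'' clause of Theorem~\ref{conj1} (equivalently, \cite[Lemma 2.2]{ARR}), all cyclic extensions of $\Des$ on $\SYT(\lambda/\mu)$ share the same cardinalities $\card{\cDes^{-1}(J)}$ for every $J \subseteq [n]$. Applying this to $\cDes_P$ and $\cDes_J$ yields $\card{\cDes_P^{-1}(J)} = \card{\cDes_J^{-1}(J)}$ for all $J$, which is exactly the asserted equidistribution. I expect the only genuine obstacle to be the bookkeeping reconciliation of Definitions~\ref{def:extension} and~\ref{def:cDes} described above; the equidistribution itself carries no further combinatorial content beyond the uniqueness of fiber sizes, and in particular requires no explicit bijection between $\SYT$ of the shape realizing $\cDes_P = \cDes_J$ fiberwise.
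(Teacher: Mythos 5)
Your proposal is correct and follows essentially the same route as the paper, which simply observes that both $\cDes_P$ and $\cDes_J$ satisfy the hypotheses of \cite[Lemma 2.2]{ARR} (the uniqueness-of-fiber-sizes statement underlying the ``furthermore'' clause of Theorem~\ref{conj1}) and deduces equidistribution at once. Your extra verifications of the non-ribbon, non-Escher, and equivariant-bijection hypotheses are accurate but are exactly the ``mild conditions'' the paper leaves implicit.
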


\section{Final remarks and open problems}\label{sec:final}

After the results in this paper, explicit cyclic descent maps are now known for rectangles \cite{Rhoades}, straight shapes with a disconnected box added in the upper-right corner \cite{ER16}, strips, hooks plus one cell, two-row straight shapes and two-row skew shapes. These results are summarized in Table~\ref{tab:cDes}. 

\begin{table}[htb]
\centering
\begin{tabular}{c|c|c|c}
Shape & Example & $n\in\cDes(T)$ if and only if & Result in the paper \\
\hline\hline
\begin{tabular}{c}
rectangle \\ $(m^{n/m})$ 
\end{tabular} & $\young(\blank\blank\blank,\blank\blank\blank)$ & $n-1\in \Des (\jdt(-1+T))$ & Theorem~\ref{Rhoades_thm2} \\[3mm]
\hline
$\lambda^\Box$ & $\young(:::\blank,\blank\blank\blank,\blank\blank)$ & 
\begin{tabular}{c}
 $n$ is strictly north of $1$, or \\
$n-d\in \Des (\jdt(-d+T))$, \\
where $d$ is the letter\\ in the disconnected cell
\end{tabular} &
Theorem~\ref{thm:cDes_lambda_box} \\[4mm]
\hline
\begin{tabular}{c} hook plus an \\ internal cell \\ $(n-k,2,1^{k-2})$ \\ $2\le k\le n-2$  \end{tabular} & $\young(\hfill\hfill\hfill\hfill,\hfill\hfill,\hfill)$ & $T_{2,2}-1$ is in first column & Theorem~\ref{thm:hooks_plus1_intro}
\\[4mm]
\hline
strip & $\young(:::\blank\blank\blank,::\blank,::\blank,\blank\blank)$ & 
\begin{tabular}{c}  $n-1\in \Des (\jdt(-1+T))$; \\
equivalently, $n$ is strictly \\ north or weakly east of $1$ \end{tabular} & Proposition~\ref{def-strip}\\[7mm]
\hline
\begin{tabular}{c} 
two-row shape \\ $(n-k,k)$ \\ $2 \le k\le n/2$  \end{tabular}& $\young(\blank\blank\blank\blank,\blank\blank)$ & \begin{tabular}{c} $T_{2,k}=T_{2,k-1} + 1$, and\\
$T_{2,i-1} > T_{1,i} \quad (\forall 1 < i < k)$ \end{tabular} & Theorem~\ref{thm:2rows_intro} 
\\
\hline
\begin{tabular}{c} 
two-row skew shape \\ $(n-k+m,k)/(m)$ \\ $k\neq m+1$  \end{tabular}& $\young(:\blank\blank\blank,\blank\blank\blank)$ & (see Definitions~\ref{def:cDesJ} and~\ref{def:cDesP}) & \begin{tabular}{c} Theorems \ref{skew-2-row-thm_J}\\ and~\ref{skew-2-row-thm_S} \end{tabular}
\end{tabular}
\caption{Shapes for which a cyclic descent 	map is known explicitly.}
\label{tab:cDes}
\end{table}

In addition, a cyclic descent map on increasing semistandard Young tableaux of rectangular shape, which was recently introduced by Dilks, Pechenik and Striker~\cite{DPS},
can be transferred to $\SYT(k,k,1^{n-2k})$ using a descent-preserving bijection of Pechenik~\cite{Pechenik}.

\medskip

It is proved in~\cite{ARR}, in a nonconstructive fashion, that cyclic descent maps exist for all non-ribbon skew shapes. It remains an open problem to explicitly define $\cDes$ for shapes not listed above.

\begin{problem}\label{prob:cDes}
Find an explicit combinatorial definition of a cyclic descent map $\cDes$ for any non-ribbon shape $\lambda/\mu$.
\end{problem}

Whereas for certain shapes, such as hooks plus one cell, there exists a unique cyclic descent map (Theorem~\ref{thm:hooks_plus1_intro}), for other shapes, such as two-row skew shapes, different definitions of $\cDes$ are possible (Theorems~\ref{skew-2-row-thm_J}  and~\ref{skew-2-row-thm_S}).
In these cases, one may ask if one definition is more natural than the others. 

The combinatorial descriptions of $\cDes$ in the cases known so far vary significantly from shape to shape. For example, our definition for strips given in Proposition~\ref{prop:ribbons} determines whether $n$ is a cyclic descent by simply comparing the positions of $n$ and $1$ in the SYT, whereas the definitions for hooks plus one cell (Theorem~\ref{thm:hooks_plus1_intro}) and two-row shapes (Theorem~\ref{thm:2rows_intro}) do not explicitly consider the position of $n$.
Ideally, one would hope for a unified definition of $\cDes$ for all non-ribbon shapes, or at least for the straight ones.

\medskip

For some shapes, namely rectangles \cite{Rhoades}, straight shapes with a disconnected box added in the upper-right corner \cite{ER16}, strips, hooks plus one cell, and two-row straight shapes, we have been able to explicitly describe a bijection $\bij:\SYT(\lambda/\mu)\to\SYT(\lambda/\mu)$ with the property that $\cDes(\bij T)=1+\cDes(T)$ for every $T\in\SYT(\lambda/\mu)$.
In general, finding such a bijection is not immediate, even after a combinatorial definition of $\cDes$ has been found; for example, we have no explicit $\bij$ for two-row skew shapes. 

The bijection $\bij$ generates a $\bbz$-action on $\SYT(\lambda/\mu)$, which, in some of the above cases, is in fact a $\bbz_n$-action, where $n$ is the number of cells of $\lambda/\mu$. This is the case for rectangles and strips (where $\bij$ is simply the inverse promotion operator), 
for straight shapes with a disconnected box added in the upper-right corner, and for hooks plus one cell.

\begin{problem}
For non-ribbon shapes $\lambda/\mu$ for which Problem~\ref{prob:cDes} is solved,
describe an explicit bijection $\bij$ on $\SYT(\lambda/\mu)$ which cyclically shifts $\cDes$ and, ideally, generates a $\bbz_n$-action. 	
\end{problem}

Again, it would be desirable to have a unified definition of the bijection $\bij$ for all non-ribbon shapes. We point out that the descriptions of $\bij$ in the above known cases rely on variations of {\it jeu de taquin}.

\medskip

An intriguing problem is to find statistics which are equidistributed with cyclic descents on SYT of a given shape. Instances of such statistics for strips and two-row shapes are 
described in Sections~\ref{sec:words} and~\ref{sec:combinatorial2row}; see also Remark~\ref{rmk:Narayana}.

\begin{problem}
Find statistics on combinatorial objects 
which are equidistributed
with cyclic descents on SYT of given shapes. 
\end{problem}

For example, the number of SYT with $n$ cells and at most three rows is known to be equal to the $n$-th Motzkin number~\cite{Regev}. Finding statistics on Motzkin paths which are equidistributed with cyclic descent sets of SYT may help solve Problem~\ref{prob:cDes} for three-row shapes.

\medskip

We conclude with two more problems of a more basic nature, motivated by the question of $\cDes$ uniqueness. They concern the set $D(\s) = \{\card{\Des(T)} \,:\, T \in \SYT(\s)\}$ of possible descent numbers for a skew shape $\s$.

\begin{problem}
By Lemma~\ref{t.unique_cDes},
the condition $D(\s) = \{k-1,k\}$ is sufficient for the uniqueness of a cyclic descent map for $\s$. 
Is it also necessary?
\end{problem}

\begin{problem}
Is $D(\s)$ an interval, consisting of consecutive integers, for any skew shape $\s$?
\end{problem}


\end{document}